\tikzstyle{new}=[circle,  minimum width=4pt,inner sep=0pt, fill=black,draw=black]
\tikzstyle{none}=[circle,fill=white,draw=black]
\tikzstyle{n}=[shape=rectangle,minimum width=1pt,inner sep=0pt, fill=none,draw=none]
\tikzstyle{emph}=[circle,  minimum width=4pt,inner sep=0pt, fill=magenta,draw=magenta]
\tikzset{directed/.style={decoration={
  markings,
  mark=at position .6 with {\arrow{>}}},postaction={decorate}}}
\newtheorem{theorem}{Theorem}[section]
\newtheorem{lemma}[theorem]{Lemma}
\newtheorem{corollary}[theorem]{Corollary}
\newtheorem{proposition}[theorem]{Proposition}
\DeclarePairedDelimiter\ceil{\lceil}{\rceil}
\DeclarePairedDelimiter\floor{\lfloor}{\rfloor}
\newcommand\arcset{E} 
\newcommand\ka{\kappa}
\newcommand\la{\lambda}
\newcommand\cM{{\mathcal M}}
\newcommand\cW{{\mathcal W}}
\newcommand\Za{{\mathbf a}}
\newcommand\Zv{{\mathbf v}}
\newcommand\Zw{{\mathbf w}}
\newcommand\Zy{{\mathbf y}}
\newcommand\Zz{{\mathbf z}}
\newcommand\Zx{{\mathbf x}}
\newcommand\Ze{{\mathbf e}}
\newcommand\ones{{\mathbbm 1}}
\newcommand\nils{{\mathbf 0}}
\newcommand\cx{{\mathbb C}}
\newcommand\ints{{\mathbb Z}}
\newcommand\re{{\mathbb R}}
\newcommand\rats{{\mathbb Q}}
\newcommand\comp[1]{{\mkern2mu\overline{\mkern-2mu#1}}}
\newcommand\pmat[1]{\begin{pmatrix} #1 \end{pmatrix}}
\newcommand\digr{\vec{\mkern-1.5mu D}\mkern1.5mu}
\DeclareMathOperator\imag{Im}
\DeclareMathOperator\rk{rk}
\DeclareMathOperator\tr{tr}
\DeclareMathOperator\wt{wt}
\DeclareMathOperator\diag{diag}
\DeclareMathOperator\scirc{S}
\DeclareMathOperator\dist{dist}
\newcommand\indeg{d^-}
\newcommand\outdeg{d^+}
\newcommand\innhd[2]{N^-_{#1}(#2)}
\newcommand\outnhd[2]{N^+_{#1}(#2)}
\title{Hermitian adjacency matrix of digraphs and mixed graphs}
\author{Krystal Guo\thanks{Supported in part by NSERC.}\\[1mm]
   {Department of Mathematics}\\{Simon Fraser University}\\{Burnaby, B.C. V5A 1S6} \\ \texttt{krystalg@sfu.ca}
    \and
   Bojan Mohar\thanks{Supported in part by an NSERC Discovery Grant (Canada), by the Canada Research Chair program, and by the Research Grant of ARRS (Slovenia).}~~\thanks{On leave from: IMFM \& FMF, Department of Mathematics, University of Ljubljana, Ljubljana, Slovenia.} \\[1mm]
 {Department of Mathematics}\\{Simon Fraser University}\\{Burnaby, B.C. V5A 1S6}\\ \texttt{mohar@sfu.ca}
 }
\begin{document}
\maketitle

\begin{abstract}
  The paper gives a thorough introduction to spectra of digraphs via its Hermitian adjacency matrix. This matrix is indexed by the vertices of the digraph, and the entry corresponding to an arc from $x$ to $y$ is equal to the complex unity $i$ (and its symmetric entry is $-i$) if the reverse arc $yx$ is not present. We also allow arcs in both directions and unoriented edges, in which case we use $1$ as the entry. This allows to use the definition also for mixed graphs. This matrix has many nice properties; it has real eigenvalues and the interlacing theorem holds for a digraph and its induced subdigraphs. Besides covering the basic properties, we discuss many differences from the properties of eigenvalues of undirected graphs and develop basic theory. The main novel results include the following. Several surprising facts are discovered about the spectral radius; some consequences of the interlacing property are obtained; operations that preserve the spectrum are discussed -- they give rise to an incredible number of cospectral digraphs; for every $0\le\alpha\le\sqrt{3}$, all digraphs whose spectrum is contained in the interval $(-\alpha,\alpha)$ are determined.
\end{abstract}

\noindent Keywords: algebraic graph theory, eigenvalue, mixed graph, directed graph, spectral radius, cospectral

\noindent Mathematical Subject Classification: 05C50, 05C63

\section{Introduction}
\label{sec:intro}

Investigation of eigenvalues of graphs has a long history. From early applications in mathematical chemistry (see \cite{CoSi57} or Chapter 8 of \cite{CDS95}), graph eigenvalues found use in combinatorics (see \cite{Bi93, CDS95, GR, BH}), combinatorial optimization (see \cite{Lo79,MoPo93}) and most notably in theoretical computer science (see \cite{Sp12} and references therein), where it became one of the standard tools.

On the other hand, results about eigenvalues of digraphs are sparse. One reason is that it is not clear which matrix associated to a digraph $D$ would best reflect interesting combinatorial properties in its spectrum. One candidate is the \emph{adjacency matrix} $A=A(D)$ whose $(u,v)$-entry is 1 if there is an arc from the vertex $u$ to $v$, and 0 otherwise. A well-known theorem of Wilf \cite{Wilf} bounding the chromatic number in terms of its largest eigenvalue extends to this setting as shown in \cite{M10}. However, the disadvantage of this matrix is that it is not symmetric and we lose the property that eigenvalues are real. Moreover, the algebraic and geometric multiplicities of eigenvalues may be different. Another candidate is the \emph{skew-symmetric adjacency matrix} $S(D)$, where the $(u,v)$-entry is 1 if there is an arc from $u$ to $v$, and $-1$ if there is an arc from $v$ to $u$ (and $0$ otherwise). This choice is quite natural but it only works for \emph{oriented graphs} (i.e.\ when we have no digons). We refer to a survey by Cavers et al.\ \cite{CaCietal12}.

For a purpose in the Ph.D. thesis of one of the authors \cite{Guothesis}, the second author suggested to use the \emph{Hermitian adjacency matrix} $H(D)$ of a digraph instead. This matrix is Hermitian and has many of the properties that are most useful for dealing with undirected graphs. For example, there is the eigenvalue interlacing property for eigenvalues of a digraph and its induced subdigraphs (see Section \ref{sec:interlacing}). In the meantime, Liu and Li independently introduced the same matrix in \cite{LiuLi15} and used it to define and study energy of mixed graphs.

In the Hermitian adjacency matrix, the $(u,v)$-entry is the imaginary unit $i$ if there is an arc from $u$ to $v$, $-i$ if there is an arc from $v$ to $u$, $1$ if both arcs exist and $0$ otherwise. This notion extends to the setting of partially oriented graphs or \emph{mixed graphs} (see Section \ref{sec:defn} for details).

In this paper, we investigate basic properties of the Hermitian adjacency matrix, complementary to the results of Liu and Li \cite{LiuLi15}. We observe that the largest eigenvalue of the Hermitian adjacency matrix is upper-bounded by the maximum degree of the underlying graph and we characterize the case when equality is attained (see Theorem \ref{thm:rhodelta}). This is very similar to what happens with undirected graphs. On the other hand, several properties that follow from Perron-Frobenius theory are lost, and the differences that occur are discussed. For example, it may happen that the spectral radius $\rho(X)$ of a digraph $X$ is not equal to the largest eigenvalue $\lambda_1(X)$ but is equal to the absolute value of its smallest eigenvalue. To some surprise, things cannot go arbitrarily ``bad'', as we are able to prove that
$$
   \lambda_1(X) \le \rho(X)\le 3\lambda_1(X).
$$
Both of these inequalities are tight. See Theorem \ref{thm:max negative rho} and examples preceding it.

Similarly, in another contrast to the case of the adjacency matrix, there does not appear to be a bound on the diameter of  the digraph in terms of the number of distinct eigenvalues of the Hermitian adjacency matrix. In fact, there is an infinite family of strongly connected digraphs whose number of distinct eigenvalues is constant, but whose diameter goes to infinity.

Despite the many unintuitive properties that the Hermitian adjacency matrix exhibits, it is still possible to extract combinatorial structure of the digraph from its eigenvalues. In Section \ref{sec:hevalspm1}, we find all digraphs whose $H$-eigenvalues lie in the range $(-\alpha,\alpha)$ for any $0\le\alpha\le\sqrt{3}$. Using interlacing, one can find spectral bounds for the maximum independent set and maximum acyclic subgraphs for oriented graphs (see Section \ref{sec:interlacing}).

In Section \ref{sec:cospec} we discuss cospectral digraphs. As expected, cospectral pairs are not rare. Moreover, there are several kinds of ``switching'' operations that change the digraph but preserve its spectrum. In particular, the set of all digraphs obtained by orienting the edges of an arbitrary undirected graph of order $n$ usually contains up to $2^n$ non-isomorphic digraphs cospectral to each other. This kind of questions occupies Section \ref{sec:cospec}.

\section{Definitions}
\label{sec:defn}

A \emph{directed graph} (or \emph{digraph}) $X$ consists of a finite set $V(X)$ of vertices together with a subset $E(X) \subseteq V(X)\times V(X)$ of ordered pairs called \emph{arcs} or \emph{directed edges}. Instead of $(x,y) \in E(X)$, we write $xy$ for short. If $xy \in E$ and $yx \in E$, we say
that the unordered pair $\{x,y\}$ is a \emph{digon} of $X$.

A mixed graph is a graph where both directed and undirected edges may exist. More formally, a \emph{mixed graph} is an ordered triple $(V, E, A)$ where $V$ is the vertex-set, $E$ is a set of undirected edges, and $A$ is set of arcs, or directed edges. In this paper, the Hermitian adjacency matrix is defined in such a way that the undirected edges may be thought of as digons and, from this perspective, mixed graphs are equivalent to the class of digraphs that we consider here.

If a digraph $X$ has every arc contained in a digon, then we  say that $X$ is \emph{undirected} or, more simply, that $X$ is a \emph{graph.}
The \emph{underlying graph of a digraph $X$,} denoted $\Gamma(X)$, is the graph with vertex-set $V(X)$ and edge-set \[E = \{ \{x,y\} \mid xy \in \arcset(X) \text{ or } yx \in \arcset(X) \}.\] If a digraph $X$ has no digons, we say that $X$ is an \emph{oriented graph.} Given a graph $G$, the \emph{digraph} of $G$ is the digraph on the same vertex set with every undirected edge replaced by a digon. We will denote the digraph of a graph $G$ by $\digr(G)$.

The \emph{converse} of a digraph $X$ is the digraph $X^C$ with the same vertex set and arc set
$\arcset\left(X^C\right) = \{xy \mid yx \in \arcset \}$.
Observe that every digon of $X$ is unchanged under the operation of taking the converse.

For a vertex $x \in V(X)$, we define the set of \emph{in-neighbours} of $x$ as $\innhd{X}{x} = \{u \in V(X) \mid ux \in \arcset(X)\}$ and the set of \emph{out-neighbours} as $\outnhd{X}{x} = \{u \in V(X) \mid xu \in \arcset(X)\}$.
The \emph{in-degree} of $x$, denoted $\indeg(x)$, is the number of  in-neighbours of $x$. The \emph{out-degree} of  $x$, denoted $\outdeg(x)$, is the number of out-neighbours of $x$.
The maximum in-degree (resp. out-degree) of $X$ will be denoted $\Delta^-(X)$ (resp. $\Delta^+(X)$).

For a digraph $X$ with vertex set $V= V(X)$ and arc set $\arcset = \arcset(X)$, we consider the \emph{Hermitian adjacency matrix} $H = H(X) \in \cx^{V\times V}$, whose entries $H_{uv} = H(u,v)$ are given by
\[
H_{uv} = \begin{cases} ~1 & \text{if } uv \text{ and } vu \in \arcset ; \\
~i & \text{if } uv \in \arcset \text{ and } vu \notin \arcset ; \\
-i & \text{if } uv \notin \arcset \text{ and } vu \in \arcset ; \\
~0 & \text{otherwise.}\end{cases}
\]
If every edge of $X$ lies in a digon, then $H(X) = A(X)$, which reflects that $X$ is, essentially, equivalent to an undirected graph. This definition also covers mixed graphs, for which we replace each undirected edge by a digon. 

In general, we define the Hermitian adjacency matrix of a mixed multigraph $X = (V,E,A)$ without digons to be the matrix $H = H(X)$ with entries given by $H_{uv} = a + bi - c i$, where $a$ is the multiplicity of $uv$ as an undirected edge, $b$ is the multiplicity of $uv$ as an arc in $A$ and $c$ is the multiplicity of $vu$ as an arc of $A$. (Note that either $b=0$ or $c=0$ since there are no digons.) In this paper, we will mainly restrict our attention to \emph{simple} digraphs and our results will hold for simple mixed graphs when they are considered as simple digraphs obtained by replacing each undirected edge by a digon. Having said this, we will from now on speak only of digraphs.

Observe that $H$ is a Hermitian matrix and so is diagonalizable with real eigenvalues. The following proposition contains properties that are true for adjacency matrices which also carry over to the Hermitian case.

\begin{proposition}\label{prop:real}
For a digraph $X$ on $n$ vertices and $H = H(X)$ its Hermitian adjacency matrix, the following are true:
\begin{enumerate}[\rm (i)]
\item All eigenvalues of $H$ are real numbers.
\item The matrix $H$ has $n$ pairwise orthogonal eigenvectors in $\cx^n$ and so $H$ is unitarily similar to a diagonal matrix.
\end{enumerate}
\end{proposition}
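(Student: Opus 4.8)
The plan is to recognize that both statements (i) and (ii) are immediate consequences of the fundamental fact that $H$ is a \emph{Hermitian} matrix, together with the spectral theorem for such matrices. So the heart of the proof is simply to verify that $H = H(X)$ is Hermitian, i.e.\ that $H = H^*$, where $H^* = \comp{H}^{\,\top}$ denotes the conjugate transpose. This is a direct check against the defining cases of the entries $H_{uv}$: the diagonal entries are all $0$ (hence real), and for the off-diagonal entries one verifies $H_{vu} = \comp{H_{uv}}$ in each of the four cases. If $uv$ and $vu$ are both arcs, then $H_{uv} = H_{vu} = 1 = \comp{1}$; if $uv$ is an arc but $vu$ is not, then $H_{uv} = i$ while $H_{vu} = -i = \comp{i}$; the reversed case is symmetric; and if neither is an arc, both entries are $0$. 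Thus $H_{vu} = \comp{H_{uv}}$ holds universally, so $H = H^*$.

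Once $H$ is known to be Hermitian, both conclusions follow from the spectral theorem for Hermitian (or, more generally, normal) matrices over $\cx$. Concretely, I would invoke the standard result that any Hermitian matrix $H \in \cx^{n \times n}$ is unitarily diagonalizable: there exists a unitary matrix $U$ and a real diagonal matrix $\Lambda$ with $U^* H U = \Lambda$. This single statement packages both parts: the columns of $U$ form an orthonormal (hence pairwise orthogonal) basis of eigenvectors in $\cx^n$, which gives (ii); and the diagonal entries of $\Lambda$, being the eigenvalues of $H$, are real, which gives (i). Alternatively, for (i) one can give the classic self-contained argument: if $H\Zx = \la \Zx$ with $\Zx \neq \nils$, then $\la \ip{\Zx}{\Zx} = \ip{H\Zx}{\Zx} = \ip{\Zx}{H^*\Zx} = \ip{\Zx}{H\Zx} = \comp{\la}\ip{\Zx}{\Zx}$, and since $\ip{\Zx}{\Zx} > 0$ we get $\la = \comp{\la}$, so $\la \in \re$. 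For the orthogonality of eigenvectors belonging to distinct eigenvalues, a parallel computation shows $(\la - \mu)\ip{\Zx}{\Zy} = 0$, forcing $\ip{\Zx}{\Zy} = 0$ when $\la \neq \mu$.

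There is essentially no obstacle here; this proposition is foundational and everything reduces to the spectral theorem. The only point requiring any care is the verification of the Hermitian property, which is purely a matter of matching up the conjugate pairs $\{i, -i\}$ in the definition of the entries—and the design of the matrix makes this automatic. If one wants a fully self-contained argument for (ii) rather than citing unitary diagonalizability wholesale, the mildly nontrivial step is handling eigenspaces of dimension greater than one: within a single eigenspace one applies Gram--Schmidt to produce an orthonormal basis, and then combines these with the already-orthogonal vectors from distinct eigenspaces. Since the eigenspaces associated to distinct eigenvalues are mutually orthogonal and $H$ is diagonalizable (its eigenspaces span $\cx^n$), assembling these orthonormal bases yields the desired full orthonormal eigenbasis, completing (ii).
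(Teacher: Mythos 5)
Your proof is correct and takes the same route as the paper, which offers no separate argument at all: it simply observes that $H$ is Hermitian and cites the standard spectral theorem for Hermitian matrices. Your entry-by-entry verification that $H_{vu} = \comp{H_{uv}}$ and the classic computation for real eigenvalues and orthogonality fill in exactly the details the paper leaves implicit.
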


The eigenvalues of $H(X)$ are the \emph{$H$-eigenvalues of $X$} and the spectrum of $H(X)$ (i.e.\ the multiset of eigenvalues, counting their multiplicities) is the \emph{$H$-spectrum of $X$}. We will denote the $H$-spectrum by $\sigma_H(X)$ and we will express it as either a multiset of $H$-eigenvalues or a list of distinct $H$-eigenvalues with multiplicities in superscripts.

The $H$-eigenvalues of a digraph $X$ will be ordered in the decreasing order, the $j$th largest eigenvalue will be denoted by $\la_j(X) = \la_j(H)$, so that $\la_1(X)\ge \la_2(X)\ge \cdots \ge \la_n(X)$. The characteristic polynomial of $H(X)$ will be denoted by $\phi(H(X),t)$ or simply by $\phi(X,t)$.

A direct consequence of Proposition \ref{prop:real} is the min-max formula for $\la_j(X)$:
\begin{equation}
  \la_j(X) ~= \max_{\dim U=j} \min_{\Zz\in U} \Zz^*H\Zz ~= \min_{\dim U=n-j+1} \max_{\Zz\in U} \Zz^*H\Zz
\label{eq:minmax jth largest}
\end{equation}
where $H=H(X)$ and the outer maximum (minimum) is taken over all subspaces $U$ of dimension $j$ ($n-j+1$) and the inner minimum (maximum) is taken over all $\Zz$ of norm $1$.

We say that digraphs $X$ and $Y$ are \emph{$H$-cospectral} if matrices $H(X)$ and $H(Y)$ are cospectral, i.e.\ they have the same characteristic polynomials, $\phi(X,t) = \phi(Y,t)$.
Recall that digraphs $X$ and $Y$ are cospectral (or \emph{$A$-cospectral}, if we wish to distinguish between the two matrices) if $A(X)$ and $A(Y)$ have the same characteristic polynomial. To avoid ambiguity, we refer to eigenvalues and spectrum of $X$ with respect to its adjacency matrix as the \emph{$A$-eigenvalues} and \emph{$A$-spectrum,} respectively.

\section{Basic properties}\label{sec:basics}

We first examine the coefficients of the characteristic polynomial of $H(X)$.

\begin{lemma}
\label{lem:Hdegs}
For $X$ a digraph and $H = H(X)$ its Hermitian adjacency matrix,
$$
(H^2)_{uu} = d(u)
$$
where $d(u)$ is the degree of $u$ in the underlying graph of $X$.
\end{lemma}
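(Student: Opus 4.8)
The plan is to compute the diagonal entry $(H^2)_{uu}$ directly from the definition of matrix multiplication, namely
\[
(H^2)_{uu} = \sum_{v\in V(X)} H_{uv}H_{vu}.
\]
Since $H$ is Hermitian, we have $H_{vu} = \overline{H_{uv}}$, so each summand equals $H_{uv}\overline{H_{uv}} = |H_{uv}|^2$. This immediately gives the clean reformulation $(H^2)_{uu} = \sum_{v} |H_{uv}|^2$, which is manifestly a nonnegative real number, consistent with $H^2$ being positive semidefinite.

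The key step is then to observe that $|H_{uv}|^2$ acts as an indicator for adjacency in the underlying graph. First I would split into cases according to the four possibilities in the definition of $H_{uv}$. If $uv$ and $vu$ are both arcs (a digon), then $H_{uv}=1$ and $|H_{uv}|^2 = 1$. If exactly one of $uv$, $vu$ is an arc, then $H_{uv} = \pm i$ and again $|H_{uv}|^2 = 1$. If neither is an arc, then $H_{uv}=0$ and $|H_{uv}|^2 = 0$. Thus $|H_{uv}|^2 = 1$ precisely when $\{u,v\}$ is an edge of the underlying graph $\Gamma(X)$, and $|H_{uv}|^2 = 0$ otherwise. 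Note that the $v=u$ term contributes $0$ since $X$ has no loops (the adjacency entries $H_{uu}$ are $0$).

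Summing over all $v$, the count of indices $v$ with $|H_{uv}|^2 = 1$ is exactly the number of neighbours of $u$ in $\Gamma(X)$, which is by definition $d(u)$. Hence $(H^2)_{uu} = d(u)$, as claimed. I do not anticipate any genuine obstacle here: the only point requiring care is the case analysis verifying that all three nonzero edge-types (digon, forward arc, backward arc) contribute a weight of exactly $1$, which is where the specific choice of entries $1$, $i$, $-i$ — all of modulus one — is used. This is precisely the feature that makes the underlying (unoriented) degree the natural quantity recovered on the diagonal of $H^2$.
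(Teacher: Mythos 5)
Your proof is correct and follows essentially the same route as the paper: both expand $(H^2)_{uu} = \sum_v H_{uv}H_{vu}$, use the Hermitian property to write each term as $H_{uv}\overline{H_{uv}} = |H_{uv}|^2$, and observe that every nonzero entry has modulus $1$, so the sum counts neighbours of $u$ in $\Gamma(X)$. Your version merely spells out the case analysis (digon, single arc, non-edge) that the paper compresses into one line.
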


\begin{proof}
Since $H$ is Hermitian and has only entries $0$, $1$ and $\pm i$, we have
\[
H_{uv} H_{vu} = H_{uv} \comp{H_{uv}} = 1
\]
whenever $H_{uv} \neq 0$. This implies that the $(u,u)$ diagonal entry in $H^2$ is the degree of $u$ in the underlying graph of $X$.
\end{proof}

Note that the degree of a vertex $x$ of digraph $X$ in the underlying graph of $X$ is equal to $|\innhd{X}{x} \cup \outnhd{X}{x}|$. Lemma \ref{lem:Hdegs} gives the following information about the coefficient of $t^{n-2}$ in the characteristic polynomial of a digraph on $n$ vertices.

\begin{corollary}
\label{cor:cn-2}
Let $X$ be a digraph on $n$ vertices. The coefficient of $t^{n-2}$ in the characteristic polynomial $\phi(H(X),t)$, is $-e$ where $e$ is the number of edges of the underlying graph of $X$.
\end{corollary}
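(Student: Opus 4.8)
The plan is to use the standard relationship between the coefficients of the characteristic polynomial of a matrix and the traces (or equivalently the principal minors) of its powers. Writing $\phi(H(X),t) = t^n + c_1 t^{n-1} + c_2 t^{n-2} + \cdots + c_n$, the coefficient we want is $c_2$, which equals the sum of all $2\times 2$ principal minors of $H$. Equivalently, by Newton's identities, $c_2 = \tfrac{1}{2}\bigl((\tr H)^2 - \tr(H^2)\bigr)$. This reduces the whole computation to evaluating two traces.

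First I would observe that $\tr H = 0$, since every diagonal entry $H_{uu}$ is $0$ (there are no loops, so neither $uu$ nor the digon condition applies). This kills the $(\tr H)^2$ term and leaves $c_2 = -\tfrac{1}{2}\tr(H^2)$. Then I would invoke Lemma \ref{lem:Hdegs}, which gives $(H^2)_{uu} = d(u)$, the degree of $u$ in the underlying graph. Summing over all vertices yields
\[
\tr(H^2) = \sum_{u \in V(X)} d(u) = 2e,
\]
where the last equality is the handshake lemma applied to the underlying graph $\Gamma(X)$ and $e = |E(\Gamma(X))|$. Substituting back gives $c_2 = -\tfrac{1}{2}(2e) = -e$, which is exactly the claimed coefficient of $t^{n-2}$.

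I do not anticipate a genuine obstacle here, since the result follows almost immediately from the already-established Lemma \ref{lem:Hdegs} together with elementary facts about characteristic polynomials. The one point meriting a sentence of care is the identification of $c_2$ with $-\tfrac12\tr(H^2)$: I would either cite the general principal-minor expansion of the characteristic polynomial (the coefficient of $t^{n-k}$ is $(-1)^k$ times the sum of $k\times k$ principal minors) and specialize to $k=2$, or equivalently derive it from the first two Newton identities relating power sums $p_1 = \tr H$, $p_2 = \tr(H^2)$ to the elementary symmetric functions of the eigenvalues. Either route makes the final substitution transparent, and the handshake lemma handles the passage from the vertex-degree sum to the edge count $e$.
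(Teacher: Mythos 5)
Your proof is correct and follows essentially the same route as the paper: both reduce the coefficient $c_2$ to $\tfrac{1}{2}\bigl((\tr H)^2 - \tr(H^2)\bigr)$, then use $\tr H = 0$ together with Lemma \ref{lem:Hdegs} and the handshake lemma to get $\tr(H^2) = 2e$, hence $c_2 = -e$. The only cosmetic difference is that the paper derives this Newton-type identity inline from the eigenvalue factorization of $\phi(H,t)$, while you cite it (or the principal-minor expansion) as a known fact.
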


\begin{proof} Let $\Gamma$ be the underlying graph of $X$ and let $H = H(X)$. If $\lambda_1, \ldots, \lambda_n$ are the eigenvalues of $H$, the characteristic polynomial of $H$ can be written as
\[
\phi(H,t) = (t- \lambda_1) \cdots (t-\lambda_n).
\]
Thus, the coefficient of $t^{n-2}$ is
\[
c_2 = \sum_{1 \leq j < k \leq n} \lambda_j \lambda_k.
\]
Observe that
\[
\left(\sum_{j = 1}^{n} \lambda_j\right)^2 =  \sum_{j =1}^n \lambda_j^2 + 2\sum_{1 \leq j < k \leq n} \lambda_j \lambda_k = \tr(H^2) + 2c_2.
\]
The matrix $H$ has all zeros on the diagonal, and so $\sum_{j = 1}^{n} \lambda_j = 0$. Then, we obtain that
\[
   \tr(H^2) + 2c_2 = 0.
\]
By Lemma \ref{lem:Hdegs}, we have that $\tr(H^2) = \sum_{u \in V(X)} d_{\Gamma}(u) = 2e$, where $e$ is the number of edges of $\Gamma$. Thus $c_2 = -e$.
\end{proof}

Using the definition of matrix determinant as the sum of contributions over all permutations, we can write the characteristic polynomial of $H(X)$ in terms of cycles in the underlying graph as follows.
Let $X$ be a digraph on $n$ vertices. A \emph{basic subgraph of order $j$} of $X$ is a subgraph of the underlying graph $\Gamma(X)$ with $j$ vertices, each of whose components is either a single edge or a cycle in $\Gamma(X)$, and when it is a cycle, the corresponding digraph in $X$ has an even number of directed (non-digon) edges. For each basic subgraph $B$, let $c(B)$ denote the number of cycles in $B$, and let $r(B)$ be $\frac{1}{2}|f-b|$, where $f$ is the number of forward arcs and $b$ is the number of backward arcs in $B$ (for some orientation of the cycles in $B$; we are only interested in the value of $r(B)$ modulo 2, which is independent of the chosen orientations). Finally, let $s(B)$ be the number of components of $B$ with even number of vertices (i.e. edges and even cycles).

The following result, which corresponds to a well-known result of Sachs \cite{Sachs64} (see also \cite[Section 1.4]{CDS95}), appears in \cite[Theorem 2.8]{LiuLi15}.

\begin{theorem}[\cite{LiuLi15}]
\label{thm:hcp}
Let $X$ be a digraph of order $n$. Then the characteristic polynomial $\phi(X,t) = \sum_{j = 0}^{n} c_j t^j$ has coefficients equal to
$$
   c_j = \sum_{B} (-1)^{r(B)+s(B)} 2^{c(B)},
$$
where the sum runs over all basic subgraphs $B$ of order $n-j$ in $\Gamma(X)$.
\end{theorem}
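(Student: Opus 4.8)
The plan is to run the classical Sachs-coefficient argument, adapted to the complex entries of $H$. Write $\phi(X,t) = \det(tI - H)$ and recall the standard principal-minor expansion of a characteristic polynomial: the coefficient of $t^{n-k}$ equals $(-1)^k \sum_{|M|=k} \det H[M]$, where the inner sum ranges over all $k$-subsets $M \subseteq V(X)$ and $H[M]$ is the principal submatrix of $H$ indexed by $M$. Taking $k = n-j$ expresses $c_j$ through the minors $\det H[M]$ with $|M| = n-j$, so the whole problem reduces to evaluating a single such minor and identifying its value with a sum over the basic subgraphs supported on $M$.

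First I would expand $\det H[M]$ by the Leibniz formula $\det H[M] = \sum_{\sigma \in \Sym(M)} \sgn(\sigma) \prod_{u \in M} H_{u,\sigma(u)}$. Because $H$ has zero diagonal and its off-diagonal support is exactly the edge set of $\Gamma(X)$ (by the entry definition), a permutation $\sigma$ contributes a nonzero term only when it is fixed-point-free and $\{u,\sigma(u)\}$ is an edge of $\Gamma(X)$ for every $u$. Hence the cycles of $\sigma$ trace out a spanning disjoint union on $M$ of transpositions (single edges) and longer cycles of $\Gamma(X)$; this is precisely the combinatorial skeleton of a basic subgraph $B$ with $V(B) = M$, once the parity condition on cycles is imposed below.

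The heart of the argument is the per-component analysis. For a transposition $(u\,v)$ the associated factor is $H_{uv}H_{vu} = H_{uv}\comp{H_{uv}} = 1$, exactly as in Lemma \ref{lem:Hdegs}, so an edge contributes only through its permutation sign. For a cycle of length $\ell \ge 3$ there are exactly two permutation cycles realizing it, namely its two orientations, and the corresponding monomials in the entries $H_{u,\sigma(u)}$ are complex conjugates of one another. Writing the entries traversed along a fixed orientation as $1$ (digon), $i$ (forward arc) or $-i$ (backward arc), the monomial equals $(-1)^{b}\, i^{\,f+b}$, where $f$ and $b$ count forward and backward arcs. When $f+b$ is odd this quantity is purely imaginary and the two conjugate orientations cancel; this is exactly why only cycles carrying an even number of directed edges survive, which is the admissibility built into the definition of a basic subgraph. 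When $f+b$ is even the monomial is real: using $i^{f+b} = (-1)^{(f+b)/2}$ one checks $(-1)^{b}\,i^{f+b} = (-1)^{r(B)}$ on that component, and the two equal orientations add, producing a factor $2$ per cycle and hence the overall $2^{c(B)}$.

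Finally I would assemble the signs, which is the delicate step. Each permutation cycle of length $\ell$ carries $\sgn = (-1)^{\ell-1}$, and since $\ell-1$ is odd precisely when $\ell$ is even, the product of these over all components (edges included) equals $(-1)^{s(B)}$, where $s(B)$ records the even components. Collecting this with the real factors $(-1)^{r(B)}$ and the cycle factors $2^{c(B)}$ gives the clean per-minor identity $\det H[M] = \sum_{B:\,V(B)=M} (-1)^{r(B)+s(B)}2^{c(B)}$; summing over all $M$ of size $n-j$ and feeding the result back into the principal-minor expansion of $\phi(X,t)$ yields the stated coefficients. The main obstacle is precisely this bookkeeping: one must simultaneously track the sign of each cyclic permutation, the conjugate-pair (real-versus-imaginary) behaviour that forces the even-arc condition, the doubling from the two orientations, and the prefactor from the coefficient expansion, and verify that all the parities recombine into the single exponent $r(B)+s(B)$. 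The low-order cases $k=1$ and $k=2$, which must reproduce $\tr H = 0$ and Corollary \ref{cor:cn-2}, provide convenient consistency checks along the way.
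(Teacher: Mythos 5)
Your route --- principal-minor expansion of $\det(tI-H)$, Leibniz expansion of each minor, pairing the two orientations of each permutation cycle, cancellation of the cycles carrying an odd number of non-digon arcs --- is exactly the Sachs-type argument that the paper alludes to in the sentence preceding the theorem (the paper itself gives no proof, deferring to Liu and Li), and everything up to and including your per-minor identity $\det H[M] = \sum_{B:\,V(B)=M}(-1)^{r(B)+s(B)}2^{c(B)}$ is correct. The gap is the final assembly step, where you assert that summing over $M$ ``yields the stated coefficients'': you have silently dropped the prefactor $(-1)^k$ that you yourself introduced in the expansion $c_{n-k} = (-1)^k\sum_{|M|=k}\det H[M]$. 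Carrying it along, your argument actually proves
$$
   c_j \;=\; (-1)^{n-j}\sum_{B}(-1)^{r(B)+s(B)}2^{c(B)} \;=\; \sum_{B}(-1)^{r(B)+p(B)}2^{c(B)},
$$
where $p(B)$ denotes the \emph{total} number of components of $B$. The second equality holds because $|B|+s(B)\equiv p(B) \pmod 2$: each odd-order component contributes the odd summand $\ell_i$ to $|B|+s(B)$, and each even-order component contributes the odd summand $\ell_i+1$. So the parities do recombine cleanly --- but into the exponent $r(B)+p(B)$, not $r(B)+s(B)$; the two agree precisely when $n-j$ is even.

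This is not a cosmetic slip, because the formula with $s(B)$ is genuinely false for basic subgraphs of odd order, so no correct bookkeeping can arrive at it. Take $X=K_3$ (all digons): $\phi(K_3,t)=t^3-3t-2$, so $c_0=-2$, while the unique basic subgraph of order $3$ has $r=0$, $s=0$, $c=1$, giving $+2$ under the exponent $r+s$; the exponent $r+p$ gives $-2$, as required --- indeed, when every edge is a digon the theorem must reduce to the classical Sachs formula, whose sign is $(-1)^{p(B)}$, the number of components. Note that the consistency checks you proposed cannot detect this: there are no basic subgraphs of order $1$, and basic subgraphs of order $2$ are single edges, for which $s=p=1$. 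The first distinguishing case is order $3$ (triangles), where Proposition \ref{prop:traceh3}(iii) together with Newton's identities ($c_{n-3}=-\tfrac13\tr H^3$ since $\tr H=0$) confirms the component-count version. In short: your derivation, completed honestly, proves the theorem with $s(B)$ replaced by $p(B)$; the statement as printed carries the same sign discrepancy that your last sentence glosses over, and your proof lands on it only by virtue of the dropped factor $(-1)^{n-j}$.
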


In particular, the expression for $c_{n-2}$ from Theorem \ref{thm:hcp} yields the result in Corollary \ref{cor:cn-2} since all basic graphs of order 2 are single edges. Using this theorem, we also obtain the following corollary, which has also been used in \cite{LiuLi15}.

\begin{corollary}[\cite{LiuLi15}]
\label{cor:cutedge}
Suppose that $\{u,v\}$ is a digon in $X$. If $uv$ is a cut-edge of\/ $\Gamma(X)$, then the spectrum of $H(X)$ is unchanged when the digon is replaced with a single arc $uv$ or $vu$.
\end{corollary}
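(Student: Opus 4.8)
The plan is to read off both characteristic polynomials from the Sachs-type formula of Theorem~\ref{thm:hcp} and check that, coefficient by coefficient, nothing changes. Let $X'$ denote the digraph obtained from $X$ by deleting the arc $vu$, so that the digon $\{u,v\}$ is replaced by the single arc $uv$; the case in which we keep $vu$ instead is identical by symmetry. The first and most important observation is that replacing the digon by a single arc does not alter the underlying graph: the edge $\{u,v\}$ is still present in $\Gamma(X')$, so $\Gamma(X') = \Gamma(X)$. Hence $X$ and $X'$ have exactly the same collection of subgraphs of their common underlying graph to feed into the formula.

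Next I would invoke the hypothesis that $uv$ is a cut-edge. Since a cut-edge lies on no cycle of $\Gamma(X)$, the edge $\{u,v\}$ can occur in a basic subgraph $B$ only as a single-edge component, never inside a cycle component. This is the key structural fact, because all of the orientation-dependent data in Theorem~\ref{thm:hcp} is carried by the cycle components: whether a subgraph is \emph{basic} depends on the orientations only through the requirement that each cycle have an even number of non-digon arcs; $r(B)$ is computed from forward/backward counts along chosen orientations of the cycles; $c(B)$ counts cycles; and $s(B)$ counts even-order components. None of these quantities sees the edge $\{u,v\}$ except as a single-edge (hence even-order) component, and it remains such a component in both $X$ and $X'$.

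Putting these together, I claim that a subgraph $B \subseteq \Gamma(X) = \Gamma(X')$ is a basic subgraph of $X$ if and only if it is a basic subgraph of $X'$, and that in this case $c(B)$, $r(B)$, and $s(B)$ take the same values whether computed in $X$ or in $X'$. Indeed, the only difference between the two digraphs is the orientation status of the bridge $\{u,v\}$ (a digon versus a single arc), and by the previous paragraph this edge influences none of the cycle components on which these parameters depend; every other edge retains its orientation, so every cycle that could appear in some $B$ carries identical data in $X$ and $X'$. Feeding this into Theorem~\ref{thm:hcp} term by term yields $c_j(X) = c_j(X')$ for every $j$, whence $\phi(X,t) = \phi(X',t)$ and therefore $\sigma_H(X) = \sigma_H(X')$.

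The one point that requires care, and which I regard as the main obstacle, is making precise that the single-edge contribution of $\{u,v\}$ really is orientation-blind. At the level of the formula this is the statement that a single-edge component is counted identically in $X$ and $X'$; at the level of the underlying determinant expansion behind Theorem~\ref{thm:hcp} it reduces to the computation $H_{uv}H_{vu} = 1$, which holds both for a digon ($1\cdot 1$) and for a single arc ($i\cdot(-i)$). Thus a single-edge component always contributes the same factor regardless of whether the edge is a digon or an oriented arc, and since the bridge $\{u,v\}$ enters the formula only through such components, every coefficient—and hence the whole spectrum—is preserved.
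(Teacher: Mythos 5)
Your proposal is correct and follows essentially the same route as the paper: the paper derives this corollary directly from the Sachs-type coefficient formula of Theorem~\ref{thm:hcp}, observing (implicitly, with details left to the reader) that a cut-edge lies on no cycle of $\Gamma(X)$ and hence can appear in basic subgraphs only as a single-edge component, whose contribution is orientation-independent. Your write-up simply makes explicit the details the paper omits, including the key computation $H_{uv}H_{vu}=1$ for both a digon and a single arc.
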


Analogous to the results for the adjacency matrix found in standard texts like \cite{Bi93}, we may write the $(u,v)$-entry of $H(X)^k$ as a weighted sum of the walks in $\Gamma(X)$ of length $k$ from $u$ and $v$.

\begin{proposition}
\label{prop:hpath}
Let $X$ be a digraph and $H=H(X)$. For vertices $u,v \in V(X)$ and any positive integer $k$, the $(u,v)$-entry of the $k$th power of $H$ can be expressed is as follows:
\[
\left(H^k\right)_{uv} = \sum_{W \in \cW} \wt(W)
\]
where $\cW$ is the set of all walks of length $k$ from $u$ to $v$ in $\Gamma(X)$ and for $W = (v_0, \ldots, v_k) \in \cW$, where $v_0 = u$ and $v_k = v$, the \emph{weight} is
\[
\wt(W) = \prod_{j = 0}^{k-1} H(v_j,v_{j+1}).
\]
\end{proposition}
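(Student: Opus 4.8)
The plan is to prove the formula by induction on $k$, following the standard argument that expresses powers of an adjacency-type matrix as weighted walk sums. The statement is essentially a bookkeeping identity: matrix multiplication aggregates contributions along intermediate vertices, and walks are exactly the combinatorial objects that record such intermediate vertices. The only feature that distinguishes the Hermitian case from the classical one is that the weights $H(v_j, v_{j+1})$ are complex numbers (possibly $i$ or $-i$) rather than $0/1$ entries, so I must be careful that multiplication respects the order in which arcs are traversed. Crucially, $H(u,v)$ and $H(v,u)$ are complex conjugates rather than equal, so the weight of a walk genuinely depends on the direction each edge is traversed.

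First I would establish the base case $k=1$: by definition $\wt(W) = H(v_0, v_1) = H(u,v)$ for the single length-one walk $W = (u,v)$, and the set $\cW$ is empty unless $\{u,v\}$ is an edge of $\Gamma(X)$, so $\sum_{W} \wt(W) = H_{uv} = (H^1)_{uv}$, with the convention that an empty sum is $0$ matching the fact that $H_{uv}=0$ when $u,v$ are non-adjacent. For the inductive step, assume the formula holds for $k$ and compute
\[
(H^{k+1})_{uv} = \sum_{w \in V(X)} (H^k)_{uw} \, H_{wv}
= \sum_{w \in V(X)} \Bigl( \sum_{W' \in \cW_{uw}^{(k)}} \wt(W') \Bigr) H(w,v),
\]
where $\cW_{uw}^{(k)}$ is the set of length-$k$ walks from $u$ to $w$ in $\Gamma(X)$. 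The key observation is that every length-$(k+1)$ walk $W = (v_0,\ldots,v_{k+1})$ from $u$ to $v$ decomposes uniquely as a length-$k$ walk $W' = (v_0,\ldots,v_k)$ from $u$ to $w := v_k$, followed by the final step $wv$; and $\wt(W) = \wt(W') \cdot H(w,v)$ directly from the definition of the weight as a product over consecutive pairs. Summing over all $w$ and all such $W'$ therefore reproduces the sum over all $W \in \cW_{uv}^{(k+1)}$, which completes the induction.

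I expect no serious obstacle here, since the argument is a direct unpacking of definitions. The one point requiring mild care is making sure the decomposition of a walk into its prefix and last edge is a genuine bijection onto pairs $(W', wv)$ with $w$ ranging over all vertices adjacent to $v$ in $\Gamma(X)$; this is immediate because a walk is by definition a sequence of vertices with each consecutive pair adjacent, and the correspondence $W \leftrightarrow (W', w)$ with $w = v_k$ is manifestly invertible. A secondary point to state explicitly is that terms with $H(w,v) = 0$ contribute nothing, so the outer sum over $w \in V(X)$ is effectively a sum over neighbours of $v$, matching the requirement that $W$ be a walk in $\Gamma(X)$. Because the weight is defined as an ordered product and the noncommutativity of the underlying structure never enters (the weights are scalars in $\cx$), the proof goes through verbatim as in the undirected case, the only substantive difference being that $\wt(W)$ is now a product of complex numbers.
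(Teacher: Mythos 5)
Your proof is correct. Note that the paper does not actually prove this proposition: it states it as the direct analogue of the classical walk-counting result for adjacency matrices (citing standard texts), and your induction on $k$ — base case from the definition of $H$, inductive step via $(H^{k+1})_{uv} = \sum_{w} (H^k)_{uw} H_{wv}$ and the prefix-plus-last-edge bijection on walks — is precisely the standard argument the authors are implicitly invoking, with the right attention paid to the only new feature, namely that $H(u,v)$ and $H(v,u)$ are conjugates so the weight depends on traversal direction.
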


We will use Proposition \ref{prop:hpath} to find an expression for $\tr(H(X)^3)$ in terms of numbers of sub-digraphs isomorphic to certain types of triangles.

\begin{proposition}
\label{prop:traceh3}
Let\/ $X$ be a digraph of order $n$ and $\lambda_1,\dots,\lambda_n$ be its $H$-eigenvalues. Then:
\begin{enumerate}[\rm (i)]
\item
$\sum_{j=1}^n \lambda_j = 0$.
\item
$\sum_{j=1}^n \lambda_j^2 = 2e$, where $e$ is the number of edges of $\Gamma(X)$.
\item
$\sum_{j=1}^n \lambda_j^3 = 6(x_2 + x_3 + x_4 - x_1)$,
where $x_j$ is the number of copies of $X_j$ as an induced sub-digraph of $X$ and $X_j$ $(1\le j\le 4)$ are the digraphs shown in Figure~\ref{fig:c3str3}.
\end{enumerate}
\end{proposition}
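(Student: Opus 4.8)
The first two parts are quick consequences of the trace. Since $H$ has zero diagonal, $\sum_{j}\lambda_j = \tr(H) = 0$, which is (i). For (ii) I would use $\sum_j \lambda_j^2 = \tr(H^2) = \sum_{u}(H^2)_{uu}$ and apply Lemma~\ref{lem:Hdegs}, which identifies each diagonal entry $(H^2)_{uu}$ with the degree $d(u)$ in $\Gamma(X)$; summing and invoking the handshake identity $\sum_u d(u)=2e$ gives $\tr(H^2)=2e$.

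The real content is (iii). I would begin from $\sum_j \lambda_j^3 = \tr(H^3) = \sum_u (H^3)_{uu}$ and apply Proposition~\ref{prop:hpath} to express each diagonal entry as a weighted sum over closed walks of length $3$ based at $u$. Since $\Gamma(X)$ is simple, a closed walk of length $3$ must visit three distinct vertices forming a triangle, so the whole double sum reorganizes as a sum over triangles $T$ of $\Gamma(X)$. Each triangle on vertices $\{a,b,c\}$ supports exactly six closed walks of length $3$: three choices of base vertex times two directions of traversal. All three walks that go ``the same way'' share the common weight $w(T)=H(a,b)H(b,c)H(c,a)$ (the three factors are merely cyclically permuted), and because $H$ is Hermitian, reversing a walk conjugates its weight. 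Hence the six walks contribute $3w(T)+3\overline{w(T)} = 6\,\rea(w(T))$, and so $\tr(H^3) = 6\sum_{T}\rea(w(T))$, summed over all triangles $T$ of $\Gamma(X)$.

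It then remains to evaluate $\rea(w(T))$ from the orientation pattern of $T$ inside $X$. Each edge of $T$ is either a digon, contributing a factor $1$, or a single arc, contributing $\pm i$ according to whether its direction agrees with a fixed cyclic orientation of $T$. If $T$ has an odd number of arcs (one or three), then $w(T)$ is purely imaginary and $\rea(w(T))=0$; in particular \emph{every} oriented triangle contributes nothing. This leaves triangles with $0$ or $2$ arcs, and a short case check gives: the all-digon triangle has $w(T)=1$; a triangle with one digon and two arcs forming a source or a sink at the vertex missing from the digon has $w(T)=1$; and a triangle with one digon and two arcs forming a directed $2$-path has $w(T)=-1$. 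These are exactly four configurations, which I would match with $X_1,\dots,X_4$ of Figure~\ref{fig:c3str3} (the directed-path type being $X_1$, and the three $+1$ types being $X_2,X_3,X_4$), yielding $\tr(H^3)=6(x_2+x_3+x_4-x_1)$.

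The only delicate point is the bookkeeping in the last paragraph. One must check that the six walks around a triangle really collapse to $6\,\rea(w(T))$ with correct handling of base vertex and direction, and — crucially — that the value of $\rea(w(T))$ does not depend on the arbitrary cyclic orientation chosen for $T$; this is precisely why only the \emph{parity} of the number of arcs, rather than any sign convention, governs whether a triangle contributes. Once the three $+1$ configurations and the single $-1$ configuration are listed, identifying which of $X_2,X_3,X_4$ is the all-digon, source, and sink type against the figure is routine.
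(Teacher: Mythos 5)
Your proof is correct and follows essentially the same route as the paper's: both reduce (iii) to $\tr(H^3)$ via Proposition~\ref{prop:hpath}, organize the closed walks of length $3$ by triangles, discard triangles with an odd number of arcs because their purely imaginary weights cancel, and verify that the directed-path type $X_1$ has weight $-1$ while $X_2,X_3,X_4$ have weight $+1$. Your write-up is in fact more explicit than the paper's terse argument (notably the $3w(T)+3\overline{w(T)}=6\,\rea(w(T))$ bookkeeping and the orientation-independence remark), but the underlying idea is identical.
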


\begin{proof}
The first two statements are easy consequences of the facts that $\tr(H) = 0$ and $\tr(H^2) = 2e$.
Below we provide details for (iii).

Let $X$ be a digraph with $V = V(X)$ and $\arcset = \arcset(X)$. We apply Proposition \ref{prop:hpath} to obtain:
\[
\tr(H^3) = \sum_{v \in V} (H^3)_{v,v} = \sum_{v\in V} \sum_{W \in \cW_v} \wt(W)
\]
where $W_v$ is the set of closed walks at $v$ of length $3$ in $\Gamma(X)$ and $\wt$ as defined in Proposition \ref{prop:hpath}. Every closed walk of length $3$ has $C_3$ as its underlying graph. For simplicity, we will refer to any digraph with $C_3$ as the underlying graph as a \emph{triangle} and we will call a sub-digraph of $X$ isomorphic to a triangle a \emph{triangle of $X$.} In Figure \ref{fig:c3str3}, we list all non-isomorphic triangles with an even number of arcs (non-digon edges). Those with an odd number make contributions to $\tr(H)$ that cancel each other out. Finally, note that the weight of $X_1$ is $-1$, while the weight of the others is 1.
\end{proof}

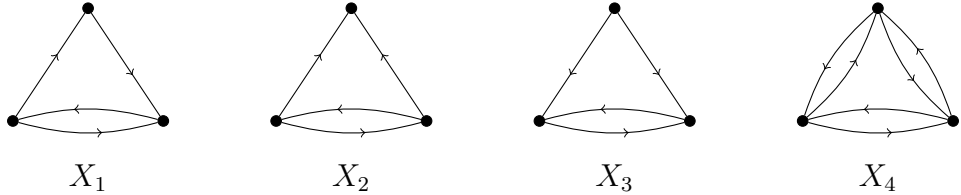
\begin{figure}[ht]
\centering
\begin{tikzpicture}
	\begin{pgfonlayer}{nodelayer}
		\node [style=new] (0) at (-5, -1.75) {};
		\node [style=new] (1) at (-3, -1.75) {};
		\node [style=new] (2) at (-4, -0.25) {};
		\node [style=new] (3) at (-1.5, -1.75) {};
		\node [style=new] (4) at (0.5, -1.75) {};
		\node [style=new] (5) at (-0.5, -0.25) {};
		\node [style=new] (6) at (2, -1.75) {};
		\node [style=new] (7) at (4, -1.75) {};
		\node [style=new] (8) at (3, -0.25) {};
		\node [style=new] (9) at (5.5, -1.75) {};
		\node [style=new] (10) at (7.5, -1.75) {};
		\node [style=new] (11) at (6.5, -0.25) {};
		\node [style=n] (12) at (-4, -2.5) {$X_1$};
		\node [style=n] (13) at (-0.5, -2.5) {$X_2$};
		\node [style=n] (14) at (3, -2.5) {$X_3$};
		\node [style=n] (15) at (6.5, -2.5) {$X_4$};
	\end{pgfonlayer}
	\begin{pgfonlayer}{edgelayer}
		\draw [directed] (2) to (1);
		\draw [directed, bend right=15] (1) to (0);
		\draw [directed] (0) to (2);
		\draw [directed, bend right=15] (4) to (3);
		\draw [directed] (3) to (5);
		\draw [directed] (8) to (7);
		\draw [directed, bend right=15] (11) to (10);
		\draw [directed, bend right=15] (10) to (9);
		\draw [directed, bend right=15] (9) to (11);
		\draw [directed, bend right=15] (0) to (1);
		\draw [directed, bend right=15] (3) to (4);
		\draw [directed] (4) to (5);
		\draw [directed, bend right=15] (7) to (6);
		\draw [directed] (8) to (6);
		\draw [directed, bend right=15] (6) to (7);
		\draw [directed, bend right=15] (11) to (9);
		\draw [directed, bend right=15] (9) to (10);
		\draw [directed, bend right=15] (10) to (11);
	\end{pgfonlayer}
\end{tikzpicture}
\caption{All non-isomorphic digraphs with even number of arcs whose underlying graph is the $3$-cycle.\label{fig:c3str3}}
\end{figure}

We define $G(X)$, the \emph{symmetric subgraph of a digraph $X$,} to be the graph with vertex set $V(X)$ and the edge set being the set consisting of all digons of $X$. Similarly, we define $D(X)$, the \emph{asymmetric sub-digraph of $X$}, to be the digraph with vertex set $V(X)$ and the arc set being the set of arcs of $X$ which are not contained in any digons of $X$.

\begin{lemma}
Let $X$ be a digraph. Then $H(X)$ has the all-ones vector $\ones$ as an eigenvector if and only if $G(X)$ is regular and $D(X)$ is eulerian.
\end{lemma}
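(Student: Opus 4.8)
The plan is to examine the row of $H(X)$ corresponding to each vertex $u$ and compute $(H\ones)_u$, which is simply the sum of the entries in row $u$. Writing $H = H(X)$, the entry $H_{uv}$ is $1$ precisely on digons, $i$ on arcs $uv$ with $vu \notin \arcset$, and $-i$ on the reverse arcs. Collecting terms, I would first observe that
\[
(H\ones)_u = \sum_{v} H_{uv} = d_G(u) + i\bigl(\outdeg_D(u) - \indeg_D(u)\bigr),
\]
where $d_G(u)$ counts the digons at $u$ (the degree of $u$ in the symmetric subgraph $G(X)$) and $\outdeg_D(u),\indeg_D(u)$ are the out- and in-degrees of $u$ in the asymmetric sub-digraph $D(X)$. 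The vector $\ones$ is an eigenvector if and only if there is a common real constant $\lambda$ with $(H\ones)_u = \lambda$ for every $u$; note $\lambda$ is automatically real since $H$ is Hermitian with real eigenvalues.

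The key step is to separate the real and imaginary parts of this condition. The imaginary part gives $\outdeg_D(u) = \indeg_D(u)$ for every vertex $u$, which is exactly the in-degree–out-degree balance condition characterizing an \emph{eulerian} digraph $D(X)$ (each connected component has all in-degrees equal to out-degrees); here I would invoke the standard fact that a digraph is eulerian iff it is balanced at every vertex, being slightly careful about the convention for eulerian when the digraph is disconnected, using the degree-balance formulation as the working definition. The real part gives $d_G(u) = \lambda$ for every $u$, which says the symmetric subgraph $G(X)$ is $\lambda$-regular.

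For the two directions: the forward direction extracts regularity and balance from the assumption that $\ones$ is an eigenvector, by reading off real and imaginary parts as above. The converse direction is the easy one — if $G(X)$ is $r$-regular and $D(X)$ is balanced, then $(H\ones)_u = r + i\cdot 0 = r$ for all $u$, so $H\ones = r\ones$ and $\ones$ is an eigenvector with eigenvalue $r$.

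I expect the main obstacle to be purely a matter of correct bookkeeping rather than genuine difficulty: one must take care that the eigenvalue $\lambda$ is a single constant shared across all vertices, so that the regularity condition (constant $d_G(u)$) and the balance condition (vanishing imaginary part, independent of the value of $\lambda$) are correctly decoupled. The one subtlety worth flagging is the precise meaning of ``eulerian'' for a possibly disconnected $D(X)$; I would state explicitly that I am using the degree-balance characterization, so that the imaginary-part computation matches the claimed conclusion cleanly.
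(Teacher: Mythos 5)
Your proposal is correct and follows essentially the same argument as the paper: both directions come down to computing the row sum $(H\ones)_u = d_G(u) + i\bigl(\outdeg_D(u)-\indeg_D(u)\bigr)$ and separating real and imaginary parts, exactly as in the paper's proof (which phrases the easy direction via the decomposition $H(X)=H(G(X))+H(D(X))$, a cosmetic difference only). Your remark about using the degree-balance formulation of ``eulerian'' for possibly disconnected $D(X)$ matches the convention the paper implicitly adopts.
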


\begin{proof}
Suppose $D(X)$ is eulerian, then the sum along any row of $H(D(X))$ is equal to 0 and so $\ones$ is an eigenvector for $H(D(X))$ of eigenvalue 0. If $G(X)$ is regular, then $\ones$ is an eigenvector of $H(G(X)) = A(G(X))$ with eigenvalue equal to the valency of $G(X)$, say $k$. Then
\[
H(X) \ones = H(D(X))\ones + H(G(X)) \ones = k \ones.
\]

For the other direction, suppose that $H(X) \ones = \gamma \ones$ for some $\gamma \in \re$.  The row sum along the row indexed by $x$ is equal  to $d + (s - t)i$ where $d = d(x)$, $s= \indeg(x)$, and $t = \outdeg(x)$. Then, since $\gamma$ is a real number, we must have that $s =t$ and $d = \gamma$.
\end{proof}

\section{Interlacing}
\label{sec:interlacing}

Suppose that $\la_1\ge \la_2\ge \cdots \ge \la_n$ and $\ka_1\ge \ka_2\ge \cdots \ge \ka_{n-t}$ (where $t\ge1$ is an integer) be two sequences of real numbers. We say that the sequences $\la_l$ ($1\le l\le n$) and $\ka_j$ ($1\le j\le n-t$) \emph{interlace} if for every $s=1,\dots,n-t$, we have
$$
    \la_s\ge \ka_s\ge \la_{s+t}.
$$

The usual version of the eigenvalue interlacing property states that the eigenvalues of any principal submatrix of a Hermitian matrix interlace those of the whole matrix (see \cite[Theorems 4.3.8 and 4.3.15]{HoJo13}).

\begin{theorem}
\label{thm:interlacingGR}
If $H$ is a Hermitian matrix and $B$ is a principal submatrix of $H$, then the eigenvalues of $B$ interlace those of $H$.
\end{theorem}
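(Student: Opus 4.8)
The plan is to derive both interlacing inequalities directly from the two forms of the Courant--Fischer min-max formula \eqref{eq:minmax jth largest}, which is available for any Hermitian matrix by Proposition \ref{prop:real}. Write $\la_1\ge\cdots\ge\la_n$ for the eigenvalues of $H$ and $\ka_1\ge\cdots\ge\ka_m$ for those of $B$, where $m=n-t$. After a simultaneous permutation of the rows and columns (which changes neither spectrum), I may assume that $B$ is the leading principal submatrix of $H$, occupying the first $m$ coordinates. Let $S\subseteq\cx^n$ be the $m$-dimensional coordinate subspace of all vectors whose last $t$ entries vanish, and identify each $\Zw\in\cx^m$ with the padded vector $\Zz\in S$. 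The crucial observation is that this identification preserves both norm and quadratic form: $\Zz^*H\Zz=\Zw^*B\Zw$ and $\|\Zz\|=\|\Zw\|$, since the deleted rows and columns contribute nothing once the last $t$ coordinates are zero. Throughout, the inner extrema are taken over unit vectors, exactly as in \eqref{eq:minmax jth largest}.

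For the upper bound $\la_s\ge\ka_s$ I would use the max-min form. Let $U\subseteq\cx^m$ be an $s$-dimensional subspace attaining the maximum in $\ka_s=\max_{\dim U=s}\min_{\Zw\in U}\Zw^*B\Zw$, and let $\widetilde U\subseteq S$ be its image under the identification, again of dimension $s$. Since $\widetilde U$ is one admissible subspace in the outer maximum defining $\la_s$,
\[
\la_s\ \ge\ \min_{\Zz\in\widetilde U}\Zz^*H\Zz\ =\ \min_{\Zw\in U}\Zw^*B\Zw\ =\ \ka_s,
\]
the middle equality being precisely the norm-and-form-preserving property above.

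For the lower bound $\ka_s\ge\la_{s+t}$ I would instead use the min-max form, and here the only point requiring care is matching dimensions. Choose $U\subseteq\cx^m$ of dimension $m-s+1$ attaining the minimum in $\ka_s=\min_{\dim U=m-s+1}\max_{\Zw\in U}\Zw^*B\Zw$, and let $\widetilde U\subseteq S$ be its image, of the same dimension. The key bookkeeping identity is $m-s+1=(n-t)-s+1=n-(s+t)+1$, so $\widetilde U$ is an admissible subspace in the outer minimum of the min-max formula for $\la_{s+t}$. Hence
\[
\la_{s+t}\ \le\ \max_{\Zz\in\widetilde U}\Zz^*H\Zz\ =\ \max_{\Zw\in U}\Zw^*B\Zw\ =\ \ka_s.
\]
Combining the two displays yields $\la_s\ge\ka_s\ge\la_{s+t}$ for every $s=1,\dots,n-t$, which is exactly the interlacing assertion.

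The argument is essentially bookkeeping once the min-max formula is in hand, so I do not anticipate a genuine obstacle; the one spot demanding attention is aligning the subspace dimensions in the second step (verifying $m-s+1=n-(s+t)+1$) and checking that the optimal subspace $U$ for $B$ legitimately serves as a \emph{test} subspace for the full matrix $H$ in each inequality. The reduction to a leading principal submatrix via a permutation, together with the norm-preserving identification of $\cx^m$ with the coordinate subspace $S$, are the conceptual moves that render the calculation transparent.
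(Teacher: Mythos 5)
Your proof is correct, and there is nothing in the paper to compare it against: the paper states this theorem without proof, citing it as the classical Cauchy interlacing result from Horn and Johnson \cite{HoJo13}. Your Courant--Fischer argument is precisely the standard textbook proof of that result, and it is carried out cleanly here --- the norm- and form-preserving identification of $\cx^m$ with the coordinate subspace $S$, the use of the optimal subspace for $B$ as a test subspace for $H$ in each of the two extremal formulas, and the dimension count $m-s+1 = n-(s+t)+1$ are all exactly right, so your write-up in fact makes the paper's Theorem \ref{thm:interlacingGR} self-contained using only the min-max formula \eqref{eq:minmax jth largest} already stated there.
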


Interlacing of eigenvalues is a powerful tool in algebraic graph theory.
Theorem \ref{thm:interlacingGR} implies that the eigenvalues of any induced subdigraph interlace those of the digraph itself.

\begin{corollary}
\label{cor:interlacing digraph}
The eigenvalues of an induced subdigraph interlace the eigenvalues of the digraph.
\end{corollary}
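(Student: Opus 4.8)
The plan is to show that any induced subdigraph $Y$ of $X$ has a Hermitian adjacency matrix $H(Y)$ that is a principal submatrix of $H(X)$, and then invoke Theorem \ref{thm:interlacingGR} directly. The key observation is purely bookkeeping about the definition of $H$: for a vertex subset $S \subseteq V(X)$, the induced subdigraph $X[S]$ has vertex set $S$ and retains exactly those arcs of $X$ with both endpoints in $S$. First I would fix an ordering of $V(X)$ in which the vertices of $S$ come first, so that $H(X)$ is written in block form with the top-left block indexed by $S \times S$.

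Next I would verify that this top-left block equals $H(X[S])$ entry by entry. For $u,v \in S$, the $(u,v)$-entry of $H(X)$ depends only on whether $uv$ and $vu$ are arcs of $X$; since both $u$ and $v$ lie in $S$, the arc $uv$ belongs to $X$ if and only if it belongs to $X[S]$, and likewise for $vu$. Hence each of the four cases in the definition of $H_{uv}$ (namely $1$, $i$, $-i$, or $0$) is decided identically whether we compute in $X$ or in $X[S]$. Therefore the $S\times S$ principal submatrix of $H(X)$ is exactly $H(X[S])$, which is itself Hermitian.

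With this identification in hand, the corollary is immediate: $H(X[S])$ is a principal submatrix of the Hermitian matrix $H(X)$, so by Theorem \ref{thm:interlacingGR} its eigenvalues interlace those of $H(X)$. Writing $t = |V(X)| - |S|$, this says precisely that the $H$-eigenvalues of the induced subdigraph interlace those of $X$ in the sense defined at the start of Section \ref{sec:interlacing}.

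I do not expect any genuine obstacle here, since the statement is essentially a translation of the matrix-level interlacing theorem into the language of digraphs. The only point requiring care is the entrywise agreement of $H(X[S])$ with the principal submatrix of $H(X)$, which hinges on the fact that inducing on $S$ neither creates nor destroys arcs between vertices of $S$; this is exactly the defining property of an induced subdigraph, so the verification is routine.
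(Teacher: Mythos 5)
Your proof is correct and follows exactly the route the paper takes: the paper derives this corollary by noting that $H$ of an induced subdigraph is a principal submatrix of $H(X)$ and invoking Theorem \ref{thm:interlacingGR}, which is precisely your argument, with the entrywise verification spelled out in more detail than the paper bothers to give.
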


To see a simple example how useful the interlacing theorem is, let us consider the following notion.
Let $\eta^+(X)$ denote the number of non-negative $H$-eigenvalues of a digraph $X$ and $\eta^{-}(X)$ denote the number of non-positive $H$-eigenvalues of $X$.

\begin{theorem}
If a digraph $X$ contains a subset of $m$ vertices, no two of which form a digon, then
$\eta^+(X) \geq \ceil*{ \frac{m}{2} }$
and $\eta^{-}(X) \geq \ceil*{\frac{m}{2}}$.
\end{theorem}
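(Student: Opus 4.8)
The plan is to pass to the induced subdigraph $Y = X[S]$ on the $m$ vertices of the given set $S$, establish a spectral symmetry of $Y$ that forces it to have at least $\ceil*{m/2}$ non-negative and at least $\ceil*{m/2}$ non-positive $H$-eigenvalues, and then transport both bounds up to $X$ using the interlacing of Corollary~\ref{cor:interlacing digraph}.

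First I would record the key structural observation. Since no two vertices of $S$ form a digon, $Y$ is an oriented graph, so $H := H(Y)$ has zero diagonal and every off-diagonal entry lies in $\{0,i,-i\}$. Complex conjugation therefore negates every entry, giving $\overline{H} = -H$. If $Hv = \lambda v$ then $\overline{H}\,\overline{v} = \overline{\lambda}\,\overline{v} = \lambda\,\overline{v}$, using that $\lambda\in\re$ by Proposition~\ref{prop:real}. Hence $-H$ has the same $H$-spectrum as $H$, i.e.\ $-\lambda$ is an eigenvalue of $Y$ with the same multiplicity whenever $\lambda$ is. (Equivalently, one may write $H = iS$ with $S$ a real skew-symmetric matrix, whose eigenvalues come in conjugate pairs $\pm i\mu$, so that $H$ has eigenvalues $\pm\mu$.) Writing $p$ for the number of strictly positive eigenvalues of $Y$, which equals the number of strictly negative ones, and $z = m - 2p$ for the multiplicity of $0$, the number of non-negative eigenvalues is $p + z = m - p$ and likewise the number of non-positive eigenvalues is $m - p$. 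As $p \le \floor*{m/2}$, both quantities are at least $\ceil*{m/2}$; that is, $\eta^+(Y) \ge \ceil*{m/2}$ and $\eta^-(Y) \ge \ceil*{m/2}$.

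Finally I would transfer these bounds to $X$. Let $\lambda_1 \ge \cdots \ge \lambda_n$ be the $H$-eigenvalues of $X$ and $\kappa_1 \ge \cdots \ge \kappa_m$ those of $Y$, and set $t = n - m$. By Corollary~\ref{cor:interlacing digraph} we have $\lambda_s \ge \kappa_s \ge \lambda_{s+t}$ for $s = 1,\dots,m$. Using the left inequality, the top $\eta^+(Y)$ eigenvalues $\kappa_1,\dots,\kappa_{\eta^+(Y)}$ are non-negative, which forces $\lambda_1,\dots,\lambda_{\eta^+(Y)} \ge 0$, so $\eta^+(X) \ge \eta^+(Y) \ge \ceil*{m/2}$. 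Using the right inequality on the bottom $\eta^-(Y)$ eigenvalues of $Y$, which are non-positive, forces the corresponding $\lambda_{s+t}$ to be non-positive, giving $\eta^-(X) \ge \eta^-(Y) \ge \ceil*{m/2}$.

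The conceptual heart of the argument is the symmetry $\overline{H(Y)} = -H(Y)$ of the second step; once this is in hand the eigenvalue counts for $Y$ are immediate. The only point requiring care in the final step is that the two halves of the interlacing inequality must be applied in opposite directions for $\eta^+$ and $\eta^-$, but this is purely routine index bookkeeping.
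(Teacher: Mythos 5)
Your proposal is correct and follows essentially the same route as the paper: restrict to the induced oriented subdigraph on the digon-free set, use the symmetry of its $H$-spectrum about $0$, and lift the resulting eigenvalue counts to $X$ by interlacing (Corollary~\ref{cor:interlacing digraph}). The only cosmetic difference is that you prove the symmetry inline via $\overline{H(Y)}=-H(Y)$ (equivalently $H=iS$ with $S$ real skew-symmetric), whereas the paper simply cites this fact (Theorem~\ref{thm:h-ori}, due to \cite{LiuLi15}), and your index bookkeeping via $\eta^{\pm}(Y)$ matches the paper's use of $\mu_{\ceil*{m/2}}\ge 0$ and $\mu_{\floor*{m/2}+1}\le 0$.
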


\begin{proof}
Let $\lambda_1 \geq \cdots \geq \lambda_n$ be the $H$-eigenvalues of $X$. Let $U\subseteq V(X)$, $|U|=m$, be a subset without digons, and let $B$ be the principal submatrix of $H(X)$ with rows and columns corresponding to $U$. As shown in \cite{LiuLi15} (see also Theorem \ref{thm:h-ori} in this paper), the $H$-eigenvalues $\mu_j$ ($1\le j\le m$) of $B$ are symmetric about 0. Therefore, $\mu_{\ceil*{\frac{m}{2}}} \ge 0$ and $\mu_{\floor*{\frac{m}{2}}+1} \le 0$.
By interlacing, we see that $\lambda_{\ceil*{ \frac{m}{2} }} \ge \mu_{\ceil*{\frac{m}{2}}} \ge  0$ and $\la_{n - \ceil*{\frac{m}{2}} + 1} \le \mu_{\floor*{\frac{m}{2}}+1} \le 0$. This implies the result.
\end{proof}

Similarly, the Cvetkovi\'{c} bound (see \cite{CDS95}) for the largest independent set of a graph extends to digraphs and their Hermitian adjacency matrix.

\begin{proposition}
If $X$ has an independent set of size $\alpha$, then $ \eta^+(X) \geq \alpha$
and $\eta^-(X) \geq \alpha$.
\end{proposition}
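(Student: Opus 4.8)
The plan is to exploit the fact that an independent set carries no arcs, so the corresponding principal submatrix of $H(X)$ vanishes identically. First I would let $S \subseteq V(X)$ be an independent set with $|S| = \alpha$, and let $B$ be the principal submatrix of $H = H(X)$ indexed by $S$. Since no two vertices of $S$ are joined by an edge in the underlying graph $\Gamma(X)$, every off-diagonal entry $H_{uv}$ with $u,v \in S$ is $0$, and the diagonal entries of $H$ are always $0$. Hence $B$ is the $\alpha \times \alpha$ zero matrix, and all of its eigenvalues $\kappa_1 = \cdots = \kappa_\alpha$ equal $0$.

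Next I would apply the interlacing property (Corollary \ref{cor:interlacing digraph}, or directly Theorem \ref{thm:interlacingGR}) with $t = n - \alpha$. Writing the eigenvalues of $H$ as $\lambda_1 \ge \cdots \ge \lambda_n$, interlacing gives $\lambda_s \ge \kappa_s \ge \lambda_{s+t}$ for $1 \le s \le \alpha$. Taking $s = \alpha$ in the left inequality yields $\lambda_\alpha \ge \kappa_\alpha = 0$, so $\lambda_1, \ldots, \lambda_\alpha$ are all non-negative; this shows $\eta^+(X) \ge \alpha$. Taking $s = 1$ in the right inequality yields $0 = \kappa_1 \ge \lambda_{1+t} = \lambda_{n-\alpha+1}$, so $\lambda_{n-\alpha+1}, \ldots, \lambda_n$ are all non-positive; this shows $\eta^-(X) \ge \alpha$.

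There is essentially no hard step here: once one observes that the submatrix supported on an independent set is the zero matrix, the result is an immediate two-sided application of interlacing. The only point requiring care is the index book-keeping at the two ends---using the left interlacing inequality at $s = \alpha$ for the non-negative eigenvalues and the right inequality at $s = 1$ for the non-positive eigenvalues---together with the observation that the eigenvalues are listed in decreasing order, so that a single sign conclusion at position $\alpha$ (respectively $n-\alpha+1$) propagates to all earlier (respectively later) eigenvalues. I would also remark that this mirrors the classical Cvetkovi\'c bound, the new content being only that $H(X)$ restricted to an independent set behaves exactly as in the undirected case.
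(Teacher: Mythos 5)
Your proof is correct and follows essentially the same route as the paper: both observe that the principal submatrix of $H(X)$ on an independent set is zero and invoke eigenvalue interlacing. The only cosmetic difference is that the paper obtains $\eta^-(X)\ge\alpha$ by applying the same argument to $-H(X)$, whereas you use the right-hand interlacing inequality $\kappa_1 \ge \lambda_{n-\alpha+1}$ directly; these are equivalent.
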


\begin{proof} Let $\lambda_1 \geq \cdots \geq \lambda_n$ be the eigenvalues of $H(X)$. By interlacing, we see that $\lambda_{\alpha} \geq 0$ and so $H(X)$ has at least $\alpha$ non-negative eigenvalues. Applying the same argument to $-H(X)$ shows that there are at least $\alpha$ non-positive eigenvalues as well.
\end{proof}

We can also obtain a spectral bound on the maximum induced transitive tournament of a digraph. In \cite{GrKiSh93}, Gregory, Kirkland and Shader found the tight upper bound on the spectral radius of a skew-symmetric matrix and classified the matrices which attain the bound. We will state a special case of their theorem restricted to the Hermitian adjacency matrices of oriented graphs. Let us recall that two tournaments are \emph{switching-equivalent} if one can be obtained from the other by reversing all arcs across an edge-cut of the underlying graph; see also Section \ref{sec:hfam-transtmt}.

\begin{theorem}[\cite{GrKiSh93}]
If $X$ is an oriented graph of order $n$, then
\[
\lambda_1(H(X)) \leq \cot \left(\frac{\pi}{2n} \right) .
\]
Equality holds if and only if $X$ is switching-equivalent to $T_n$, the transitive tournament of order $n$.
\end{theorem}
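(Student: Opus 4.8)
The plan is to reduce the statement to the skew-symmetric setting, where the cited theorem of Gregory, Kirkland and Shader lives, via a simple scaling. Since $X$ is an oriented graph it has no digons, so every nonzero off-diagonal entry of $H = H(X)$ is $\pm i$: concretely $H_{uv} = i$ exactly when $uv \in \arcset(X)$ and $H_{uv} = -i$ exactly when $vu \in \arcset(X)$. Let $S = S(X)$ be the skew-symmetric adjacency matrix, with $S_{uv} = 1$ for $uv \in \arcset(X)$, $S_{uv} = -1$ for $vu \in \arcset(X)$, and $0$ otherwise. Then $H = iS$. First I would record this identity and use it to transfer spectral data between the two matrices.

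Next I would relate the top eigenvalue of $H$ to the spectral radius of $S$. Being real skew-symmetric, $S$ has purely imaginary eigenvalues occurring in conjugate pairs $\pm i\mu$ with $\mu \in \re$; multiplying by $i$ sends these to the real pairs $\mp \mu$, which recovers the fact (Theorem \ref{thm:h-ori}) that the $H$-spectrum of an oriented graph is symmetric about $0$. In particular the largest $H$-eigenvalue equals the spectral radius of $H$, and since $H = iS$ we have
\[
\lambda_1(H(X)) = \rho(H) = \rho(S).
\]
At this point the Gregory--Kirkland--Shader theorem applies directly to the real skew-symmetric matrix $S$, whose off-diagonal entries have modulus at most $1$, yielding $\rho(S) \le \cot(\pi/(2n))$ and hence the desired bound.

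For the equality case I would argue in two directions. One needs that switching is a spectrum-preserving operation: reversing all arcs across the edge-cut determined by a vertex subset $T$ replaces $S$ by $DSD$, where $D = \diag(\varepsilon_u)$ with $\varepsilon_u = 1$ for $u \in T$ and $\varepsilon_u = -1$ otherwise, since $(DSD)_{uv} = \varepsilon_u \varepsilon_v S_{uv}$ flips the sign of $S_{uv}$ precisely on the arcs crossing the cut. As $D$ is an orthogonal signature matrix, $DSD$ is similar to $S$, so switching leaves the spectrum of $S$, and therefore of $H$, unchanged; this shows that any $X$ switching-equivalent to $T_n$ attains the bound, once one verifies that $H(T_n)$ itself has $\lambda_1 = \cot(\pi/(2n))$ (the eigenvalues of the skew matrix of the transitive tournament are $i\cot\bigl(\tfrac{(2k-1)\pi}{2n}\bigr)$, $k=1,\dots,n$, whose largest modulus is $\cot(\pi/(2n))$). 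Conversely, the equality clause of the Gregory--Kirkland--Shader theorem characterizes the extremal skew-symmetric matrices, and I would translate that characterization, expressed there as an orthogonal signature similarity together with a permutation to the skew matrix of the transitively oriented complete graph, into the statement that $X$ is switching-equivalent to $T_n$.

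The main obstacle is not the inequality -- that is an immediate consequence of the cited theorem after the scaling $H = iS$ -- but the faithful translation of the equality characterization. Specifically I expect the delicate point to be checking that the extremal condition in \cite{GrKiSh93}, which forces all off-diagonal entries to have modulus exactly $1$ (so that $X$ is a tournament) arranged in the pattern of $T_n$ up to signature conjugation, matches exactly the combinatorial notion of switching-equivalence used here, and that the permutation ambiguity in being similar to $S(T_n)$ corresponds only to a relabeling of the vertices.
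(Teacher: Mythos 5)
Your proposal is correct and is essentially the paper's own approach: the paper gives no independent proof of this theorem, stating it precisely as a special case of the Gregory--Kirkland--Shader result for skew-symmetric matrices, and your identity $H(X) = iS(X)$, combined with the symmetry of the $H$-spectrum of an oriented graph about $0$ (so that $\lambda_1(H) = \rho(H) = \rho(S)$) and the signature-similarity interpretation of switching, is exactly the translation that makes that specialization legitimate. The value $\lambda_1(H(T_n)) = \cot\left(\frac{\pi}{2n}\right)$ that you quote is confirmed later in the paper, where the $H$-eigenvalues of $T_n$ are computed via skew circulants as sums $\sum_{k} 2\sin\left(\frac{k(2j+1)\pi}{n}\right)$, which sum to $\cot\left(\frac{(2j+1)\pi}{2n}\right)$.
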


The following lemma is another immediate consequence of interlacing.

\begin{corollary}
If $X$ is a digraph with an induced subdigraph that is switching equivalent to $T_m$, then $\lambda_1(H(X)) \geq  \cot \left(\frac{\pi}{2m} \right)$.
\end{corollary}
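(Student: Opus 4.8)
The plan is to combine the equality case of the theorem of Gregory, Kirkland and Shader \cite{GrKiSh93} with eigenvalue interlacing. Let $Y$ denote the induced subdigraph of $X$ on $m$ vertices that is switching-equivalent to $T_m$. First I would observe that $Y$ is an oriented graph of order $m$: reversing all arcs across an edge-cut of the underlying graph never creates a digon, so switching-equivalence preserves the class of oriented graphs, and $T_m$ itself is an oriented graph. Hence the Gregory--Kirkland--Shader theorem applies to $Y$.

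Next I would apply the equality characterization in that theorem. Since $Y$ is switching-equivalent to $T_m$, it falls precisely into the equality case, so that
$$
\lambda_1(H(Y)) = \cot\left(\frac{\pi}{2m}\right).
$$

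Finally I would invoke interlacing. By Corollary \ref{cor:interlacing digraph}, the $H$-eigenvalues of the induced subdigraph $Y$ interlace those of $X$; taking the first of the interlacing inequalities gives $\lambda_1(H(X)) \ge \lambda_1(H(Y))$. Combining this with the previous display yields
$$
\lambda_1(H(X)) \ge \lambda_1(H(Y)) = \cot\left(\frac{\pi}{2m}\right),
$$
as required.

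The argument is short, and essentially the only point requiring care is verifying that the hypotheses of the Gregory--Kirkland--Shader theorem are genuinely met, namely that $Y$ is an oriented graph (no digons) on exactly $m$ vertices, so that its largest $H$-eigenvalue equals $\cot(\pi/(2m))$ with equality forced by switching-equivalence to $T_m$. Once this is in place, interlacing delivers the bound at once. An alternative route, should one prefer not to quote the equality case directly, is to use that switching operations preserve the $H$-spectrum (developed in Section \ref{sec:cospec}): then $Y$ is $H$-cospectral to $T_m$, whence $\lambda_1(H(Y)) = \lambda_1(H(T_m)) = \cot(\pi/(2m))$, and interlacing finishes the proof in the same way.
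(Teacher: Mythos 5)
Your proof is correct and matches the paper's intended argument: the paper states this corollary as an immediate consequence of interlacing, i.e.\ the equality case of the Gregory--Kirkland--Shader theorem gives $\lambda_1(H(Y)) = \cot\left(\frac{\pi}{2m}\right)$ for the induced subdigraph $Y$, and Corollary \ref{cor:interlacing digraph} then yields $\lambda_1(H(X)) \geq \lambda_1(H(Y))$. Your care in checking that $Y$ is genuinely an oriented graph (switching never creates digons) is a worthwhile detail the paper leaves implicit.
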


In other words, if $m > \frac{\pi}{2\cot^{-1}(\lambda_1(H(X)))}$ then $X$ cannot contain an induced subdigraph that would be switching-equivalent to $T_m$.

A more general version of the interlacing theorem (see, e.g. \cite{BH} or \cite{H95}) involves more general orthogonal projections.

\begin{theorem}
\label{thm:interlacingGR_general}
If $H$ is a Hermitian matrix of order $n$ and $S$ is a matrix of order $k\times n$ such that $SS^*=I_k$, then the eigenvalues of $H$ and the eigenvalues of $SHS^*$ interlace.
\end{theorem}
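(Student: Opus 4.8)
The plan is to reduce everything to the Courant--Fischer min-max characterization \eqref{eq:minmax jth largest}, exactly as in the proof of the ordinary interlacing theorem (Theorem~\ref{thm:interlacingGR}), the only new ingredient being that the hypothesis $SS^*=I_k$ makes $S^*$ an isometric embedding of $\cx^k$ into $\cx^n$. Write $B=SHS^*$, let $\la_1\ge\cdots\ge\la_n$ be the eigenvalues of $H$ and $\mu_1\ge\cdots\ge\mu_k$ those of $B$; the goal is to verify $\la_s\ge\mu_s\ge\la_{s+n-k}$ for $1\le s\le k$, so the interlacing parameter is $t=n-k$.

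First I would record the two elementary facts that drive the argument. From $SS^*=I_k$ we get $\|S^*\Zy\|^2=\Zy^*SS^*\Zy=\|\Zy\|^2$ for every $\Zy\in\cx^k$, so $S^*$ preserves norms and is injective; hence $\Zy\mapsto S^*\Zy$ carries each $s$-dimensional subspace $W\subseteq\cx^k$ bijectively onto an $s$-dimensional subspace $S^*W$ contained in $\im(S^*)$. Second, for $\Zz=S^*\Zy$ we have $\Zy^*B\Zy=\Zy^*SHS^*\Zy=\Zz^*H\Zz$ with $\|\Zz\|=\|\Zy\|$, so the Rayleigh quotient of $B$ at a unit vector $\Zy$ equals that of $H$ at the unit vector $\Zz$.

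Combining these facts with \eqref{eq:minmax jth largest} applied to $B$ rewrites each $\mu_s$ as a maximum-minimum (or minimum-maximum, where the inner optimization is over unit vectors) over the restricted family $\{U:\dim U=s,\ U\subseteq\im(S^*)\}$ instead of over all $s$-dimensional subspaces of $\cx^n$. For the upper bound I would use the max-min form: enlarging the family of admissible subspaces can only increase a maximum, so $\mu_s=\max_{U\subseteq\im(S^*),\,\dim U=s}\min_{\Zz\in U}\Zz^*H\Zz\le\la_s$. For the lower bound I would use the dual min-max form with $\dim U=k-s+1$; here enlarging the family can only decrease a minimum, so $\mu_s\ge\min_{\dim U=k-s+1}\max_{\Zz\in U}\Zz^*H\Zz$, and since $n-(s+n-k)+1=k-s+1$ the right-hand side is exactly $\la_{s+n-k}$ by \eqref{eq:minmax jth largest}. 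Together these give $\la_s\ge\mu_s\ge\la_{s+n-k}$, which is the claimed interlacing.

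There is no serious obstacle here: the entire content is the observation that $S^*$ is an isometry onto a $k$-dimensional subspace, after which the proof is the standard Courant--Fischer comparison. The only points demanding care are bookkeeping ones — matching the interlacing shift $t=n-k$ to the index $s+n-k$, and keeping straight that restricting the optimization domain lowers an outer maximum but raises an outer minimum, which is precisely the asymmetry that produces the two-sided bound.
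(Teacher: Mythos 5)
Your proof is correct. Note that the paper does not actually prove Theorem~\ref{thm:interlacingGR_general}; it quotes the result from the literature (citing Haemers and Brouwer--Haemers), so there is no in-paper argument to compare against --- but your Courant--Fischer argument is precisely the standard proof found in those references. The two facts you isolate (that $S^*$ is an isometric embedding of $\cx^k$ onto a $k$-dimensional subspace of $\cx^n$, and that Rayleigh quotients of $B=SHS^*$ pull back to Rayleigh quotients of $H$) are exactly what is needed, and your index bookkeeping $n-(s+n-k)+1=k-s+1$ correctly matches the interlacing shift $t=n-k$ in the paper's definition. The only steps you leave implicit are the trivial ones: $B$ is itself Hermitian (so the min-max formula \eqref{eq:minmax jth largest}, stated in the paper for Hermitian adjacency matrices but valid for any Hermitian matrix, applies to it), and the monotonicity of the outer max/min under enlarging the family of subspaces, which you do state and use with the correct directions.
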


For a digraph $X$, let $\Pi = V_1\cup \ldots \cup V_m$ be a partition of the vertex set of $X$ and order the vertices of $X$ such that $\Pi$ induces the following partition of $H(X)$ into block matrices:
\[
H(X) = \pmat{H_{11} & \ldots & H_{1m} \\
\vdots & \ddots & \vdots \\
H_{m1} & \ldots & H_{mm}}.
\]
The \emph{quotient} matrix of $H(X)$ with respect to $\Pi$ is the matrix $B = [b_{jk}]$ where $b_{jk} $ is the average row sum of the block matrix $H_{jk}$, for $j,k \in [m]$. The following corollary of Theorem \ref{thm:interlacingGR_general} is proved in the same way as the corresponding result for undirected graphs found in \cite{H95}.

\begin{theorem}
\label{thm:interlacing quotient}
Eigenvalues of the quotient matrix of $H(X)$ with respect to a partition of vertices interlace those of $H(X)$.
\end{theorem}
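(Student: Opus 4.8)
The plan is to realize the quotient matrix as essentially $SHS^*$ for a suitable matrix $S$ with orthonormal rows, so that Theorem \ref{thm:interlacingGR_general} applies directly. Let $\Pi = V_1 \cup \cdots \cup V_m$ be the given partition and set $n_j = |V_j|$. First I would introduce the normalized characteristic matrix $S \in \cx^{m \times n}$ defined by $S_{jv} = n_j^{-1/2}$ if $v \in V_j$ and $S_{jv} = 0$ otherwise. Since the parts are pairwise disjoint and each row of $S$ is a unit vector, one checks immediately that $SS^* = I_m$.

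Next I would compute $SHS^*$ entrywise. The $(j,k)$-entry equals $n_j^{-1/2} n_k^{-1/2}$ times the sum of all entries of the block $H_{jk}$. Recalling that the quotient entry $b_{jk}$ is the average row sum of $H_{jk}$, namely $n_j^{-1}$ times that same block sum, we obtain $(SHS^*)_{jk} = (n_j^{1/2}/n_k^{1/2})\, b_{jk}$. Hence, writing $D = \diag(n_1^{1/2}, \ldots, n_m^{1/2})$, a short calculation gives $B = D^{-1}(SHS^*)D$, so that $B$ is similar to $SHS^*$ and therefore has exactly the same eigenvalues (with multiplicities). In particular, although the entries of $B$ may be complex, its eigenvalues are real because $SHS^*$ is Hermitian.

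Finally I would invoke Theorem \ref{thm:interlacingGR_general}: since $SS^* = I_m$, the eigenvalues of $SHS^*$ interlace those of $H$. Combining this with the similarity $B = D^{-1}(SHS^*)D$ shows that the eigenvalues of the quotient matrix $B$ interlace those of $H(X)$, which is the claim.

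The only point requiring care -- the ``hard part,'' though it is mild -- is that $H$ is complex Hermitian rather than real symmetric, so one must confirm that the averaging argument is insensitive to this: the matrix $SHS^*$ remains Hermitian, the conjugating matrix $D$ is real and invertible, and the block-sum computation goes through verbatim with complex entries. Once these checks are made, the argument is identical in structure to the undirected case in \cite{H95}.
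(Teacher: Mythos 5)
Your proof is correct and is essentially the paper's own approach: the paper proves this theorem by deferring to the standard argument of Haemers in \cite{H95}, which is exactly your construction of the normalized characteristic matrix $S$ with $SS^*=I_m$, the diagonal similarity $B = D^{-1}(SHS^*)D$, and an appeal to Theorem \ref{thm:interlacingGR_general}. Nothing further is needed.
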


The partition $\Pi$ is said to be \emph{equitable} if each block $H_{jk}$ has constant row sums, for $j,k \in [m]$. In this case, more can be said.

\begin{corollary}
\label{cor:interlacing-equitable}
Let $\Pi$ be an equitable partition of the vertices of $X$ and $B$ is the quotient matrix of $H(X)$ with respect to $\Pi$. If $\lambda$ is an eigenvalue of $B$ with multiplicity $\mu$, then $\lambda$ is an eigenvalue of $H(X)$ with multiplicity at least $\mu$.
\end{corollary}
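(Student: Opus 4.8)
The plan is to use the \emph{characteristic matrix} of the partition to intertwine $H(X)$ with its quotient $B$, and then exploit that $H(X)$ is Hermitian to split its spectrum. First I would let $P \in \cx^{V \times [m]}$ be the $0/1$ matrix whose $k$th column is the indicator vector $\ones_{V_k}$ of the part $V_k$; since the parts are nonempty and disjoint, the columns of $P$ are linearly independent, so $P$ has full column rank and $P^* P = \diag(|V_1|, \dots, |V_m|)$. The defining feature of an equitable partition---that every block $H_{jk}$ has constant row sum equal to $b_{jk}$---is exactly the matrix identity $H(X)\,P = P\,B$: for $v \in V_j$ the $(v,k)$-entry of $H(X)P$ is $\sum_{u \in V_k} H_{vu} = b_{jk}$, which is the $(v,k)$-entry of $PB$.

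Next I would normalize. Writing $N = \diag(\sqrt{|V_1|}, \dots, \sqrt{|V_m|})$ and $S = N^{-1} P^*$, one checks $S S^* = N^{-1} P^* P N^{-1} = I_m$, so $S$ fits the hypothesis of Theorem \ref{thm:interlacingGR_general}. From $HP = PB$ I get $P^* H P = (P^* P) B = N^2 B$, and since $P^* H P$ is Hermitian (as $H$ is), the matrix $\tilde B := S H S^* = N^{-1} (P^* H P) N^{-1} = N B N^{-1}$ is Hermitian. In particular $\tilde B$ is similar to $B$, so $B$ has real eigenvalues, is diagonalizable, and shares each eigenvalue of $\tilde B$ with the same multiplicity; this is what lets me speak of ``the multiplicity $\mu$ of $\lambda$'' unambiguously despite $B$ itself not being Hermitian.

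The key structural step is that the identity $HP = PB$ says precisely that the column space $\col(P)$ is invariant under $H = H(X)$. Because $H$ is Hermitian, its orthogonal complement $\col(P)^\perp$ is also $H$-invariant, so $H$ is unitarily block-diagonal with respect to $\cx^V = \col(P) \oplus \col(P)^\perp$. Expressing the restriction of $H$ to $\col(P)$ in the orthonormal basis given by the columns of $S^* = P N^{-1}$ yields exactly $\tilde B$. Hence the $H$-spectrum $\sigma_H(X)$ is, as a multiset, the disjoint union of the spectrum of $\tilde B$ and the spectrum of the block acting on $\col(P)^\perp$. Consequently every eigenvalue $\lambda$ of $\tilde B$---equivalently of $B$---of multiplicity $\mu$ occurs in $\sigma_H(X)$ with multiplicity at least $\mu$, which is the claim.

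I would note that the same conclusion can be reached more directly without orthogonal complements: if $Bv = \lambda v$ then $H(Pv) = P(Bv) = \lambda(Pv)$, and $Pv \neq 0$ since $P$ is injective, so the injective map $v \mapsto Pv$ carries the (by diagonalizability, full) $\mu$-dimensional $\lambda$-eigenspace of $B$ into the $\lambda$-eigenspace of $H$. The only point requiring care---and the one I would flag as the main obstacle---is that $B$ is not Hermitian, so neither the meaning of its ``multiplicity'' nor the diagonalizability needed to guarantee a $\mu$-dimensional eigenspace is automatic; both are supplied by the similarity $\tilde B = N B N^{-1}$ to the Hermitian matrix $\tilde B$.
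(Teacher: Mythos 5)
Your proposal is correct, and it is essentially the argument the paper intends: the paper gives no explicit proof of this corollary, deferring to the method of Haemers (\cite{H95}), and your derivation---the identity $H(X)P = PB$ for the characteristic matrix $P$, the similarity of $B$ to the Hermitian matrix $NBN^{-1}$, and the $H$-invariance of $\col(P)$ together with its orthogonal complement---is precisely that standard argument. Your closing observation is also well placed: plain interlacing (Theorem \ref{thm:interlacing quotient}) cannot yield the multiplicity statement by itself, so the invariant-subspace (or injectivity of $v \mapsto Pv$) step you supply is genuinely needed, and the similarity $\tilde B = NBN^{-1}$ correctly disposes of the worry that $B$ itself is not Hermitian.
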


These results remain true in the setting of digraphs with multiple edges.

\section{Spectral radius}
\label{sec:hperron}
\label{sec:minusrho}

For a digraph $X$, let the eigenvalues of $H =H(X)$ be $\lambda_1 \geq \cdots \geq \lambda_n$. Note that since $H$ is not a matrix with non-negative entries, there is no analogue of the Perron value of the adjacency matrix and the properties of $\lambda_1$ may be highly unintuitive. Figure \ref{fig:k3k4prime} shows a strongly connected digraph $K_3'$ on $3$ vertices with $H$-eigenvalues $\{1^{(2)}, -2\}$. This shows that, in general, $\lambda_1$ is not necessarily simple or largest in magnitude.

Instead of considering the largest eigenvalue, we may consider the largest eigenvalue in absolute value, for which we find a bound that is analogous to the adjacency matrix. The \emph{spectral radius} $\rho(M)$ of a matrix $M$ is defined as
\[ \rho(M) = \max\{|\lambda| \mid \lambda \text{ an eigenvalue of }M \}
\]
and we also define $\rho(X) = \rho(H(X))$ as the \emph{spectral radius} of the digraph $X$.

\begin{theorem}
\label{thm:rhodelta}
If $X$ is a digraph (multiple edges allowed), then $ \rho(X) \leq \Delta(\Gamma(X))$. When $X$ is weakly connected, the equality holds if and only if $\Gamma(X)$ is a $\Delta(\Gamma(X))$-regular graph and there exists a partition of $V(X)$ into four (possibly empty) parts $V_{1}$, $V_{-1}$, $V_{i}$, and $V_{-i}$ such that one of the following holds:
\begin{enumerate}[\rm (i)]
\item For $j \in \{\pm 1, \pm i\}$, the digraph induced by $V_j$ in $X$ contains only digons. Every other arc $uv$ of $X$ is such that $u \in V_j$ and $v \in V_{(-i)j}$ for some $j \in \{\pm 1, \pm i\}$. See Figure \ref{fig:rhodelta}.
\item For $j \in \{\pm 1, \pm i\}$, the digraph induced by $V_j$ in $X$ is an independent set. For each $j \in \{\pm 1, \pm i\}$, every arc with one end in $V_{j}$ and one end in $V_{-j}$ is contained in a digon. Every other arc $uv$ of $X$ is such that $u \in V_j$ and $v \in V_{ij}$ for some $j \in \{\pm 1, \pm i\}$. See Figure \ref{fig:rhodelta}.
\end{enumerate}
\end{theorem}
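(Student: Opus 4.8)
The plan is to prove the inequality by a standard entrywise-domination argument and then to analyze the equality case through the eigenvector attaining $\rho(X)$.

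For the bound, let $|H|$ denote the matrix obtained from $H=H(X)$ by replacing each entry with its modulus. Since every entry satisfies $|H_{uv}|=|a+(b-c)i|=\sqrt{a^2+(b-c)^2}\le a+|b-c|\le a+b+c$, where $a,b,c$ are the multiplicities from the definition, $|H|$ is dominated entrywise by the adjacency matrix $A(\Gamma(X))$ of the underlying (multi)graph. Because $\rho(M)\le\rho(N)$ whenever $0\le M\le N$ entrywise, and $\rho(H)\le\rho(|H|)$ for any complex matrix, we get $\rho(X)=\rho(H)\le\rho(A(\Gamma(X)))\le\Delta(\Gamma(X))$, the last step being the classical degree bound for graphs. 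For the equality discussion I will restrict to simple digraphs, where $|H|=A(\Gamma(X))$; the multigraph case adds nothing, since equality already forces $|H_{uv}|$ to equal the edge multiplicity and collapses multiple edges into the simple structure below.

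Suppose now that $X$ is weakly connected and $\rho(X)=\Delta:=\Delta(\Gamma(X))$. Then $\lambda=\epsilon\Delta$ is an $H$-eigenvalue for some sign $\epsilon\in\{+1,-1\}$; fix a unit eigenvector $\Zz$ with $H\Zz=\lambda\Zz$. First I would pick a vertex $u$ maximizing $|z_u|=:m$ and read off $\lambda z_u=\sum_{v\sim u}H_{uv}z_v$. Taking moduli gives $\Delta m=|\sum_{v\sim u}H_{uv}z_v|\le\sum_{v\sim u}|z_v|\le d(u)m\le\Delta m$, so every inequality is tight: $d(u)=\Delta$, all neighbours $v$ satisfy $|z_v|=m$, and the triangle inequality is an equality. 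Propagating along the connected graph $\Gamma(X)$ shows that $\Gamma(X)$ is $\Delta$-regular and $|z_v|=m$ for every $v$; rescaling $\Zz$, I may take $m=1$, so each $z_v$ is a unit complex number. Equality in the triangle inequality forces the $\Delta$ summands $H_{uv}z_v$, each of modulus $1$, to share one argument, and since they sum to $\lambda z_u=\epsilon\Delta z_u$, each equals $\epsilon z_u$. Thus
\[
H_{uv}\,z_v=\epsilon\,z_u \qquad\text{for every edge } uv \text{ of } \Gamma(X),
\]
the single global identity driving the rest of the argument.

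From $H_{uv}z_v=\epsilon z_u$ and $|z_v|=1$ I get $z_v=\epsilon\,\comp{H_{uv}}\,z_u=\epsilon\,H_{vu}\,z_u$, so along each edge $z$ is multiplied by $\epsilon$ times an element of $\{1,i,-i\}$. Fixing $z_{v_0}=1$ for a base vertex and using connectivity, every $z_v$ lies in $\{1,i,-1,-i\}$, defining the four parts $V_j=\{v:z_v=j\}$ for $j\in\{\pm1,\pm i\}$. It then remains to translate $H_{uv}=\epsilon z_u\comp{z_v}$ into the arc structure. When $\epsilon=+1$ (so $\lambda=\Delta$), a digon $H_{uv}=1$ forces $z_u=z_v$, a same-part edge, while a single arc $u\to v$ with $H_{uv}=i$ forces $z_u=iz_v$, i.e.\ $u\in V_j$ and $v\in V_{-ij}$; this is exactly case (i). When $\epsilon=-1$ (so $\lambda=-\Delta$), a digon forces $z_u=-z_v$, so digons run between $V_j$ and $V_{-j}$, each part is independent, and a single arc $u\to v$ forces $z_u=-iz_v$, i.e.\ $u\in V_j$ and $v\in V_{ij}$; this is exactly case (ii). The main obstacle is precisely this bookkeeping: extracting the clean identity $H_{uv}z_v=\epsilon z_u$ from equality in the complex triangle inequality, and then checking that the possibilities $H_{uv}\in\{1,i,-i\}$ and their conjugates line up with the two stated configurations without sign or direction errors. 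Finally, for sufficiency I would run the computation in reverse: given a $\Delta$-regular $\Gamma(X)$ with structure (i) or (ii), set $z_v=j$ for $v\in V_j$ and verify directly that each neighbour of a vertex $u$ contributes $\epsilon z_u$ to $(H\Zz)_u$, so $H\Zz=\epsilon\Delta\Zz$; hence $\rho(X)\ge\Delta$, and with the upper bound $\rho(X)=\Delta$.
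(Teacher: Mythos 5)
Your proof is correct, and its core---the analysis of the equality case---follows the same route as the paper: pick an eigenvector entry of maximum modulus, force tightness in the triangle inequality, propagate via weak connectivity to get $\Delta$-regularity and unimodular entries, and read off a partition of the vertices by fourth roots of unity. Two differences are worth noting. First, you prove the upper bound by matrix domination, $\rho(H)\le\rho(|H|)\le\rho(A(\Gamma(X)))\le\Delta(\Gamma(X))$, which invokes Perron--Frobenius monotonicity and the fact $\rho(B)\le\rho(|B|)$; the paper instead obtains the bound directly from the same maximum-modulus eigenvector computation that it later reuses for the equality case, so your detour is valid but slightly redundant---your own equality analysis (the chain $\Delta m\le\sum_{v\sim u}|z_v|\le d(u)m\le\Delta m$ with $|\lambda|=\rho$) re-derives the bound anyway. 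Second, your equality analysis is tidier than the paper's: you compress everything into the single identity $H_{uv}z_v=\epsilon z_u$ (equivalently $H_{uv}=\epsilon z_u\comp{z_v}$) valid for every edge of $\Gamma(X)$, and then obtain cases (i) and (ii) by pure algebra, whereas the paper propagates the case analysis vertex by vertex (first vertices with value $1$, then $-i$, and so on). Your bookkeeping checks out, including the direction conventions ($v\in V_{(-i)j}$ when $\epsilon=+1$, $v\in V_{ij}$ when $\epsilon=-1$, digons within parts in case (i), digons only between $V_j$ and $V_{-j}$ and independent parts in case (ii)), and your converse is identical to the paper's. One small caution: the theorem allows multiple edges, and your one-line dismissal of the multigraph case is terse, though defensible---tightness of $|H_{uv}|=\sqrt{a^2+(b-c)^2}\le a+b+c$ forces $ab=ac=bc=0$, so each parallel class consists either entirely of digons or entirely of arcs in one direction, and the argument then runs with $H_{uv}/|H_{uv}|$ in place of $H_{uv}$; the paper's own proof does not treat this case explicitly either.
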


\begin{figure}
\centering
\includegraphics[scale=0.7]{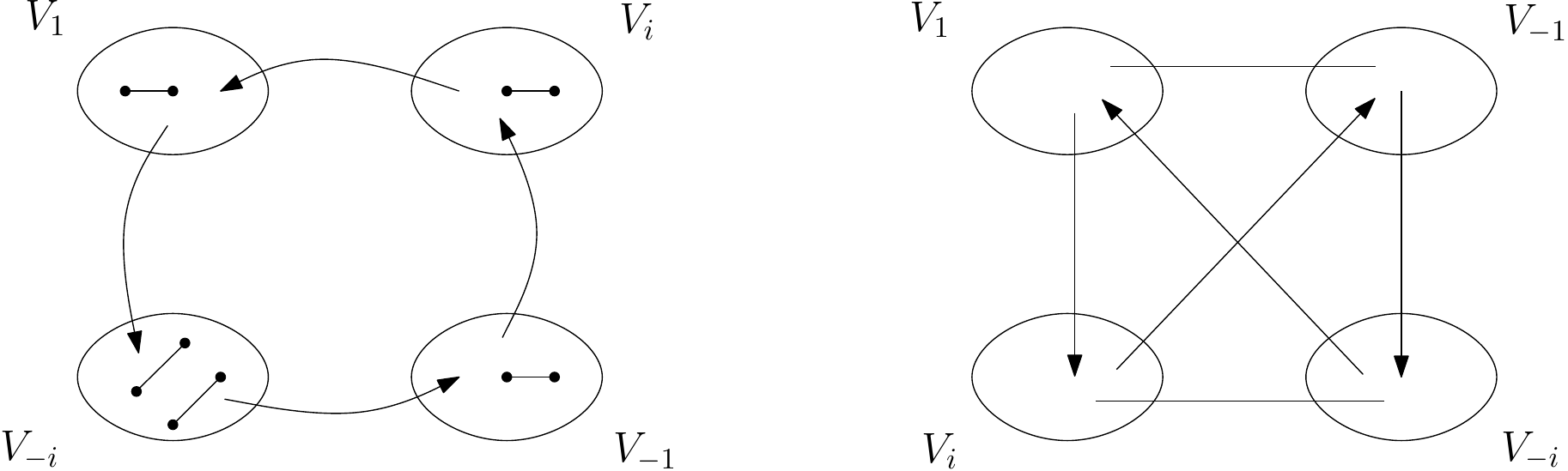}
\caption{Cases (i) and (ii) of Theorem \ref{thm:rhodelta}.}\label{fig:rhodelta}
\end{figure}

\begin{proof}
Let $H =H(X)$ and let $\lambda$ be an eigenvalue of $H$ with eigenvector $\Zx$. Choose $v\in V(X)$ such that $|\Zx(v)|$ is maximum. Now we consider the $v$-entry of $H\Zx$. For simplicity of notation, we will write $N(v) := \innhd{X}{v} \cap \outnhd{X}{v}$. We obtain
\[
(H\Zx)(v) = \sum_{u \in N(v)} \Zx(u) ~+~ i \sum_{w \in \outnhd{X}{v}\setminus N(v)} \Zx(w) ~-~ i \sum_{y \in \innhd{X}{v}\setminus N(v)} \Zx(y).
\]
On the other hand, $(H\Zx)(v) = \lambda \Zx(v)$. Then
\begin{eqnarray}
|\lambda \Zx(v)| &=& |(H\Zx)(v)| \nonumber \\[1mm]
&\leq& \sum_{u \in N(v)} |\Zx(u)|
~+ \sum_{w \in \outnhd{X}{v}\setminus N(v)} |\Zx(w)|
~+ \sum_{y \in \innhd{X}{v}\setminus N(v)} |\Zx(y)| \nonumber \\
&\leq& \sum_{u \in N(v)} |\Zx(v)| ~+ \sum_{w \in \outnhd{X}{v}\setminus N(v)} |\Zx(v)| ~+ \sum_{y \in \innhd{X}{v}\setminus N(v)} |\Zx(v)| \nonumber \\[2mm]
&=& \deg_{\Gamma(X)}(v)\, |\Zx(v)| \label{eq:largeevdeg} \\[2mm]
&\leq& \Delta(\Gamma(X))\, |\Zx(v)|. \nonumber
\end{eqnarray}
From this, we obtain that $|\lambda| \leq \Delta(\Gamma(X))$.

If $ \rho(H(X)) = \Delta(\Gamma(X))$, then all of the inequalities in (\ref{eq:largeevdeg}) must hold with equality. From the last inequality of (\ref{eq:largeevdeg}), we see that $v$ has degree $\Delta(\Gamma(X))$  in $\Gamma(X)$. If the second inequality of (\ref{eq:largeevdeg}) holds with equality, we obtain that $|\Zx(z)| = |\Zx(v)|$ for all $z \in \innhd{X}{v} \cup \outnhd{X}{v}$. Since the choice of $v$ was arbitrary amongst all vertices attaining the maximum absolute value in $\Zx$, we may apply this same argument to any vertex adjacent to $v$ in $\Gamma(X)$. Since $X$ is weakly connected, we obtain that $|\Zx(z)| = |\Zx(v)|$ for any $z \in V(X)$.

The first inequality of (\ref{eq:largeevdeg}) follows from the triangle inequality for sums of complex numbers, and so equality holds if and only if every complex number in $Z$ has the same argument as $\lambda \Zx(v)$, where
\[
\begin{split}
Z ~=~ &\{ \Zx(u) \mid u \in N(v) \}\cup  \\
&\{i\Zx(w) \mid w \in \outnhd{X}{v}\setminus N(v) \}\cup  \\
&\{ -i\Zx(y) \mid y \in \innhd{X}{v}\setminus N(v)\} .
\end{split} \]
We may normalize $\Zx$ such that $\Zx(v) = 1$. There are three cases: $\lambda =0$, $\lambda$ is positive or $\lambda$ is negative. Since we are bounding the spectral radius, we need not consider the $\lambda =0$ case; the only digraph with $\rho(X) = 0$ is the empty graph and the statement of the theorem holds there.

Suppose that $\lambda$ is positive. Every complex number in $Z$ has the same norm and same argument as $\Zx(v)=1$ and is thus equal to $1$. We conclude that
\[
\Zx(z) =
\begin{cases}
1, & \text{if } z \in N(v); \\
-i, & \text{if } z\in \outnhd{X}{v}\setminus N(v); \text{ and } \\
i, & \text{if } z\in \innhd{X}{v}\setminus N(v).
\end{cases}
\]
Repeating the argument at a vertex $w$ such  that $\Zx(w) = -i$, we see that:
\[
\Zx(z) =
\begin{cases}
-i, & \text{if } z \in N(w); \\
-1, & \text{if } z\in \outnhd{X}{w}\setminus N(w); \text{ and } \\
+1, & \text{if } z\in \innhd{X}{w}\setminus N(w).
\end{cases}
\]
Similar argumant can be used when $\Zx(z)=-1$ or $\Zx(z)=i$. From this we conclude that $V(X)$ has a partition into sets $V_{1}$, $V_{-1}$, $V_{i}$, and $V_{-i}$ such that condition (i) of the theorem holds.

Suppose now that $\lambda$ is negative. Every complex number in $Z$ has the same norm and same argument as $-1$ and is hence equal to $-1$. Thus we obtain that
\[
\Zx(z) =
\begin{cases}
-1, & \text{if } z \in N(v); \\
i, & \text{if } z\in \outnhd{X}{v}\setminus N(v); \text{ and } \\
-i, & \text{if } z\in \innhd{X}{v}\setminus N(v).
\end{cases}
\]
Repeating the argument at vertices $w$ such  that $\Zx(w) = -1$ or $\pm i$, we see that $V(X)$ has a partition into $V_{1}$, $V_{-1}$, $V_{i}$, and $V_{-i}$ such that condition (ii) of the theorem holds.

We now consider the converse for the two cases of the theorem. Let $X$ be a digraph such that $\Gamma(X)$ is $k$-regular. Suppose that $V(X)$ has a partition  $V_{1}$, $V_{-1}$, $V_{i}$, and $V_{-i}$ such that condition (i) or (ii) holds. Let $\Zx$ be the vector indexed by the vertices of $X$ such that $\Zx(z) = j$ if $z \in V_j$. Then it is easy to see that for every vertex $v$ we have $(H\Zx)(v) = k\Zx$ (in case (i)) or $(H\Zx)(v) = -k\Zx$ (in case (ii)). Thus $\Zx$ is an eigenvector for $H$ with eigenvalue $\pm k$, and the bound is tight as claimed.
\end{proof}

For undirected graphs, $\rho(X)$ is always larger or equal to the average degree. However, for digraphs, $\rho(X)$ can be smaller than the minimum degree in $\Gamma(X)$. An example is the digraph $\widetilde{C_3}$ shown in Figure \ref{fig:hsymnotbip} which has eigenvalues $\pm \sqrt{3}$ and $0$, while its underlying graph has minimum degree~2. Of course, this anomaly is also justified by Theorem \ref{thm:rhodelta} since $\widetilde{C_3}$ does not have the structure as in Figure \ref{fig:rhodelta}.

Next we shall discuss digraphs whose $H$-spectral radius $\rho(X)$ is larger than the largest $H$-eigenvalue $\la_1(X)$.
In that case, $\rho(X)$ is attained by the absolutely largest negative eigenvalue. First we treat an extremal case in which the eigenvalues are precisely the opposite of those for complete graphs.

\subsection*{Digraphs with spectrum $\{-(n-1), 1^{(n-1)}\}$ }
\label{subsec:minuskn}

The tightness case of Theorem \ref{thm:rhodelta} shows when $\la_1(X)$ is large in terms of vertex degrees. By trying to do the converse -- make $\la_1(X)$ small -- we arrive to digraphs $K_3'$ and $K_4'$ (see Figure \ref{fig:k3k4prime}) with $H$-spectra $\{-2, 1^{(2)}\}$ and $\{-3, 1^{(3)}\}$, respectively. It is worth mentioning that each of $K_3'$ and $K_4'$ has its $H$-spectrum that is the negative of the spectrum of their underlying complete graphs $K_3$ and $K_4$, respectively.  Naturally, we may ask if there are other digraphs on $n$ vertices whose $H$-spectrum is $\{-(n-1), 1^{(n-1)}\}$. Such digraphs would have a large negative eigenvalue and small positive eigenvalues and thus exhibit extreme spectral behaviour, opposite to the behaviour of undirected graphs, whose spectral radius always equals the largest (positive) eigenvalue. We answer this in the negative and show that $K_3'$ and $K_4'$ are the only non-trivial digraphs with this property.

\begin{figure}[ht!]
\centering
\begin{tikzpicture}
	\begin{pgfonlayer}{nodelayer}
		\node [style=new] (0) at (-4, 2.25) {};
		\node [style=new] (1) at (-5, 0.5) {};
		\node [style=new] (2) at (-3, 0.5) {};
		\node [style=new] (3) at (-1, 2.5) {};
		\node [style=new] (4) at (-1, 0.5) {};
		\node [style=new] (5) at (1, 2.5) {};
		\node [style=new] (6) at (1, 0.5) {};
		\node [style=n] (7) at (-4, -0.25) {$K_3'$};
		\node [style=n] (8) at (0, -0.25) {$K_4'$};
	\end{pgfonlayer}
	\begin{pgfonlayer}{edgelayer}
		\draw [directed] (1) to (0);
		\draw [directed] (0) to (2);
		\draw [directed, bend right=15] (2) to (1);
		\draw [directed, bend right=15] (3) to (5);
		\draw [directed, bend right=15] (4) to (6);
		\draw [directed] (4) to (3);
		\draw [directed] (3) to (6);
		\draw [directed] (6) to (5);
		\draw [directed] (5) to (4);
		\draw [directed, bend right=15] (1) to (2);
		\draw [directed, bend right=15] (6) to (4);
		\draw [directed, bend right=15] (5) to (3);
	\end{pgfonlayer}
\end{tikzpicture}
\caption{$K_3'$ and  $K_4'$. \label{fig:k3k4prime}}
\end{figure}
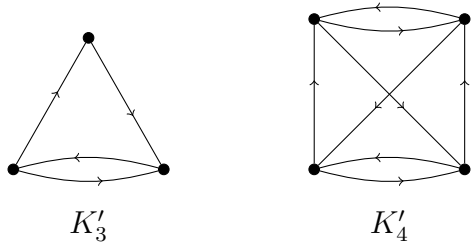

\begin{proposition}
\label{prop:hfam-minuskn}
If $X$ is a digraph such that $\sigma_H(X) = \{-(n-1), (-1)^{(n-1)}\}$, then $X \cong Y$ where $Y \in \{K_1, K_2, T_2, K_3', K_4'\}$, where $T_2$ is the oriented $K_2$.
\end{proposition}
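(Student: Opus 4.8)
The plan is to first pin down the underlying graph, then exploit the fact that $H=H(X)$ has only two distinct eigenvalues to write it as a rank-one perturbation of the identity, and finally read off a severe restriction on $n$.

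First I would record that $\sum_j \la_j^2 = (n-1)^2 + (n-1) = n(n-1)$, and note that the multiplicity-$(n-1)$ eigenvalue must be $+1$ (not $-1$): since the diagonal of $H$ vanishes, $\sum_j \la_j = 0$ forces it, which is exactly why the surrounding examples carry spectrum $\{-(n-1),1^{(n-1)}\}$. By Proposition \ref{prop:traceh3}(ii) we have $\sum_j \la_j^2 = 2e$, where $e$ is the number of edges of $\Gamma(X)$, so $e = \binom{n}{2}$. As $\Gamma(X)$ is a simple graph on $n$ vertices, this forces $\Gamma(X)=K_n$; in particular every off-diagonal entry of $H$ lies in $\{1,i,-i\}$.

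Next, since $H$ has exactly the two eigenvalues $1$ (multiplicity $n-1$) and $-(n-1)$ (multiplicity $1$), its spectral decomposition reads $H = I - nP$, where $P = \Zx\Zx^*/\|\Zx\|^2$ is the orthogonal projection onto the one-dimensional eigenspace of $-(n-1)$, spanned by an eigenvector $\Zx$. Because the diagonal of $H$ is zero, $0 = 1 - n|\Zx(u)|^2/\|\Zx\|^2$ for every $u$, so all entries of $\Zx$ have equal modulus; after normalizing $\|\Zx\|^2 = n$ we obtain $H = I - \Zx\Zx^*$ with $|\Zx(u)| = 1$ for all $u$, whence $H_{uv} = -\Zx(u)\overline{\Zx(v)}$ for $u \ne v$.

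The crux is then purely arithmetic. Writing $\Zx(u)=\zeta_u$ with $|\zeta_u|=1$, the constraint $H_{uv}\in\{1,i,-i\}$ becomes $\zeta_u/\zeta_v = \zeta_u\overline{\zeta_v} \in \{-1,i,-i\}$ for all $u\ne v$. Fixing a base vertex and dividing, all the ratios $\zeta_u/\zeta_1$ lie in the set of fourth roots of unity $\{1,-1,i,-i\}$, and the condition $\zeta_u/\zeta_v \ne 1$ says these ratios are pairwise distinct. Four values cannot be assigned distinctly to more than four vertices, so $n\le 4$. I expect this distinctness-forces-$n\le4$ step, together with correctly reconstructing $H$ from $\Zx$, to be the main point; the earlier reductions are routine. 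To finish I would run the finite check: for each $n\in\{1,2,3,4\}$, choosing $n$ distinct fourth roots of unity for $\Zx$ (up to a global phase, i.e.\ $\zeta_1=1$, and up to relabeling vertices and replacing $\Zx$ by $\overline{\Zx}$, which only passes to the converse) determines $H = I - \Zx\Zx^*$ uniquely up to isomorphism. Computing the entries $-\zeta_u\overline{\zeta_v}$ then identifies the digraph as $K_1$ ($n=1$); $K_2$ or $T_2$ according as $\zeta_2=-1$ or $\zeta_2=\pm i$ ($n=2$); $K_3'$ ($n=3$, a directed path of length two closed by a digon on its ends); and $K_4'$ ($n=4$, a directed $4$-cycle whose two diagonals are digons). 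This exhausts the list $\{K_1,K_2,T_2,K_3',K_4'\}$ and completes the proof.
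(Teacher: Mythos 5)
Your proof is correct, but it takes a genuinely different route from the paper's for the heart of the argument. Both proofs begin the same way (including silently repairing the typo in the statement: the spectrum must be $\{-(n-1),1^{(n-1)}\}$, since the trace vanishes) and both deduce $\Gamma(X)\cong K_n$ from $\sum_j\lambda_j^2=2e$. From there the paper stays combinatorial: it computes $\tr(H^3)=-6\binom{n}{3}$ and invokes Proposition \ref{prop:traceh3}(iii) to force every induced triangle of $X$ to be $K_3'$, then analyzes the digon graph $G(X)$ (each component is $K_1$ or $K_2$, and there are at most two components, else one finds a forbidden triangle), which pins $n\in\{3,4\}$ and reconstructs $K_3'$ and $K_4'$ by deleting vertices. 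You instead exploit the two-eigenvalue structure directly: $H=I-\Zx\Zx^*$ with $\Zx$ a flat unimodular eigenvector (flatness forced by the zero diagonal), so $H_{uv}=-\zeta_u\overline{\zeta_v}$, and the requirement that these entries lie in $\{1,i,-i\}$ makes the $\zeta_u$ pairwise distinct fourth roots of unity, giving $n\le 4$ and the classification in one stroke. Your route avoids the $\tr(H^3)$ machinery and the triangle case analysis entirely, and it buys two extra things: the verification that each candidate digraph really has the claimed spectrum comes for free (eigenvalues of $I-\Zx\Zx^*$ are $1^{(n-1)}$ and $1-n$), and it makes the edge-count step redundant, since $|H_{uv}|=1$ already forces $\Gamma(X)=K_n$. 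The paper's route, on the other hand, is more elementary (no spectral decomposition) and illustrates how the trace formulas extract induced-subgraph structure, which is a technique reused elsewhere in the paper. One small remark: your phrase ``up to a global phase'' is doing no work, since replacing $\Zx$ by $e^{i\theta}\Zx$ leaves $\Zx\Zx^*$, hence $H$, literally unchanged; the relevant symmetries in your finite check are vertex relabeling and conjugation (passage to the converse), and since $K_3'$ and $K_4'$ are self-converse the check closes as you say.
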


\begin{proof}
Let $X$ be a digraph of order $n$ such that $H = H(X)$ has spectrum $\{-(n-1), (-1)^{(n-1)}\}$. If $n=1$, then $X \cong K_1$ and if $n=2$ then $X \cong K_2$ or $T_2$. Suppose now that $n\geq3$. The characteristic polynomial  of $H$ is $\phi(H, t) = (t + (n-1))(t-1)^{n-1} $.
Observe that $\phi(H(K_n)) = (t - (n-1))(t+1)^{n-1}$ and so its coefficients are
\[
[t^k] \phi(H, t) = \begin{cases} [t^k] \phi(H(K_n), t), & \text{if }n -k \text{ is even;}\\
-[t^k] \phi(H(K_n), t)  & \text{if }n -k \text{ is odd.}  \end{cases}
\]
In particular, $[t^{n-2}] \phi(H, t) = [t^{n-2}] \phi(H(K_n), t)$. Thus, by Corollary \ref{cor:cn-2}, $\Gamma(X)$ has the same number of edges as $K_n$, and so $\Gamma(X) \cong K_n$.

Also, $\tr(H^3) = -\tr(H(K_n)^3)$. By Proposition \ref{prop:traceh3}, we see that $\tr(H(K_n)^3) \allowbreak = 6\binom{n}{3}$. Therefore, $\tr(H^3) = - 6{n \choose 3} $ and so every $3$ vertices of $X$ must induce a digraph isomorphic to the digraph $K_3'$, the only possible triangle with negative weight.

Consider $G(X)$, the symmetric subgraph of $X$. If there is a path $uvw$ of length $2$ in $G(X)$, then $\{u,v,w\}$ induce a triangle of $X$ with more than one digon and hence not isomorphic to $K_3'$, which is a contradiction. Thus, each connected component of $G(X)$ is a copy of either $K_1$ or $K_2$. If $G(X)$ has $3$ or more components, then choosing three vertices from different components will give a triangle with no digons and hence not isomorphic to $K_3'$. Thus, $G(X)$ has at most $2$ components. Since $n\geq3$, we see that $G(X)$ has exactly two components and $n \in \{3,4\}$.
If $n = 3$, then $X \cong K_3'$ since $K_3'$ is an induced sub-digraph.

If $n=4$, then we may assume that $\{x_1,x_2\}$ and $\{x_3,x_4\}$ are digons in $X$. The deletion of any vertex of $X$ results in a digraph isomorphic to $K_3'$. When deleting $x_4$, we may assume that $x_1x_3$ and $x_3x_2\in E(X)$. Now it is easy to see that $x_2x_4\in E(X)$ (consider deleting $x_1$) and $x_4x_1\in E(X)$, giving us $K_4'$.
\end{proof}

\subsection*{Digraphs with a large negative $H$-eigenvalue}
\label{sec:hfam-largerho}

We define a digraph $X(a,b)$ on $2a + b$ vertices, where $a\geq 1$ and $b\geq 1$. The vertices of $X(a,b)$ consist of $X \cup Y \cup Z$, where $X = \{x_1, \ldots, x_a\}$, $Y = \{y_1, \ldots, y_a\}$ and $Z = \{z_1, \ldots, z_b\}$. The arcs are
\[\{x_j y_j, y_j x_j \mid j = 1,\ldots a\}\] and
\[\{x_j z_{\ell}, z_{\ell} y_j \mid j = 1,\ldots,a \text{ and } \ell = 1, \ldots b\}.\]
We see that $K_3'$ from the previous section is isomorphic to $X(1,1)$. Figure \ref{fig:largelambdan} shows $X(1,3)$ and $X(2,3)$.

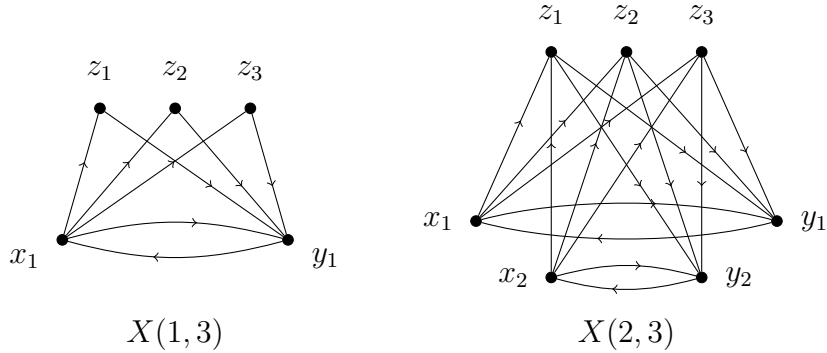
\begin{figure}[ht!]
\centering
\begin{tikzpicture}
	\begin{pgfonlayer}{nodelayer}
		\node [style=new] (0) at (-2, 1.25) {};
		\node [style=new] (1) at (-1, 1.25) {};
		\node [style=new] (2) at (0, 1.25) {};
		\node [style=new] (3) at (-2.5, -0.5) {};
		\node [style=new] (4) at (0.5, -0.5) {};
		\node [style=new] (5) at (4, -1) {};
		\node [style=new] (6) at (3, -0.25) {};
		\node [style=new] (7) at (7, -0.25) {};
		\node [style=new] (8) at (4, 2) {};
		\node [style=new] (9) at (5, 2) {};
		\node [style=new] (10) at (6, 2) {};
		\node [style=new] (11) at (6, -1) {};
		\node [style=n] (12) at (-1, -1.75) {$X(1,3)$};
		\node [style=n] (13) at (5, -1.75) {$X(2,3)$};
		\node [style=n] (14) at (-2, 1.75) {$z_1$};
		\node [style=n] (15) at (-1, 1.75) {$z_2$};
		\node [style=n] (16) at (0, 1.75) {$z_3$};
		\node [style=n] (17) at (4, 2.5) {$z_1$};
		\node [style=n] (18) at (5, 2.5) {$z_2$};
		\node [style=n] (19) at (6, 2.5) {$z_3$};
		\node [style=n] (20) at (-3, -0.75) {$x_1$};
		\node [style=n] (21) at (1, -0.75) {$y_1$};
		\node [style=n] (22) at (2.5, -0.25) {$x_1$};
		\node [style=n] (23) at (3.5, -1) {$x_2$};
		\node [style=n] (24) at (6.5, -1) {$y_2$};
		\node [style=n] (25) at (7.5, -0.25) {$y_1$};
	\end{pgfonlayer}
	\begin{pgfonlayer}{edgelayer}
		\draw [directed, bend left=15] (3) to (4);
		\draw [directed, bend left=15] (4) to (3);
		\draw [directed] (3) to (0);
		\draw [directed] (0) to (4);
		\draw [directed] (3) to (1);
		\draw [directed] (1) to (4);
		\draw [directed] (3) to (2);
		\draw [directed] (2) to (4);
		\draw [directed, bend left=15, looseness=0.75] (6) to (7);
		\draw [directed, bend left=15] (5) to (11);
		\draw [directed, bend left=15, looseness=0.75] (7) to (6);
		\draw [directed, bend left=15] (11) to (5);
		\draw [directed] (6) to (8);
		\draw [directed] (8) to (7);
		\draw [directed] (6) to (9);
		\draw [directed] (9) to (7);
		\draw [directed] (6) to (10);
		\draw [directed] (10) to (7);
		\draw [directed] (5) to (8);
		\draw [directed] (8) to (11);
		\draw [directed] (5) to (9);
		\draw [directed] (9) to (11);
		\draw [directed] (5) to (10);
		\draw [directed] (10) to (11);
	\end{pgfonlayer}
\end{tikzpicture}
\caption{Digraphs $X(1,3)$ and $X(2,3)$, constructed as examples of digraphs with a large negative $H$-eigenvalue. \label{fig:largelambdan}}
\end{figure}

\begin{lemma}
Digraph $X(a,b)$ defined above has $H$-spectrum \[\left\{ \frac{-1 + \sqrt{1+8ab}}{2},\ 1^{(a)},\ 0^{(b-1)},\ -1^{(a-1)},\ \frac{-1 - \sqrt{1+8ab}}{2} \right\}. \]
\end{lemma}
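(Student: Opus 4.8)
The plan is to exploit the symmetry of $X(a,b)$ by writing $H = H(X(a,b))$ in block form with respect to the ordered partition $V(X(a,b)) = X \cup Y \cup Z$ and decomposing $\cx^{2a+b}$ into two mutually orthogonal $H$-invariant subspaces. Ordering the vertices as $x_1,\dots,x_a,y_1,\dots,y_a,z_1,\dots,z_b$, the definition of the Hermitian adjacency matrix gives
\[
H = \pmat{ 0 & I_a & iJ_{a,b} \\ I_a & 0 & -iJ_{a,b} \\ -iJ_{b,a} & iJ_{b,a} & 0 },
\]
where $J_{p,q}$ is the all-ones $p\times q$ matrix: the digons $x_jy_j$ contribute the identity blocks, the arcs $x_jz_\ell$ the $i$'s, and the arcs $z_\ell y_j$ the $-i$'s. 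The partition $\{X,Y,Z\}$ is equitable, so the $3$-dimensional space $U$ spanned by the cell indicators $\ones_X,\ones_Y,\ones_Z$ is $H$-invariant; since $H$ is Hermitian, its orthogonal complement $U^\perp$ (vectors whose coordinates sum to zero on each of the three cells) is automatically invariant as well. I would treat these two pieces separately.

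On $U$ the operator $H$ acts as the quotient matrix
\[
B = \pmat{0 & 1 & ib \\ 1 & 0 & -ib \\ -ia & ia & 0},
\]
which one reads off from Corollary~\ref{cor:interlacing-equitable}, or simply by computing $H\ones_X, H\ones_Y, H\ones_Z$. A short determinant computation gives
\[
\det(tI - B) = t^3 - (2ab+1)t + 2ab = (t-1)\,(t^2 + t - 2ab),
\]
whose roots are $1$ and $\tfrac{-1\pm\sqrt{1+8ab}}{2}$. These are therefore three eigenvalues of $H$, realized by eigenvectors that are constant on each cell.

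On $U^\perp = \cx^X_0 \oplus \cx^Y_0 \oplus \cx^Z_0$ (the sum-zero parts of the three cells) I would read the action of $H$ off the three scalar equations defining $H\Zx$. For a vector in the sum-zero part the quantities $\sum_\ell \zeta_\ell$, $\sum_j \xi_j$ and $\sum_j \eta_j$ all vanish, so the all-ones off-diagonal blocks contribute nothing: $H$ annihilates $\cx^Z_0$ and interchanges $\cx^X_0$ and $\cx^Y_0$ by the identity. Hence on $\cx^X_0\oplus\cx^Y_0$ the matrix is $\bigl(\begin{smallmatrix}0&I\\ I&0\end{smallmatrix}\bigr)$, contributing eigenvalue $1$ on the symmetric part ($\xi=\eta$) and $-1$ on the antisymmetric part ($\xi=-\eta$), each of dimension $a-1$, while $\cx^Z_0$ contributes eigenvalue $0$ with multiplicity $b-1$.

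Collecting the two parts yields eigenvalue $1$ with multiplicity $(a-1)+1 = a$, eigenvalue $-1$ with multiplicity $a-1$, eigenvalue $0$ with multiplicity $b-1$, and the two simple eigenvalues $\tfrac{-1\pm\sqrt{1+8ab}}{2}$; since $1+(a-1)+(a-1)+(b-1)+1 = 2a+b$, this accounts for the entire spectrum, as claimed (the count is correct even as a multiset in the degenerate case $a=b=1$, where $\tfrac{-1+\sqrt{1+8ab}}{2}=1$). The only real work is bookkeeping: keeping the signs of the $i$-entries of $B$ straight and correctly reading off the restricted action of $H$ on $U^\perp$, which is where I would be most careful.
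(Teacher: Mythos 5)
Your proposal is correct: the block form of $H$, the matrix of the restriction to $U=\mathrm{span}(\ones_X,\ones_Y,\ones_Z)$ (which is indeed the quotient matrix $B$, with characteristic polynomial $t^3-(2ab+1)t+2ab=(t-1)(t^2+t-2ab)$), and the computation that $H$ acts on sum-zero vectors by $(\xi,\eta,\zeta)\mapsto(\eta,\xi,0)$ are all accurate, and the multiset union of the two spectra gives exactly the claimed spectrum. Your route shares the paper's key computation -- the equitable partition and the quotient matrix $B$, which is where the two irrational eigenvalues $\tfrac{-1\pm\sqrt{1+8ab}}{2}$ come from -- but the surrounding logic is genuinely different. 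The paper exhibits explicit eigenvectors: it uses the $b$ identical rows of $H$ to get nullity at least $b-1$, the vectors $\Zv_j=(\Ze_j\ \Ze_j\ \nils)^T$ for eigenvalue $1$ and $\Zw_j$ (differences of the $\Zv_j$-type vectors with a sign flip) for eigenvalue $-1$, and then invokes Corollary \ref{cor:interlacing-equitable} to add $\tau$ and $\sigma$, finishing with a counting/trace argument that requires $\tau$ and $\sigma$ to be distinct from the eigenvalues already found. Your argument instead block-diagonalizes $H$ over the orthogonal invariant decomposition $\cx^{2a+b}=U\oplus U^\perp$, so it is exhaustive by construction: no multiplicity bookkeeping, no distinctness hypothesis, and no trace step is needed. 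This buys you something concrete: in the degenerate case $a=b=1$ one has $\tau=1$, so the paper's assertion that ``$\tau$ and $\sigma$ are not equal to any of the eigenvalues of $H$ that we have already found'' actually fails there (the final spectrum is still correct, but the stated justification is not), whereas your decomposition handles that case with no modification, as you note. The trade-off is that the paper's proof is more elementary in flavor -- it only ever multiplies $H$ against explicit vectors -- while yours leans on the (standard, and correct) facts that an equitable partition makes $U$ invariant and that the orthogonal complement of an invariant subspace of a Hermitian operator is again invariant.
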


\begin{proof}
Let $H = H(X(a,b))$. We may write $H$ in the following form:
\[
H = \pmat{ \nils & I_a & i J_{a,b} \\ I_a & \nils & -i J_{a,b} \\ -i J_{b,a} & i J_{b,a} & \nils}
\]
where we recall that $I_n$ denotes the $n\times n$ identity matrix and $J_{m,n}$ denotes the $m\times n$ all-ones matrix.

Observe that the last $b$ rows are all identical and hence linearly dependent. This shows that $\rk(H) \leq 2a + b -(b-1)$, which implies that $H$ has $0$ as an eigenvalue with multiplicity at least $b-1$.

For $j = 1,\ldots, a$, let $\Zv_j = (\Ze_j \quad \Ze_j \quad \nils )^T$, where $\Ze_j$ is the $a$-dimensional $j$-th elementary vector. We see that $H \Zv_j = \Zv_j$ for $j = 1,\ldots, a$ and so $1$ is an eigenvalue of $H$ with multiplicity at least $a$.

Similarly, for $j = 1,\ldots, a-1$, let
\[
\Zw_j = \pmat{ \Ze_j - \Ze_a \\ -(\Ze_j - \Ze_a) \\ \nils }
\]
where $\Ze_n$ is defined as above. Then $H\Zw_j = - \Zw_j$ and so $-1$ is an eigenvalue of $H$ with multiplicity at least $a-1$.

We have found $2a + b -2$ eigenvalues of $H$. To find the remaining two eigenvalues, we will use the interlacing theorem. Partition the vertices of $X(a,b)$ (and consequently the rows and columns of $H$) into the sets $X$, $Y$ and $Z$. Each block of $H$, under this partition, has constant row sums and so this is an equitable partition of $H$. We obtain $B$, the quotient matrix corresponding to this partition as follows:
\[
B = \pmat{0 & 1 & ib \\ 1 & 0 & -ib \\ -ia & ia & 0}.
\]
We find that the characteristic polynomial of $B$ is
\[ \phi(B, t) = t^3 - (2ab + 1)t + 2ab = (t-1)(t^2 + t - 2ab)
\]
whose roots are $1$, $\tau = -\frac{1}{2} + \frac{1}{2}\sqrt{1 + 8ab}$ and
$\sigma = -\frac{1}{2} - \frac{1}{2}\sqrt{1 + 8ab}$.
The partition is equitable and so $\tau$ and $\sigma$ are also eigenvalues of $H$, by Corollary \ref{cor:interlacing-equitable}. Since for $a,b \geq 1$, $\tau$ and $\sigma$ are not equal to any of the eigenvalues of $H$ that we have already found. The trace formula gives that the last eigenvalue is another $1$. Thus, $H$ has spectrum $\{\tau, 1^{(a)}, 0^{(b-1)}, -1^{(a-1)}, \sigma\}$.
\end{proof}

In the sequel we will need a formula for eigenvalues of Cartesian products of digraphs.
Let $X$ and $Y$ be digraphs. The \emph{Cartesian product} of $X$ and $Y$, denoted by $X \Box Y$, is the graph with vertex set $V(X) \times V(Y)$ such that there is an arc from $(x_1, y_1)$ to $(x_2, y_2)$ when either $x_1x_2$ is an arc of $X$  and $y_1 = y_2$ or $y_1y_2$ is an arc of $Y$ and $x_1 = x_2$. The Hermitian adjacency matrix of $X \Box Y$ is
\[
H(X \Box Y) = H(X) \otimes I_{|V(Y)|} + I_{|V(X)|} \otimes H(Y)
\]
where $I_k$ is the $k\times k$ identity matrix. For definitions of Kronecker products of matrices and vectors, see \cite{CDS95}.

\begin{proposition}
\label{prop:hfam-prod}
If $X$ and $Y$ are digraphs with $H$-eigenvalues $\{\lambda_j\}_{j=1}^n$ and $\{\mu_k\}_{k=1}^m$ respectively, then $X\Box Y$ has $H$-eigenvalues $\lambda_j + \mu_k$ for $j = 1,\ldots, n$ and $k = 1,\ldots, m$.
\end{proposition}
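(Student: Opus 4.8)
The plan is to exploit the Kronecker (tensor) structure of $H(X\Box Y)$ recorded just above, namely
\[
H(X\Box Y) = H(X)\otimes I_m + I_n \otimes H(Y),
\]
where $n = |V(X)|$ and $m = |V(Y)|$. The eigenvalues of such a Kronecker sum are exactly the pairwise sums of the eigenvalues of the two summands, and the whole argument reduces to verifying this cleanly while keeping track of multiplicities. The driving tool is the mixed-product rule for Kronecker products, $(A\otimes B)(\Zu\otimes\Zv) = (A\Zu)\otimes(B\Zv)$, together with the fact that a Kronecker product of orthonormal bases is again an orthonormal basis.

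First I would invoke Proposition \ref{prop:real}: since $H(X)$ and $H(Y)$ are Hermitian, there is an orthonormal basis $\Zx_1,\dots,\Zx_n$ of $\cx^n$ with $H(X)\Zx_j = \lambda_j \Zx_j$, and an orthonormal basis $\Zy_1,\dots,\Zy_m$ of $\cx^m$ with $H(Y)\Zy_k = \mu_k \Zy_k$. Next I would check that each $\Zx_j\otimes\Zy_k$ is an eigenvector of $H(X\Box Y)$: applying the mixed-product rule to each summand gives
\[
H(X\Box Y)(\Zx_j\otimes\Zy_k) = \bigl(H(X)\Zx_j\bigr)\otimes\Zy_k + \Zx_j\otimes\bigl(H(Y)\Zy_k\bigr) = (\lambda_j+\mu_k)(\Zx_j\otimes\Zy_k).
\]

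Finally I would argue that these eigenvectors exhaust the spectrum. The $nm$ vectors $\{\Zx_j\otimes\Zy_k\}$ are mutually orthonormal, since $\ip{\Zx_j\otimes\Zy_k}{\Zx_{j'}\otimes\Zy_{k'}} = \ip{\Zx_j}{\Zx_{j'}}\,\ip{\Zy_k}{\Zy_{k'}}$, and therefore form an orthonormal basis of $\cx^{nm}$. This is a full eigenbasis of $H(X\Box Y)$, so the eigenvalues of $H(X\Box Y)$, counted with multiplicity, are precisely the numbers $\lambda_j+\mu_k$ for $1\le j\le n$ and $1\le k\le m$, as claimed. There is no genuine obstacle here; the only point requiring care is the bookkeeping of multiplicities, which the orthonormal-basis argument settles automatically, ensuring that no eigenvalue is over- or under-counted even when some sums $\lambda_j+\mu_k$ coincide.
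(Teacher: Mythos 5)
Your proof is correct and follows essentially the same route as the paper's: both take orthonormal eigenbases of $H(X)$ and $H(Y)$, observe that the Kronecker products of the basis vectors form an orthonormal basis of $\cx^{nm}$, and apply the mixed-product rule to the decomposition $H(X\Box Y)=H(X)\otimes I_m + I_n\otimes H(Y)$ to read off the eigenvalues $\lambda_j+\mu_k$. Your explicit remark about multiplicity bookkeeping is a nice touch but does not change the substance of the argument.
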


\begin{proof}
Let $\Zv_1,\ldots, \Zv_n$ be an orthonormal eigenbasis of $H(X)$ such that $H(X)\Zv_j = \lambda_j \Zv_j$, for $j = 1,\ldots, n$. Let $\Zw_1,\ldots, \Zw_n$ be an orthonormal eigenbasis of $H(Y)$ such that $H(Y)\Zw_k = \mu_k \Zw_k$, for $k = 1,\ldots, m$. Observe that the vectors $\{\Zv_j \otimes \Zw_k \mid j \in \{1,\ldots, n\}, k \in \{1,\ldots, m\}\}$ form an orthonormal basis of $\cx^{nm}$. We see that
\[
\begin{split}
H(X \Box Y) \Zv_j \otimes \Zw_k &= \left(H(X) \otimes I_{m}\right)(\Zv_j \otimes \Zw_k) + \left(I_{n} \otimes H(Y)\right)( \Zv_j \otimes \Zw_k)\\
&= H(X)\Zv_j \otimes I_{m}\Zw_k + I_{n}\Zv_j \otimes H(Y)\Zw_k\\
&= \lambda_j\left(\Zv_j \otimes \Zw_k\right) + \mu_k \left(\Zv_j \otimes \Zw_k\right)\\
&= (\lambda_j + \mu_k)\left(\Zv_j \otimes \Zw_k\right)
\end{split} \]
for every $j \in \{1,\ldots, n\}$ and $k \in \{1,\ldots, m\}$.
\end{proof}

Note that $X(a,b)$ has $\rho(X(a,b)) - \lambda_1(X(a,b)) = 1$. We now use the Cartesian product to construct digraphs where this difference is much larger. We let $X^{\Box n}$ denote the $n$-fold Cartesian product of $X$ with itself; that is $X^{\Box n} = X \Box \cdots \Box X$, where there are $n$ terms in the product.

\begin{proposition}
\label{prop:3n}
The digraph $X_n = K_4'^{\,\Box n}$ has $\rho(X_n)=3n$ and $\la_1(X_n)=n$.
\end{proposition}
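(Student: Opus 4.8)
The plan is to apply Proposition \ref{prop:hfam-prod} inductively to determine the full $H$-spectrum of $X_n$, and then simply read off $\la_1$ and $\rho$. Recall from the discussion preceding Figure \ref{fig:k3k4prime} that $K_4'$ has $H$-spectrum $\{-3, 1^{(3)}\}$, so the only $H$-eigenvalues contributed by a single factor are $-3$ and $1$.

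First I would promote Proposition \ref{prop:hfam-prod} from a product of two digraphs to the $n$-fold product. Since the Cartesian product is associative and $X_n = K_4' \,\Box\, X_{n-1}$, a routine induction shows that every $H$-eigenvalue of $X_n$ is a sum $\mu_1 + \cdots + \mu_n$, where each $\mu_t \in \{-3, 1\}$ is an $H$-eigenvalue of one copy of $K_4'$. The multiplicities are governed by binomial counts, but these play no role in the present claim and I would not track them.

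Next I would parametrize the eigenvalues obtained this way. If exactly $k$ of the summands equal $-3$ and the remaining $n-k$ equal $1$ (for $0 \le k \le n$), the corresponding eigenvalue is
\[
-3k + (n-k) = n - 4k .
\]
Hence the set of distinct $H$-eigenvalues of $X_n$ is precisely $\{\, n - 4k : 0 \le k \le n \,\}$. The largest occurs at $k=0$, giving $\la_1(X_n) = n$, and the most negative occurs at $k=n$, giving $n - 4n = -3n$.

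Finally I would compare magnitudes to identify the spectral radius. Since $|-3n| = 3n > n$ for every $n \ge 1$, the spectral radius is attained by the extreme \emph{negative} eigenvalue rather than by $\la_1$, so $\rho(X_n) = 3n$, as claimed. I do not anticipate any genuine obstacle: the whole argument is a direct spectral computation once the two-factor product formula is extended to $n$ factors. The only point meriting a moment's care is precisely the inequality $3n > n$, which ensures that $\rho$ is realized away from $\la_1$---this being exactly the phenomenon (spectral radius strictly exceeding the largest eigenvalue) that the example $X_n$ is constructed to exhibit.
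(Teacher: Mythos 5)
Your proposal is correct and follows essentially the same route as the paper: iterate Proposition \ref{prop:hfam-prod} to see that every $H$-eigenvalue of $X_n$ is a sum of $n$ terms from $\{-3,1\}$, then read off the extremes $n$ and $-3n$. Your explicit parametrization $n-4k$ is in fact slightly more precise than the paper's listing of the eigenvalue set, but the conclusion $\rho(X_n)=3n$, $\la_1(X_n)=n$ is reached identically.
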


\begin{proof}
By applying Proposition \ref{prop:hfam-prod} $n$ times, the $H$-eigenvalues of $X_n$ are
\[ \Bigl\{ \sum_{j = 1}^n \beta_j \mid \beta_j \in \{-3, 1\} \Bigr\} = \{-3n, -3n+2, -3n+4, \dots, n-4,n-2,n\}.
\]
Thus $\rho(X_n) = 3n$ and $\lambda_1(X_n) = n$.
\end{proof}

The importance of the above examples is that they exhibit the extreme behavior as evidenced by our next result.

\begin{theorem}
\label{thm:max negative rho}
For every digraph $X$ we have
$$
    \la_1(X) \le \rho(X) \le 3\la_1(X).
$$
Both inequalities are tight.
\end{theorem}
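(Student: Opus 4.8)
The plan is to dispatch the left inequality at once and to obtain the right one from an additive splitting of $H=H(X)$ into its digon part and its oriented part. Since $\tr(H)=0$ (Proposition \ref{prop:traceh3}(i)) we have $\lambda_1(X)\ge 0$, and because $\rho(X)=\max_j|\lambda_j|\ge|\lambda_1(X)|\ge\lambda_1(X)$, the inequality $\lambda_1(X)\le\rho(X)$ is automatic; this half is already tight for undirected graphs, where $\rho=\lambda_1$.

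For $\rho(X)\le 3\lambda_1(X)$ I would write $H=A(G(X))+H(D(X))$, using the symmetric subgraph $G(X)$ and the asymmetric subdigraph $D(X)$ defined above. Here $A:=A(G(X))$ is a real nonnegative symmetric matrix (the ordinary adjacency matrix of the digon graph), while $M:=H(D(X))$ is the Hermitian matrix of an oriented graph, so $M=iB$ for some real skew-symmetric $B$. Set $p=\rho(A)=\lambda_1(A)\ge 0$ and $q=\rho(M)$; since the $H$-spectrum of an oriented graph is symmetric about $0$ (Theorem \ref{thm:h-ori}), we also have $q=\lambda_1(M)$. As $\rho(\cdot)$ is the operator $2$-norm on Hermitian matrices, the triangle inequality yields $\rho(X)=\|H\|_2\le\|A\|_2+\|M\|_2=p+q$.

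It then remains to bound $\lambda_1(X)$ below by two Rayleigh quotients chosen so that the weights reproduce the constant $3$. First I would use a Perron eigenvector $\mathbf u\ge\nils$ of $A$; being real it satisfies $\mathbf u^*M\mathbf u=i\,\mathbf u^{T}B\mathbf u=0$ (as $B$ is skew-symmetric), so \eqref{eq:minmax jth largest} gives $\lambda_1(X)\ge \mathbf u^*H\mathbf u/\|\mathbf u\|^2=p$. Second, a unit top eigenvector $\mathbf w$ of $M$, together with $\mathbf w^*A\mathbf w\ge\lambda_n(A)\ge-p$, gives $\lambda_1(X)\ge\mathbf w^*H\mathbf w=\mathbf w^*A\mathbf w+q\ge q-p$. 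Combining the two, $3\lambda_1(X)\ge 2p+(q-p)=p+q\ge\rho(X)$, and tightness on this side is witnessed by $X_n=K_4'^{\,\Box n}$ from Proposition \ref{prop:3n}, for which $\rho=3n=3\lambda_1$.

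The one genuinely clever move is the splitting $H=A+M$ together with the choice of exactly these two trial vectors, weighted as $2p+(q-p)$ so that the factor $3$ emerges. I expect the main pitfall to be the tempting alternative of manipulating an eigenvector for $\lambda_n(X)=-\rho(X)$ directly via phase rotations through the fourth roots of unity, which is messier and harder to control. The two facts I would verify most carefully are that the purely imaginary form $M$ annihilates every real vector (so the $A$-Perron vector isolates $p$) and that $q=\lambda_1(M)=\rho(M)$ by symmetry of the oriented-graph spectrum; with these in hand the computation is short.
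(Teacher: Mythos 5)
Your proof is correct, and your tightness examples match the paper's (undirected graphs for the left inequality, $K_4'^{\,\Box n}$ from Proposition \ref{prop:3n} for the right). The two core ingredients also coincide with the paper's: the splitting $H=A+L$ into the digon part and the oriented part, and the symmetry of the oriented spectrum (Theorem \ref{thm:h-ori}). What differs, and is worth noting, is the logical organization. The paper first proves $\la_1(A)\le\la_1(H)$ using real test vectors (your Perron-vector step is the same computation), then argues by cases: assuming $\la_1(H)\le\frac13\rho(X)$, it evaluates the quadratic form at an eigenvector $\Zw$ for $\la_n(H)=-\rho(H)$ to deduce $|\Zw^*L\Zw|\ge\frac23\rho(H)$, and then must invoke the symmetry of the spectrum of $L$ to produce a unit vector $\Zy$ with $\Zy^*L\Zy=|\Zw^*L\Zw|$ --- a numerical-range transfer step --- before concluding $\la_1(H)\ge\Zy^*H\Zy\ge\frac13\rho(H)$. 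You dispense with both the case split and that transfer: the operator-norm triangle inequality $\rho(H)\le\rho(A)+\rho(L)=p+q$, combined with the two Rayleigh bounds $\la_1\ge p$ and $\la_1\ge q-p$ (the latter is where Theorem \ref{thm:h-ori} enters for you, via $\la_1(L)=\rho(L)$), gives $3\la_1\ge 2p+(q-p)=p+q\ge\rho$ directly, using only genuine eigenvectors of $A$ and of $L$. Amusingly, the ``pitfall'' you warned against --- working with an eigenvector for $\la_n(X)=-\rho(X)$ --- is precisely the paper's route, and your version shows it can be avoided. One small simplification: you never need nonnegativity of the Perron vector, only a real unit $\la_1(A)$-eigenvector; Perron--Frobenius is needed, by both proofs, only for the identity $\rho(A)=\la_1(A)$, equivalently $\la_n(A)\ge-\la_1(A)$.
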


\begin{proof}
The first inequality is clear by the definition of the spectral radius. Tightness is also clear by Proposition \ref{prop:3n}. To prove the second inequality, let $A$ be the 01-matrix corresponding to all digons in $X$, and let $L$ be the matrix corresponding to non-digons, so that $H=H(X)=A+L$.

It is easy to see that for every $\Zx\in \re^V$ ($V=V(X)$), we have $\Zx^TL\Zx=0$. Therefore, $\Zx^*H\Zx = \Zx^TA\Zx$.
By using the min-max formula (\ref{eq:minmax jth largest}) for the largest and smallest eigenvalues of $H$ and $L$ and noting that the same formula applies to the matrix $A$, in which case we need to consider only real vectors, we obtain the following inequality:
$$
   \la_1(A) = \max_{\Zx\in \re^V, \Vert \Zx\Vert=1} \Zx^TA\Zx = \max_{\Zx\in \re^V} \Zx^*H\Zx \le \max_{\Zz\in \cx^V, \Vert \Zz\Vert=1} \Zz^*H\Zz = \la_1(H).
$$
We are done if $\la_1(H) \ge \frac{1}{3}\rho(X)$. Thus, we may assume that $\la_1(A)\le\la_1(H)\le\frac{1}{3}\rho(X)$.
Now,
$$
   \rho(H)=\la_n(H)=|\min_{\Zz\in \cx^V, \Vert \Zz\Vert=1} \Zz^*H\Zz|.
$$
Suppose that the minimum (which is negative) is attained by a vector $\Zw\in \cx^V$ with $\Vert \Zw\Vert=1$.
Then
$$
  \rho(H) = |\Zw^*H\Zw| \le |\Zw^*L\Zw| + |\Zw^*A\Zw| \le |\Zw^*L\Zw| + \rho(A) \le |\Zw^*L\Zw| + \frac{1}{3}\rho(H).
$$
This implies that $|\Zw^*L\Zw|\ge \frac{2}{3}\rho(H)$. By \cite[Corollary 2.13]{LiuLi15}, see also Theorem \ref{thm:h-ori}, the spectrum of $L$ is symmetric about 0. Therefore there exists $\Zy\in \cx^V$ with $\Vert \Zy\Vert=1$ such that $\Zy^*L\Zy = |\Zw^*L\Zw|$.
Now,
$$
   \la_1(H) = \max_{\Zz\in \cx^V, \Vert \Zz\Vert=1} \Zz^*H\Zz \ge \Zy^*H\Zy = \Zy^*L\Zy + \Zy^*A\Zy \ge \frac{2}{3}\rho(H) - \rho(A) \ge
   \frac{1}{3}\rho(H).
$$
This completes the proof.
\end{proof}

It follows from the above proof that in the case of equality in the upper bound, the graph corresponding to the digons of $X$ is bipartite, its spectral radius is equal to $\frac{1}{3}\rho(X)$, and $\rho(L)=\frac{2}{3}\rho(X)$.

To end up this section we show that the spectral radius of $\Gamma(X)$ always majorizes $\rho(X)$.

\begin{theorem}
\label{thm:sp radius undirected majorizes}
For every digraph $X$ (with multiple edges allowed), $\rho(X)\le \rho(\Gamma(X))$.
\end{theorem}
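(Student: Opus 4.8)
The key point is the entrywise comparison between $H=H(X)$ and the adjacency matrix $A=A(\Gamma(X))$ of the underlying graph: every entry of $H$ satisfies $|H_{uv}|\le A_{uv}$, with equality when $X$ is a simple digraph (there each $H_{uv}\in\{0,1,\pm i\}$, and it is nonzero exactly when $uv$ is an edge of $\Gamma(X)$). The plan is to turn this into a spectral bound by a Rayleigh-quotient argument. Since $H$ is Hermitian, the min-max formula (\ref{eq:minmax jth largest}) gives $\la_1(H)=\max_{\Vert\Zz\Vert=1}\Zz^*H\Zz$ and $\la_n(H)=\min_{\Vert\Zz\Vert=1}\Zz^*H\Zz$; as $\rho(X)$ is the largest eigenvalue in absolute value, this yields $\rho(X)=\rho(H)=\max_{\Vert\Zz\Vert=1}|\Zz^*H\Zz|$.

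First I would take a unit vector $\Zx\in\cx^V$ attaining $\rho(H)=|\Zx^*H\Zx|$ and pass to the real nonnegative vector $\Zy$ defined by $\Zy(v)=|\Zx(v)|$, which again has norm $1$. The triangle inequality for complex sums then gives
\[
\rho(X)=|\Zx^*H\Zx|=\Bigl|\sum_{u,v}\overline{\Zx(u)}\,H_{uv}\,\Zx(v)\Bigr|\le\sum_{u,v}\Zy(u)\,|H_{uv}|\,\Zy(v)\le\sum_{u,v}\Zy(u)\,A_{uv}\,\Zy(v)=\Zy^TA\Zy,
\]
where the final inequality uses the entrywise bound $|H_{uv}|\le A_{uv}$ together with $\Zy\ge\nils$. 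To finish, note that $A$ is a symmetric nonnegative matrix, so by Perron--Frobenius its spectral radius equals its largest eigenvalue, $\rho(A)=\la_1(A)=\rho(\Gamma(X))$. Hence the Rayleigh quotient satisfies $\Zy^TA\Zy\le\la_1(A)\Vert\Zy\Vert^2=\rho(\Gamma(X))$, and combining the displays yields $\rho(X)\le\rho(\Gamma(X))$. Equivalently, one may simply quote the standard matrix-analytic fact that $|B|\le C$ entrywise with $C$ nonnegative forces $\rho(B)\le\rho(C)$ (see \cite{HoJo13}), applied to $B=H$ and $C=A$.

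I expect the only real care to be needed in verifying the entrywise domination $|H_{uv}|\le A_{uv}$ in the multigraph setting, rather than in the analytic core, which is completely standard. For a multigraph without digons one has $H_{uv}=a+(b-c)i$ with $b=0$ or $c=0$, so $|H_{uv}|=\sqrt{a^2+(b-c)^2}\le a+b+c=A_{uv}$, while digons contribute a real entry exactly as in the simple case; thus the bound persists once multiplicities are counted correctly in $\Gamma(X)$. The remainder of the argument is insensitive to multiplicities, so no further modification is required.
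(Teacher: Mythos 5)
Your proof is correct and is essentially the paper's own argument: both pass from a unit vector $\Zx$ realizing $\rho(X)$ (an extremal eigenvector, equivalently a maximizer of $|\Zz^*H\Zz|$) to the nonnegative vector $\Zy(v)=|\Zx(v)|$, apply the triangle inequality to get $\rho(X)\le \Zy^TA(\Gamma(X))\Zy$, and then bound this Rayleigh quotient by $\rho(\Gamma(X))$. Your explicit verification of the entrywise domination $|H_{uv}|\le A_{uv}$ in the multigraph case simply spells out what the paper leaves implicit.
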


\begin{proof}
Let $q=\pm\rho(X)$ be an eigenvalue of $X$, and let $\Zx$ be a unit eigenvector of $H(X)$ for its eigenvalue $q$. Define $\Zy$ by setting $\Zy(v)=|\Zx(v)|$, $v\in V(X)$. Then it is easy to see by using the triangular inequality that $\Zy^*A(\Gamma(X))\Zy \ge |\Zx^*H(X)\Zx| = \rho(X)$. Since $\Zy$ has norm 1, this implies that $\rho(\Gamma(X)) \ge \Zy^*A(\Gamma(X))\Zy \ge \rho(X)$, which we were to prove.
\end{proof}

\section{$H$-Eigenvalues symmetric about $0$}\label{sec:h-bip}

For a digraph $X$, its $A$-eigenvalues are symmetric about $0$ if and only if $\Gamma(X)$ is bipartite. We may also consider digraphs $X$ whose $H$-eigenvalues are symmetric about $0$. Note that the eigenvalues of $H$ are real and can be ordered as $\lambda_1 \geq \cdots \geq \lambda_n$, they are symmetric about $0$ if and only if $\lambda_j = - \lambda_{n-j+1}$ for $j = 1,\ldots, n$. The following proposition appears in \cite{LiuLi15}.

\begin{proposition}[\cite{LiuLi15}]
\label{prop:h-bip}
For a digraph $X$, if $\Gamma(X)$ is bipartite, then the $H$-eigenvalues are symmetric about $0$.
\end{proposition}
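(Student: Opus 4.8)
The plan is to mimic the classical argument that bipartite graphs have a symmetric spectrum, and to check that the only genuinely new feature — the complex, merely Hermitian (rather than real symmetric) off-diagonal block — does not obstruct it. Concretely, I would produce a real diagonal signing $D$ with $D^2=I$ that conjugates $H=H(X)$ to $-H$; such a similarity immediately forces the spectrum to be symmetric about $0$.

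First I would exploit bipartiteness of $\Gamma(X)$. Write $V(X)=P\cup Q$ for the bipartition, so that every edge of $\Gamma(X)$ joins $P$ to $Q$. By the definition of $H$, the entry $H_{uv}$ is nonzero only when $uv$ or $vu$ is an arc, i.e.\ only when $\{u,v\}$ is an edge of $\Gamma(X)$. Hence, ordering the vertices so that those of $P$ come first, $H$ acquires the block form
\[
H = \pmat{ \nils & B \\ B^* & \nils },
\]
where $B$ is the $|P|\times|Q|$ block; the diagonal blocks vanish because there are no edges inside $P$ or inside $Q$, and the lower-left block is $B^*$ precisely because $H$ is Hermitian.

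Next I would introduce $D=\diag(I_{|P|},\,-I_{|Q|})$, a real diagonal matrix with $D=D^{-1}=D^*$. A direct block computation gives
\[
D H D = \pmat{ \nils & -B \\ -B^* & \nils } = -H,
\]
so $HD=-DH$. Consequently, if $H\Zx=\la\Zx$ with $\Zx\neq\nils$, then
\[
H(D\Zx) = (HD)\Zx = -D(H\Zx) = -\la\,(D\Zx),
\]
and $D\Zx\neq\nils$ since $D$ is invertible. Thus $\Zx\mapsto D\Zx$ is an invertible linear map carrying the $\la$-eigenspace of $H$ onto its $(-\la)$-eigenspace, so these eigenspaces have equal dimension for every $\la$. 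Ordering the eigenvalues decreasingly as $\la_1\ge\cdots\ge\la_n$, this equality of multiplicities is exactly the statement $\la_j=-\la_{n-j+1}$, which is the asserted symmetry about $0$.

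I do not expect a real obstacle here: the entire content is the observation that conjugation by the signed diagonal matrix flips the sign of $H$ regardless of whether the off-diagonal block is real symmetric or complex Hermitian, so the standard proof transfers verbatim. As a sanity check (and an alternative route), one could instead invoke Theorem~\ref{thm:hcp}: when $\Gamma(X)$ is bipartite it has no odd cycles, so every basic subgraph has even order; hence there are no basic subgraphs of order $n-j$ when $n-j$ is odd, forcing $c_j=0$ for all such $j$ and thus $\phi(X,t)=(-1)^n\phi(X,-t)$, which again yields symmetry about $0$.
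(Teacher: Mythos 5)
Your proof is correct, but it takes a different route from the one the paper has in mind. The paper does not prove Proposition \ref{prop:h-bip} itself (it is quoted from Liu and Li); instead it frames the proposition, together with Theorem \ref{thm:h-ori}, as a special case of Theorem \ref{thm:h-symmetric0}, which is obtained from the Sachs-type coefficient expansion of Theorem \ref{thm:hcp}: bipartiteness leaves no odd cycles, hence no basic subgraphs of odd order, hence $c_j=0$ whenever $n-j$ is odd, so $\phi(X,-t)=(-1)^n\phi(X,t)$ and the spectrum is symmetric. That is precisely your ``alternative route,'' so your sanity check recovers the paper's implied argument. Your primary argument --- conjugating $H$ by the signature matrix $D=\diag(I_{|P|},\,-I_{|Q|})$ to obtain $-H$ --- is the classical bipartite signing proof transported verbatim to the Hermitian setting, and it is sound: the diagonal blocks vanish exactly because $\Gamma(X)$ has no edges inside a part, and the conjugation computation and the eigenspace correspondence $\Zx\mapsto D\Zx$ are both correct. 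Your route is more elementary (it bypasses the determinant expansion behind Theorem \ref{thm:hcp}) and gives slightly more, namely an explicit invertible map between the $\lambda$- and $(-\lambda)$-eigenspaces, hence equality of multiplicities eigenvalue by eigenvalue. What the coefficient route buys instead is generality: it extends with no extra work to Theorem \ref{thm:h-symmetric0} (every odd cycle contains an even number of digons), a situation in which a real diagonal signing conjugating $H$ to $-H$ need not exist, so your similarity argument does not obviously generalize in that direction.
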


\begin{figure}[ht!]
\begin{center}
\begin{tikzpicture}
	\begin{pgfonlayer}{nodelayer}
		\node [style=new] (0) at (0, 1) {};
		\node [style=new] (1) at (-1, -0.5) {};
		\node [style=new] (2) at (1, -0.5) {};
	\end{pgfonlayer}
	\begin{pgfonlayer}{edgelayer}
		\draw [directed] (1) to (0);
		\draw [directed] (0) to (2);
		\draw [directed] (1) to (2);
	\end{pgfonlayer}
\end{tikzpicture}
\end{center}
\caption{Digraph $\widetilde{C_3}$ with $H$-eigenvalues symmetric about $0$ whose underlying graph is not bipartite. \label{fig:hsymnotbip}}
\end{figure}
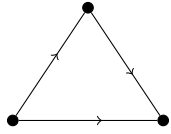

The converse to Proposition \ref{prop:h-bip} is not true. For example, the digraph $\widetilde{C_3}$ in Figure \ref{fig:hsymnotbip} has eigenvalues $\pm \sqrt{3}, 0$.
In fact, every oriented graph has $H$-eigenvalues symmetric about $0$. This was proved in \cite[Corollary 2.13]{LiuLi15}; see also \cite{CaCietal12}. Here we give a different and simpler proof.

\begin{theorem}[\cite{LiuLi15}]
\label{thm:h-ori}
If $X$ is an oriented graph, then the $H$-spectrum of $X$ is symmetric about $0$.
\end{theorem}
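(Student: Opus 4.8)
The claim is that an oriented graph $X$ (a digraph with no digons) has $H$-spectrum symmetric about $0$. The key feature of an oriented graph is that every nonzero off-diagonal entry of $H$ is $\pm i$, and the diagonal is zero.

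The plan is to exhibit an explicit transformation of $H$ that negates the spectrum. The cleanest approach is to find a diagonal unitary matrix $U$ such that $U^* H U = -H$, since this immediately gives that $H$ and $-H$ are similar and hence have the same spectrum, which forces the spectrum to be symmetric about $0$.

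First I would look for $U$ of the form $U = \diag(u_v)_{v \in V(X)}$ with $|u_v| = 1$. For such a $U$, the $(u,v)$-entry of $U^* H U$ is $\comp{u_u}\, H_{uv}\, u_v$. For an oriented graph, whenever $H_{uv} \ne 0$ we have $H_{uv} = \pm i$, so I want to choose the $u_v$ so that $\comp{u_u}\, u_v = -1$ precisely on the support of $H$. The natural candidate is $u_v = i^{\,\ell(v)}$ where $\ell$ is some integer-valued labeling, but the obstruction is that requiring $\comp{u_u} u_v = -1$ on every edge is a system of constraints on the underlying graph that need not be consistent unless $\Gamma(X)$ is bipartite — which is exactly the case already handled by Proposition \ref{prop:h-bip}. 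So a single diagonal unitary will not work in general; this is the main obstacle.

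To get around this, I would instead use the specific structure $H_{uv} \in \{0, \pm i\}$ more directly. Observe that multiplying $H$ entrywise is not what negates it; rather, consider the \emph{transpose} or \emph{conjugate}. Since $H$ is Hermitian with purely imaginary off-diagonal entries and zero diagonal, $H$ is of the form $iS$ where $S$ is a real skew-symmetric matrix (here $S_{uv} = 1$ if $uv$ is an arc, $-1$ if $vu$ is an arc, $0$ otherwise). Then $\comp{H} = -iS = -H$, so $H$ and $-H$ are complex conjugates of each other. The key step is that for any real-entried matrix operation, complex conjugation preserves the (real) spectrum: if $H\Zx = \la \Zx$ with $\la$ real, then taking complex conjugates gives $\comp{H}\,\comp{\Zx} = \la\,\comp{\Zx}$, i.e. $-H\,\comp{\Zx} = \la \comp{\Zx}$, so $H\,\comp{\Zx} = -\la\,\comp{\Zx}$. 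Thus $-\la$ is an eigenvalue of $H$ whenever $\la$ is, and the map $\Zx \mapsto \comp{\Zx}$ is an antilinear bijection between the $\la$-eigenspace and the $(-\la)$-eigenspace, so it also matches multiplicities. This establishes that the spectrum is symmetric about $0$, and I expect this conjugation argument — rather than any diagonal-unitary construction — to be the crux, with the only thing to verify being that $\comp{H} = -H$ holds exactly because there are no digons (so no real $\pm 1$ entries survive to spoil the identity).
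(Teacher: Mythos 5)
Your argument is correct and is essentially the paper's own proof: the paper observes that $iH$ is a real skew-symmetric matrix (entries $\pm 1$), so its characteristic polynomial has real coefficients and its purely imaginary eigenvalues come in conjugate pairs --- which is precisely your identity $\comp{H} = -H$, exploited at the level of the characteristic polynomial rather than via conjugation of eigenvectors. The opening diagonal-unitary discussion is a harmless detour; the crux in both proofs is the same observation that the absence of digons forces every nonzero entry of $H$ to be $\pm i$, i.e.\ $H$ is $i$ times a real skew-symmetric matrix.
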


\begin{proof}
Let $X$ be an oriented graph on $n$ vertices and $H =H(X)$. Let the eigenvalues of $H$ be $\lambda_1 \geq \lambda_2 \geq \ldots \geq \lambda_n$. The matrix $iH$ is skew-symmetric with purely imaginary eigenvalues $i\lambda_1, \ldots, i\lambda_n$. Since $iH$ has entries $\pm 1$, its characteristic polynomial has real coefficients. Thus, every eigenvalue $\mu$ of $iH$ occurs with the same multiplicity as its complex conjugate. This implies that the spectrum of $H$ is symmetric about $0$.
\end{proof}

There are digraphs with $H$-eigenvalues symmetric about $0$, which are neither oriented nor have bipartite underlying graphs. Computationally, we verified that there are no such digraphs on fewer than $4$ vertices. For digraphs of order 4, we found out, using computer, that there are exactly seven $H$-cospectral classes with $H$-spectrum symmetric about $0$. They contain digraphs that are not oriented and their underlying graph needs not be bipartite. One of these classes contains exclusively such digraphs; this class contains $15$ non-isomorphic digraphs all of which have underlying graphs isomorphic to $K_4$, and each contains at least one digon. One graph from this class, $D$, is shown in Figure \ref{fig:symnotbipori}. The characteristic polynomial of $D$ is
\[
\phi(H(D), t) = t^4 - 6t^2 + 5.
\]

\begin{figure}[ht!]
\centering
\begin{tikzpicture}
	\begin{pgfonlayer}{nodelayer}
		\node [style=new] (0) at (-1, 1) {};
		\node [style=new] (1) at (1, 1) {};
		\node [style=new] (2) at (1, -1) {};
		\node [style=new] (3) at (-1, -1) {};
	\end{pgfonlayer}
	\begin{pgfonlayer}{edgelayer}
		\draw [directed] (0) to (1);
		\draw [directed] (1) to (2);
		\draw [directed, bend right=15, looseness=1.00] (2) to (3);
		\draw [directed] (0) to (3);
		\draw [directed] (1) to (3);
		\draw [directed] (2) to (0);
		\draw [directed, bend right=15, looseness=1.00] (3) to (2);
	\end{pgfonlayer}
\end{tikzpicture}
\caption{An example of a digraph on $4$ vertices, having $H$-eigenvalues symmetric about $0$, but not oriented and not bipartite.}  \label{fig:symnotbipori}
\end{figure}
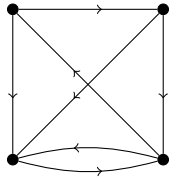

A common generalization of Proposition \ref{prop:h-bip} and Theorem \ref{thm:h-ori} follows from Theorem \ref{thm:hcp} using the fact that the spectrum is symmetric if and only if every even coefficient of the characteristic polynomial is zero. Thus, the following condition is certainly sufficient.

\begin{theorem}[\cite{LiuLi15}]
\label{thm:h-symmetric0}
If every odd cycle of $\Gamma(X)$ contains an even number of digons, then the $H$-spectrum of $X$ is symmetric about $0$.
\end{theorem}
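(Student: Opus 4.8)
The plan is to convert the symmetry of the $H$-spectrum into a parity condition on the coefficients of $\phi(X,t)$ and then read that condition off from the combinatorial formula in Theorem~\ref{thm:hcp}. First I would record the elementary fact that a finite multiset of real numbers $\{\lambda_i\}$ is symmetric about $0$ if and only if its monic polynomial $\phi(t)=\prod_i(t-\lambda_i)$ satisfies $\phi(-t)=(-1)^n\phi(t)$. Writing $\phi(X,t)=\sum_{j=0}^n c_j t^j$ and comparing coefficients on both sides, this is equivalent to demanding that $c_j=0$ for every index $j$ with $n-j$ odd. Thus the statement reduces to showing that, under the hypothesis, all coefficients $c_j$ with $n-j$ odd vanish.

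Next I would invoke Theorem~\ref{thm:hcp}, which expresses $c_j$ as a sum of contributions $(-1)^{r(B)+s(B)}2^{c(B)}$ over all basic subgraphs $B$ of order $n-j$. The cleanest route is not to exhibit cancellation among these contributions, but to show that when $n-j$ is odd the index set of the sum is empty, i.e.\ that $X$ has no basic subgraph of odd order at all. This is precisely where the hypothesis will enter.

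The key combinatorial observation is the following. Every component of a basic subgraph is either a single edge, which has order $2$, or a cycle of $\Gamma(X)$; and an even cycle also has even order. Hence a basic subgraph of odd total order must contain at least one component that is an odd cycle. But recall that, by definition, a cycle is permitted as a component of a basic subgraph only when the corresponding digraph in $X$ has an even number of non-digon arcs. For an odd cycle $C$ of $\Gamma(X)$, its length (an odd number) equals the number of its digons plus the number of its non-digon arcs; if the number of digons is even, then the number of non-digon arcs is odd. Therefore, under the hypothesis that every odd cycle of $\Gamma(X)$ contains an even number of digons, no odd cycle can occur as a legitimate component of any basic subgraph. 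Consequently no basic subgraph of odd order exists, the relevant sums are empty, and every $c_j$ with $n-j$ odd is zero; by the first step the $H$-spectrum is symmetric about $0$.

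The point requiring care, rather than a genuine obstacle, is the parity bookkeeping in the first and third steps: aligning the parity of the polynomial index $j$ (through $n-j$) with the parity of the order of a basic subgraph, and then tracking the parity of the number of non-digon arcs of an odd cycle via the relation $\text{length}=\text{(digons)}+\text{(arcs)}$. Once these parities are matched, the definition of a basic subgraph does all the work, and no estimation or cancellation argument is needed.
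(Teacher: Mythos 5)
Your proof is correct and takes essentially the same approach as the paper: the paper's own justification is a one-sentence remark that the result follows from Theorem~\ref{thm:hcp} together with the fact that the spectrum is symmetric about $0$ if and only if the coefficients $c_j$ with $n-j$ odd all vanish. Your write-up simply fills in the details of that sketch, the key one being that an odd cycle with an even number of digons has an odd number of non-digon arcs and therefore cannot occur as a component of any basic subgraph, so the sums defining the offending coefficients are empty.
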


The digraph in Figure \ref{fig:symnotbipori} shows that the above condition is only sufficient but not necessary.

A simple combinatorial characterization of digraphs with $H$-eigenvalues symmetric about $0$ is not known.

\section{$C^*$-algebra of a digraph}
\label{sec:h-diam}

The diameter of undirected graphs is bounded above by the number of distinct eigenvalues of its adjacency matrix. In this section we consider similar question for the Hermitian adjacency matrix.

Let $M$ be a Hermitian matrix (with at least one non-zero off-diagonal element to exclude trivialities). Let $\cM$ be the matrix algebra generated by $I, M, M^2, M^3, \ldots$ and let $\psi(M, t)$ be the minimal polynomial of $M$. Then, $\dim(\cM) = \deg(\psi(M, t)) - 1$. On the other hand, the degree of $\psi(M, t)$ is equal to the number of distinct eigenvalues of $M$.
If $M = A(X)$, the adjacency matrix of a digraph, it is easy to see that the dimension of $\cM$ is at least the diameter of the graph. This implies that the diameter of $X$ is smaller than the number of distinct eigenvalues of $A(X)$.

The case of the Hermitian adjacency matrix is quite different. For example, consider the modified directed cycle $\widetilde{C_n}$, obtained from a directed cycle by changing the orientation on one arc to the opposite direction.
Consider, in particular, the digraph $\widetilde{C_4}$ shown in Figure \ref{fig:c4tilde}.
We can compute that
\[
\phi(H(\widetilde{C_4}), t) = t^4 - 4t^2 + 4 = (t^2-2)^2
\]
and we see that $\widetilde{C_4}$ has exactly two distinct $H$-eigenvalues, but the diameter of $\Gamma(\widetilde{C_4})$ is~$2$.

\begin{figure}[ht!]
\centering
\begin{tikzpicture}
	\begin{pgfonlayer}{nodelayer}
		\node [style=new] (0) at (0, 1) {};
		\node [style=new] (1) at (0, -1) {};
		\node [style=new] (2) at (-1, 0) {};
		\node [style=new] (3) at (1, 0) {};
		\node [style=n] (4) at (0, 1.5) {$v_0$};
		\node [style=n] (5) at (1.5, 0) {$v_1$};
		\node [style=n] (6) at (0, -1.5) {$v_2$};
		\node [style=n] (7) at (-1.5, 0) {$v_3$};
	\end{pgfonlayer}
	\begin{pgfonlayer}{edgelayer}
		\draw [directed] (0) to (3);
		\draw [directed] (3) to (1);
		\draw [directed] (1) to (2);
		\draw [directed] (0) to (2);
	\end{pgfonlayer}
\end{tikzpicture}
\caption{Digraph $\widetilde{C_4}$, obtained from $C_4$ by reversing one arc. \label{fig:c4tilde}}
\end{figure}
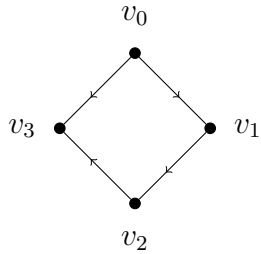

For $n\geq 3$, the $n$th \emph{necklace digraph,} denoted $N_n$, is an oriented graph on $3n$ vertices,
\[
V(N_n) = \{ v_j \mid j \in \ints_{2n} \} \cup \{ w_k \mid k \in \ints_n\}
\]
and with arcs
\[
\arcset(N_n) = \{v_jv_{j+1} \mid j \in \ints_{2n}\} \cup \{v_{2k}w_{k}, v_{2k + 2}w_{k} \mid k \in \ints_n\}.
\]
Let $C_j$ be the cycle $(v_{2j}, v_{2j + 1}, v_{2j + 2}, w_j, v_{2j})$ in $\Gamma(N_n)$. Each $v_{2j}$ lies on two of these cycles, $C_j$ and $C_{j-1}$. Every other vertex lies on a unique $C_j$. Figure \ref{fig:necklace4} shows $N_4$ with $C_0$ highlighted.

\begin{figure}[ht!]
\centering
\begin{tikzpicture}
	\begin{pgfonlayer}{nodelayer}
		\node [style=emph] (0) at (0, 3) {};
		\node [style=new] (1) at (0, -3) {};
		\node [style=new] (2) at (-3, 0) {};
		\node [style=emph] (3) at (3, 0) {};
		\node [style=emph] (4) at (2.25, 2.25) {};
		\node [style=emph] (5) at (1, 1) {};
		\node [style=new] (6) at (1, -1) {};
		\node [style=new] (7) at (-1, 1) {};
		\node [style=new] (8) at (-1, -1) {};
		\node [style=new] (9) at (2.25, -2.25) {};
		\node [style=new] (10) at (-2.25, -2.25) {};
		\node [style=new] (11) at (-2.25, 2.25) {};
		\node [style=n] (12) at (0, 3.5) {$v_0$};
		\node [style=n] (13) at (2.75, 2.5) {$v_1$};
		\node [style=n] (14) at (3.5, 0) {$v_2$};
		\node [style=n] (15) at (2.5, -2.75) {$v_3$};
		\node [style=n] (16) at (0, -3.5) {$v_4$};
		\node [style=n] (17) at (-2.75, -2.75) {$v_5$};
		\node [style=n] (18) at (-3.5, 0) {$v_6$};
		\node [style=n] (19) at (-2.75, 2.5) {$v_7$};
		\node [style=n] (20) at (-0.75, 0.75) {$w_3$};
		\node [style=n] (21) at (0.75, 0.75) {$w_0$};
		\node [style=n] (22) at (0.75, -0.75) {$w_1$};
		\node [style=n] (23) at (-0.75, -0.75) {$w_2$};
		\node [style=n] (24) at (1.5, 3) {\textcolor{magenta}{$C_0$}};
	\end{pgfonlayer}
	\begin{pgfonlayer}{edgelayer}
		\draw [directed, draw=magenta] (0) to (4);
		\draw [directed, draw=magenta] (4) to (3);
		\draw [directed] (3) to (9);
		\draw [directed] (9) to (1);
		\draw [directed] (1) to (10);
		\draw [directed] (10) to (2);
		\draw [directed] (2) to (11);
		\draw [directed] (11) to (0);
		\draw [directed,draw=magenta] (0) to (5);
		\draw [directed,draw=magenta] (3) to (5);
		\draw [directed] (3) to (6);
		\draw [directed] (1) to (6);
		\draw [directed] (1) to (8);
		\draw [directed] (2) to (8);
		\draw [directed] (2) to (7);
		\draw [directed] (0) to (7);
	\end{pgfonlayer}
\end{tikzpicture}
\caption{$N_4$ with $C_0$ in a lighter colour. \label{fig:necklace4}}
\end{figure}

To find the eigenvalues of $N_n$ we will use the following lemma.

\begin{lemma}
\label{lem:hfam-necklacecp}
For every $n\geq 3$ and $H = H(N_n)$, we have $H^3 = 4H$.
\end{lemma}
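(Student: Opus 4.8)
The plan is to show that $H = H(N_n)$ satisfies $H^3 = 4H$ by a direct combinatorial computation of the entries of $H^3$, using the walk-interpretation of matrix powers from Proposition \ref{prop:hpath}. Since the equation $H^3 = 4H$ is equivalent to checking that for every pair of vertices $p,q$ we have $(H^3)_{pq} = 4 H_{pq}$, and since $(H^3)_{pq}$ is a weighted sum over walks of length $3$ from $p$ to $q$ in $\Gamma(N_n)$, everything reduces to understanding the short-walk structure of the necklace. Notice first that $(H^3)_{pq}=0$ automatically whenever $p$ and $q$ are at distance exactly $2$ in $\Gamma(N_n)$, because a length-$3$ walk between two vertices must end at a vertex of the same bipartition-parity as it started relative to $p$; more usefully, $\Gamma(N_n)$ has girth $4$ and its local structure is very restricted, so the only nonzero entries of $H^3$ occur when $p=q$ (the diagonal) or when $p$ and $q$ are adjacent.

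First I would fix the two kinds of nonzero entries to verify. For the \emph{adjacent} case, I would take an arc or edge of $\Gamma(N_n)$ and enumerate all walks $p = u_0, u_1, u_2, u_3 = q$ of length $3$ that start and end at its two endpoints; these split into walks that ``backtrack'' along an incident edge (contributing $\sum_{u_1} \wt(p \to u_1 \to p \to q)$-type terms, i.e.\ $\deg(p)\,H_{pq}$ and $\deg(q)\,H_{pq}$ after accounting for the two places the backtrack can occur) and walks that go around a $4$-cycle $C_j$. The key arithmetic point is that each vertex of $N_n$ has degree contributing a fixed count, and the $4$-cycles carry a specific product of Hermitian weights; I expect the backtracking contributions to total $4\,H_{pq}$ and the genuine ``triangle-free'' cycle contributions of length $3$ between adjacent vertices to vanish or combine correctly. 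For the \emph{diagonal} case, $(H^3)_{pp}$ counts weighted closed walks of length $3$ at $p$; since $N_n$ is an oriented graph with no triangles in $\Gamma(N_n)$ (girth $4$), there are \emph{no} closed walks of length $3$ other than degenerate ones, and one checks by Lemma \ref{lem:Hdegs} and the weight cancellation that $(H^3)_{pp} = 0 = 4 H_{pp}$.

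The concrete steps in order: (1) record that $\Gamma(N_n)$ is triangle-free, so $\tr$ contributions and all genuine $3$-cycles vanish, giving zero diagonal; (2) for each edge/arc type in $N_n$ — namely the outer-cycle arcs $v_j v_{j+1}$ and the spoke arcs $v_{2k}w_k,\, v_{2k+2}w_k$ — list the incident $4$-cycles $C_j$ and the incident edges, then compute $(H^3)_{pq}$ as the sum of backtracking walks through each common neighbour plus walks traversing three consecutive edges of a $C_j$; (3) substitute the Hermitian weights $\pm i$ (here all edges are oriented, so every nonzero $H_{uv} = \pm i$) and simplify, checking the total equals $4 H_{pq}$ in every case; (4) conclude $H^3 = 4H$ entrywise. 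Because of the high symmetry of $N_n$ — every $C_j$ is isomorphic and every vertex type recurs identically around the necklace — it suffices to verify the identity for one representative of each orbit of ordered pairs under the rotational automorphism, which is only a handful of cases.

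The main obstacle will be the careful bookkeeping of the weights around each $4$-cycle $C_j = (v_{2j}, v_{2j+1}, v_{2j+2}, w_j)$: since $H$ is Hermitian with purely imaginary off-diagonal entries, a length-$3$ walk across three edges of a $4$-cycle contributes a product of three factors $\pm i$, whose phase depends delicately on the chosen arc orientations, and I must confirm that for adjacent $p,q$ these cycle-contributions either cancel against each other or reinforce to exactly match the required $4 H_{pq}$ after the backtracking terms are added. A clean way to manage this is to observe that reversing the orientation on a single arc of an oriented graph conjugates $H$ by a diagonal unitary (a gauge/switching transformation that preserves $H^3 = 4H$), so I may first switch $N_n$ into its most symmetric orientation before computing; this reduces the sign-tracking to a single clean configuration and makes the $4H$ constant transparent.
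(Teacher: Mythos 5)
Your overall strategy (entrywise walk-counting via Proposition \ref{prop:hpath}) is the same as the paper's, but there are two genuine gaps. The first is the distance-$3$ case, which you dismiss rather than prove. Bipartiteness only forces $(H^3)_{pq}=0$ when $p$ and $q$ are at even distance (and on the diagonal); it says nothing about pairs at distance exactly $3$, which do exist in $\Gamma(N_n)$ --- for instance $v_1$ and $v_4$. For such a pair $H_{pq}=0$, yet there \emph{are} length-$3$ walks between them (e.g.\ $v_1v_2v_3v_4$ and $v_1v_2w_1v_4$), so you must show their weighted contributions cancel. This cancellation is \emph{not} a consequence of girth or ``restricted local structure'': it depends on the particular orientation of $N_n$. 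With the given arcs the two walks have weights $i\cdot i\cdot i=-i$ and $i\cdot i\cdot(-i)=i$, which cancel; but if you reversed the single arc $v_3v_4$, they would both equal $i$ and reinforce, and $H^3=4H$ would fail. The paper devotes a separate case of its proof exactly to this cancellation, and your plan omits it entirely. (A smaller inaccuracy in the adjacent case: the backtracking walks alone contribute $\bigl(\deg(u)+\deg(v)-1\bigr)H_{pq}=5H_{pq}$, not $4H_{pq}$; the walk going around the incident $4$-cycle contributes $-H_{pq}$, and the constant $4$ arises as $5-1$. So the cycle contributions are essential, not something that ``vanishes''.)

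The second gap is your proposed shortcut for the sign bookkeeping: the claim that ``reversing the orientation on a single arc of an oriented graph conjugates $H$ by a diagonal unitary'' is false. Under conjugation by a diagonal unitary $S$, the entry $H(u,v)$ becomes $H(u,v)S(v)/S(u)$, so the product of $H$-entries around any cycle is a conjugation invariant; reversing one arc of a $4$-cycle multiplies that product by $-1$, hence cannot be realized by such a conjugation. The legitimate operation (local reversal, Proposition \ref{prop:localreversal}) reverses \emph{all} arcs across a vertex cut, never a single arc. Consequently you cannot ``switch $N_n$ into its most symmetric orientation'' before computing; the identity $H^3=4H$ genuinely depends on the orientation of $N_n$ as defined, which is precisely why the orientation-sensitive distance-$3$ cancellation above must be checked by hand.
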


\begin{proof}
We will show that $H^3(u,v) = 4H(u,v)$. Observe that since the underlying graph $\Gamma := \Gamma(N_n)$ is a bipartite graph of girth $4$, we have that $H^3(u,u) = 0$ and $H^3(u,v) = 0$ if $\dist_{\Gamma}(u,v)$ is even or $\dist_{\Gamma}(u,v)>3$. We only need to consider the two cases where $\dist_{\Gamma}(u,v) \in \{1,3\}$.

Let us first consider the case when $u,v$ are adjacent in $\Gamma$. Since $H$ and $H^3$ are Hermitian, we need only check $u,v$ such that $uv \in \arcset(N_n)$. Let $\cW$ be the set of all walks of length $3$ from $u$ to $v$ in $\Gamma$. The following are all possible types of walks of length $3$ in $\Gamma$, starting at $u$ and ending at $v$:
\begin{enumerate}[(a)]
\item $W_1 = (u, v, u ,v)$;
\item $W_2 = (u, v, x ,v)$, where $x \in N_{\Gamma}(v) \setminus \{u\}$;
\item $W_3 = (u, w, u ,v)$, where $w \in N_{\Gamma}(u) \setminus \{v\}$; and
\item $W_4 = (u, w, x ,v)$, where $w \in N_{\Gamma}(u)\setminus \{v\}$, $x \in N_{\Gamma}(w)\setminus \{u\}$, and $xv \in \arcset(\Gamma)$.
\end{enumerate}

Every edge of $\Gamma$ which is traversed once in each direction in any $W_j$ contributes a factor of $1$ to the weight $\wt(W_j)$. Thus $\wt(W_1) = \wt(W_2) = \wt(W_3) = i$. To find the weight of $W_4$, we observe that every arc $uv$ lies on the unique cycle $C_j$, for some $j$, and that $W_4$ together with the arc $uv$ gives a $4$-cycle in $\Gamma$, which must then correspond to $C_j$. From this we immediately see that, for each such $uv$, there is exactly one walk from $u$ to $v$ isomorphic to $W_4$ and that $W_4$ is a path of length 3 on $C_j$. We may observe that all such paths either traverse two arcs in the backward direction and one in the forward direction, or all three arcs in the forward direction. In either case, $\wt(W_4) = -i$.

For every arc $uv$, one of $u$ or $v$ has degree $4$ in $\Gamma$ and the other has degree $2$. Then $\cW$ contains one walk isomorphic to $W_1$ and either $3$ walks isomorphic to $W_2$ and one isomorphic to $W_3$, or $3$ walks isomorphic $W_3$ and one isomorphic to $W_2$. Then, since $W_j$ for $j=1,2,3$ have the same weight, we get
\[ \begin{split}
H^3(u,v) &= \sum_{W \in \cW} \wt(W) \\
&= \wt(W_1) + \sum_{N_{\Gamma}(v) \setminus \{u\}} \wt(W_2) + \sum_{N_{\Gamma}(u) \setminus \{v\}} \wt(W_3) ~+~ \wt(W_4) \\
&= i + (3i + i)  - i = 4i = 4H(u,v)
\end{split}
\]
as claimed.

Finally, suppose tha $u,v$ are at distance 3 in $\Gamma$. In this case, since vertices lying on $C_j$ for any given $j$ can be at distance at most $2$, we have that $u \in C_j$ and $v \in C_{j \pm 1}$. Also note that either $u$ or $v$ is of degree $4$ in $\Gamma(X)$, thus equal to $v_{2k}$ for some $k \in \ints_n$.

Suppose that $u = v_{2k}$ for some $k$. Then $v \in C_{k + 1}$ or $v \in C_{k-2}$. In either case, there are two walks from $u$ to $v$ of opposite weight, and so $H^3(u,v) = 0 = H(u,v)$.
\end{proof}

\begin{corollary}
The $H$-spectrum of $N_n$ is $\sigma_H(N_n) = \{0^{(n)}, 2^{(n)}, -2^{(n)}\}$.
\end{corollary}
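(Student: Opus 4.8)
The entire burden of the corollary has already been discharged by Lemma \ref{lem:hfam-necklacecp}, so the plan is to read off the spectrum from the polynomial identity $H^3 = 4H$ and then pin down the three multiplicities by elementary trace bookkeeping. First I would rewrite the identity as $H(H-2I)(H+2I) = H^3 - 4H = 0$. Since $H=H(N_n)$ is diagonalizable (Proposition \ref{prop:real}), its minimal polynomial divides $t(t-2)(t+2)$, and therefore every $H$-eigenvalue lies in $\{0,2,-2\}$. Writing $m_0, m_2, m_{-2}$ for the respective multiplicities, it remains only to compute these three nonnegative integers.

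I would then set up three linear conditions on $m_0,m_2,m_{-2}$. The first is the order count: $N_n$ has vertex set $\{v_j : j\in\ints_{2n}\}\cup\{w_k : k\in\ints_n\}$, hence $3n$ vertices, giving $m_0+m_2+m_{-2}=3n$. The second is symmetry: since $N_n$ is an oriented graph, Theorem \ref{thm:h-ori} guarantees that its $H$-spectrum is symmetric about $0$, whence $m_2=m_{-2}$. The third is the second-moment identity from Proposition \ref{prop:traceh3}(ii), namely $\sum_j \lambda_j^2 = \tr(H^2) = 2e$, where $e$ is the number of edges of $\Gamma(N_n)$.

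To apply the last condition I would count the edges of the underlying graph. The arc set $\arcset(N_n)$ consists of the $2n$ cycle arcs $v_jv_{j+1}$ together with the $2n$ arcs $v_{2k}w_k, v_{2k+2}w_k$, for a total of $4n$ arcs; as $N_n$ has no digons, $\Gamma(N_n)$ has exactly $e=4n$ edges, so $\tr(H^2)=8n$. On the other hand $\sum_j\lambda_j^2 = 4m_2+4m_{-2}$, and using $m_2=m_{-2}$ this reads $8m_2=8n$, so $m_2=m_{-2}=n$. Substituting back into the order count yields $m_0=n$. Hence $\sigma_H(N_n)=\{0^{(n)},2^{(n)},-2^{(n)}\}$, as claimed.

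I do not anticipate a genuine obstacle here: the only nontrivial ingredient, the cubic relation $H^3=4H$, is supplied by the preceding lemma, and the remaining steps are a determined linear system. The single point requiring a little care is the edge count of $\Gamma(N_n)$ (making sure the $4n$ arcs really yield $4n$ distinct edges with no double-counting), since an error there would propagate directly into the multiplicity $m_2$ through the trace identity.
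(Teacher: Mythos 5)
Your proposal is correct and follows essentially the same route as the paper: both deduce from $H^3=4H$ that the eigenvalues lie in $\{0,\pm 2\}$ and then determine the multiplicities from the vertex count and $\tr(H^2)=2e=8n$. The only cosmetic differences are that you invoke Theorem \ref{thm:h-ori} (symmetry of the spectrum of an oriented graph) where the paper simply uses $\tr(H)=0$, and you argue via diagonalizability that the minimal polynomial divides $t(t-2)(t+2)$ rather than asserting it equals $t^3-4t$ — a slightly more careful phrasing of the same step.
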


\begin{proof}
Let $H =H(N_n)$. From Lemma \ref{lem:hfam-necklacecp}, we see that the minimal polynomial of $H$ is $t^3 - 4t$. Since every eigenvalue of a matrix is a root of its minimal polynomial, the distinct eigenvalues of $H$ are $0$, $2$ and $-2$. Let $q,r$, and $s$ be the multiplicities of $0,2$, and $-2$, respectively. Since $\tr(H) = 0$, we see that $r = s$. By Proposition \ref{prop:traceh3}(ii), $\tr(H^2) = 2 |E(\Gamma(N_n))| = 8n$. Then $r(2^2 + (-2)^2) = 8n$, and so $r=n$. Since $q+2r = 3n$, we have that $q=n$.
\end{proof}

\begin{corollary}
There exists an infinite family of digraphs $\{X_j\}_{j=1}^{\infty}$ such that the diameter of $\Gamma(X_j)$ goes to $\infty$ as $j \rightarrow \infty$, while each $X_j$ has only three distinct $H$-eigenvalues.
\end{corollary}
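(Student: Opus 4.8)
The plan is to take the family to be the necklace digraphs themselves. Setting $X_j = N_{j+2}$ for $j \geq 1$ ensures that each $X_j$ is $N_n$ with $n = j+2 \geq 3$, so the preceding corollary applies and gives $\sigma_H(X_j) = \{0^{(n)}, 2^{(n)}, -2^{(n)}\}$. Hence every $X_j$ has exactly three distinct $H$-eigenvalues, namely $0$, $2$, and $-2$, independently of $j$. The only thing left to establish is that the diameter of $\Gamma(X_j)$ is unbounded as $j \to \infty$.

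First I would record the vertex degrees in $\Gamma(N_n)$ from the cycle description $C_j = (v_{2j}, v_{2j+1}, v_{2j+2}, w_j, v_{2j})$. Each even-index vertex $v_{2k}$ lies on the two consecutive cycles $C_{k-1}$ and $C_k$, so it has exactly the four neighbours $v_{2k-1}$, $v_{2k+1}$, $w_{k-1}$, $w_k$; every odd-index $v_{2k+1}$ and every $w_k$ lies on a single cycle and has degree $2$. Thus $\Gamma(N_n)$ is a connected graph on $3n$ vertices with maximum degree $\Delta = 4$.

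I would then force the diameter to infinity by a Moore-type counting bound, which avoids any explicit distance computation. In any connected graph of maximum degree $\Delta$ and diameter $d$, a breadth-first search from a fixed vertex shows that the number of vertices is at most $1 + \Delta \sum_{i=0}^{d-1} (\Delta-1)^i$. With $\Delta = 4$ this reads $3n \le 1 + 4 \cdot \frac{3^d - 1}{2} = 2 \cdot 3^d - 1$, so $d \ge \log_3\!\big(\frac{3n+1}{2}\big)$. The right-hand side tends to infinity with $n$, so $\operatorname{diam}(\Gamma(X_j)) \to \infty$, completing the proof.

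I do not anticipate a genuine obstacle here: the spectrum is supplied wholesale by the preceding corollary, and the diameter statement is a soft consequence of the fixed maximum degree together with the linear growth $3n$ of the vertex count. Should a concrete rate be preferred instead, one could observe that the rim cycle $v_0 v_1 \cdots v_{2n-1}$ sits isometrically inside $\Gamma(N_n)$ — routing through any $w_k$ costs length $2$, exactly matching the rim path $v_{2k} v_{2k+1} v_{2k+2}$, so no shortcut is ever created — whence $\operatorname{diam}(\Gamma(N_n)) \ge n$ directly. Either route is entirely sufficient.
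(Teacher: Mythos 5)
Your proposal is correct and matches the paper's intended argument: the paper states this corollary immediately after computing $\sigma_H(N_n) = \{0^{(n)}, 2^{(n)}, -2^{(n)}\}$, leaving the (unbounded-diameter) proof implicit, and your family $X_j = N_{j+2}$ is exactly the necklace family. Your Moore-bound and isometric-rim arguments simply supply the routine details of the diameter claim that the paper takes as obvious.
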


\section{Cospectrality}
\label{sec:cospec}
\label{sec:hfam:forest}

In this section, we study properties of digraphs that are $H$-cospectral and describe some operations on digraphs that preserve the $H$-spectrum. In particular, we are motivated to consider digraph operations that preserve the $H$-spectrum and preserve the underlying graph.

The $H$-spectrum of a digraph $X$ does not determine if $X$ is strongly-connected, weakly-connected or disconnected. In Figure \ref{fig:connect}, we give an example of three digraphs, $X_1$, $X_2$ and $X_3$. A routine calculation shows that their characteristic polynomials are the same,
\[ \phi(H, t) = t^5 - 5t^3 + 2t^2 + 2t.
\]
Therefore, $X_1,X_2$, and $X_3$ are cospectral to each other.
Observe that $X_1$ is strongly connected, $X_2$ is weakly connected but not strongly connected, and $X_3$ is not even weakly connected.

\begin{figure}[ht!]
\begin{center}
\begin{tikzpicture}
		\node [style=new] (0) at (-4, 0) {};
		\node [style=new] (1) at (-5, 0) {};
		\node [style=new] (2) at (-4, -1) {};
		\node [style=new] (3) at (-3, 0) {};
		\node [style=new] (4) at (-4, 1) {};
		\node [style=new] (5) at (-1, 0) {};
		\node [style=new] (6) at (-2, 0) {};
		\node [style=new] (7) at (-1, -1) {};
		\node [style=new] (8) at (-1, 1) {};
		\node [style=new] (9) at (0, 0) {};
		\node [style=new] (10) at (1, 0) {};
		\node [style=new] (11) at (2, 0) {};
		\node [style=new] (12) at (2, 1) {};
		\node [style=new] (13) at (2, -1) {};
		\node [style=new] (14) at (3, 0) {};
		\node [style=n] (15) at (2, -1.5) {$X_3$};
		\node [style=n] (17) at (-1, -1.5) {$X_2$};
		\node [style=n] (18) at (-4, -1.5) {$X_1$};
		\draw [directed] (2) to (1);
		\draw [directed] (1) to (0);
		\draw [directed] (7) to (6);
		\draw [directed] (6) to (5);
		\draw [directed] (5) to (8);
		\draw [directed] (5) to (9);
		\draw [directed] (11) to (12);
		\draw [directed] (10) to (11);
		\draw [directed] (11) to (13);
		\draw [directed] (10) to (12);
		\draw [directed, bend left=45, looseness=0.75] (0) to (2);
		\draw [directed, bend left=45, looseness=0.75] (0) to (3);
		\draw [directed, bend left=45, looseness=0.75] (0) to (4);
		\draw [directed, bend right=315, looseness=0.75] (4) to (0);
		\draw [directed, bend left=45, looseness=0.75] (3) to (0);
		\draw [directed, bend left=45, looseness=0.75] (2) to (0);
		\draw [directed, bend right=315, looseness=0.75] (5) to (7);
		\draw [directed, bend left=45, looseness=0.75] (7) to (5);
		\draw [directed, bend left, looseness=0.75] (10) to (13);
		\draw [directed, bend left, looseness=0.75] (13) to (10);
\end{tikzpicture}
\end{center}
\caption{$H$-cospectral digraphs on 5 vertices with different connectivity properties. \label{fig:connect}}
\end{figure}
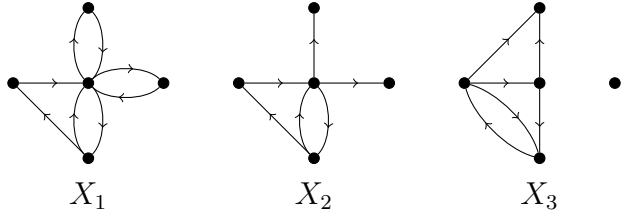

By the computation, as recorded in Tables \ref{tab:Hsmalldigrs} and \ref{tab:Asmalldigrs} in Section \ref{sec:hfam}, we see that, for small digraphs, the number of $H$-cospectral classes is smaller than the number of $A$-cospectral classes on the same number of vertices. (However, we expect that the opposite may hold when $n$ is large.)

By contrast, any two acyclic digraphs on the same number of vertices are $A$-cospectral (all their eigenvalues are 0); in particular, there are $A$-cospectral digraphs on $2$ vertices with non-isomorphic underlying graphs, whereas the smallest pair of $H$-cospectral digraphs with non-isomorphic underlying graphs have $4$ vertices (shown in Figure \ref{fig:cospecgammaneq}).

\begin{figure}[ht!]
\centering
\begin{tikzpicture}
	\begin{pgfonlayer}{nodelayer}
		\node [style=new] (0) at (-2, 0) {};
		\node [style=new] (1) at (-2, 1) {};
		\node [style=new] (2) at (-3, -0.75) {};
		\node [style=new] (3) at (-1, -0.75) {};
		\node [style=new] (4) at (3, -0.75) {};
		\node [style=new] (5) at (2, 1) {};
		\node [style=new] (6) at (1, -0.75) {};
		\node [style=new] (7) at (2, -0.13) {};
	\end{pgfonlayer}
	\begin{pgfonlayer}{edgelayer}
		\draw [directed] (0) to (1);
		\draw [directed] (0) to (2);
		\draw [directed] (0) to (3);
		\draw [directed] (6) to (5);
		\draw [directed] (5) to (4);
		\draw [directed] (4) to (6);
	\end{pgfonlayer}
\end{tikzpicture}
\caption{The smallest pair of $H$-cospectral digraphs with non-isomorphic underlying graphs. \label{fig:cospecgammaneq}}
\end{figure}
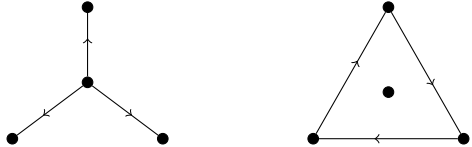

There are many cases of $H$-cospectral digraphs having isomorphic underlying graphs. In Section \ref{sec:hfam}, we will see that all orientations of an odd cycle are $H$-cospectral to each other and all digraphs whose underlying graph is an $n$-star are $H$-cospectral. From Table \ref{tab:h3s}, we see that every pair of $H$-cospectral digraphs on $3$ vertices have the same underlying digraph.

We will try to explain the spectral information about the underlying graph by looking at some $H$-spectrum preserving operations which do not change the underlying graph.

It is immediate from the definition of the Hermitian adjacency matrix that if $X$ is a digraph and $X^C$ is its converse, then $H(X^C) = H(X)^T = \comp{H(X)}$. This implies the following result.

\begin{proposition}
A digraph $X$ and its converse are $H$-cospectral.
\end{proposition}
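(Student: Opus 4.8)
The claim to prove is that a digraph $X$ and its converse $X^C$ are $H$-cospectral. The statement immediately preceding it already records the crucial matrix-level fact: $H(X^C) = H(X)^T = \comp{H(X)}$. So the plan is to exploit this identity and show that conjugate-transposition cannot change the characteristic polynomial of a Hermitian matrix.

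The cleanest route is to recall that any matrix and its transpose have the same characteristic polynomial, since $\det(tI - M^T) = \det\bigl((tI - M)^T\bigr) = \det(tI - M)$, using that the determinant is invariant under transposition. Applying this with $M = H(X)$ gives $\phi(X^C, t) = \phi\bigl(H(X)^T, t\bigr) = \phi(H(X), t) = \phi(X, t)$, which is exactly $H$-cospectrality. Alternatively, and perhaps more in keeping with the Hermitian theme, I would note that $H(X^C) = \comp{H(X)}$, so $H(X^C)$ is the entrywise complex conjugate of $H(X)$; since $H(X)$ is Hermitian with real eigenvalues, its conjugate is unitarily similar to it (indeed $\comp{H} = \comp{U}\,\comp{\La}\,\comp{U}^*$ for a spectral decomposition $H = U\La U^*$ with $\La$ real, so $\comp{H}$ has the same real spectrum $\La$). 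Either formulation yields identical characteristic polynomials.

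I would present the first version as the main argument because it is a one-line consequence of a standard determinant identity and does not even require Hermiticity. The key steps, in order, are: (1) invoke $H(X^C) = H(X)^T$ from the discussion above; (2) apply $\det(tI - M) = \det(tI - M^T)$ to conclude the characteristic polynomials coincide; (3) conclude $\sigma_H(X^C) = \sigma_H(X)$, i.e.\ $X$ and $X^C$ are $H$-cospectral.

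There is essentially no obstacle here; this is a short verification rather than a substantive proof, and the only thing to be careful about is citing the correct basic fact (transpose-invariance of the determinant, or unitary similarity of a Hermitian matrix to its conjugate) rather than appealing to anything deeper. Since the preceding sentence already supplies the matrix identity, the proof amounts to one additional standard observation.
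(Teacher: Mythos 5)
Your proof is correct and follows essentially the same route as the paper, which derives the proposition directly from the identity $H(X^C) = H(X)^T = \comp{H(X)}$ stated just before it and leaves the remaining (standard) step implicit. You simply make explicit the determinant fact $\det(tI - M) = \det(tI - M^T)$ that the paper treats as immediate, so there is no substantive difference.
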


Inspired by this, we now define a local operation on a digraph which will also preserve the spectrum with respect to the Hermitian adjacency matrix. For a digraph $X$ and a vertex $v$ of $X$, the \emph{local reversal of $X$ at $v$} is the operation of replacing every arc $xy$ incident with $v$ by its converse $yx$. We can extend this to the \emph{local reversal of $X$ at $S\subset V$} by taking the local reversal at $v$ for each $v\in S$. Observe that the order of reversals does not matter. If an arc $xy$ is incident to two vertices of $S$, then it is unchanged in the local reversal at $S$. We denote by $\delta(S)$ the arcs with exactly one end in $S$. Note that this operation generalizes the concept of switching-equivalence, defined earlier for tournaments.

\begin{proposition}
\label{prop:localreversal}
If $X$ is a digraph and $S \subset V(X)$ such that $\delta(S)$ contains no digons, then $X$ and the digraph obtained by the local reversal of $X$ at $S$ are H-cospectral.
\end{proposition}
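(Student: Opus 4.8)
The plan is to realize the local reversal as a diagonal unitary conjugation of the Hermitian adjacency matrix, so that cospectrality follows immediately from similarity. Write $H = H(X)$ and let $X'$ denote the digraph obtained from $X$ by local reversal at $S$. First I would record the combinatorial effect of the operation on arcs. An arc with both ends in $S$ is reversed twice and hence returns to itself; an arc with neither end in $S$ is untouched; and an arc with exactly one end in $S$ (that is, an arc lying in the cut $\delta(S)$) is reversed exactly once. Thus $X'$ differs from $X$ precisely by reversing every arc of $\delta(S)$, while everything else is left intact.

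Next I would introduce the signing matrix $D = \diag(d_v)_{v \in V}$ defined by $d_v = -1$ if $v \in S$ and $d_v = +1$ otherwise, and claim that $H(X') = D H D$. Since $D$ is diagonal, $(DHD)_{uv} = d_u H_{uv} d_v$, so the $(u,v)$-entry is unchanged when $u,v$ lie on the same side of the partition $(S, V \setminus S)$ and is negated when exactly one of $u,v$ belongs to $S$. The first case matches reversal, since arcs with both or neither end in $S$ are unchanged and $d_u d_v = 1$. For a pair with exactly one end in $S$, the hypothesis that $\delta(S)$ contains no digons guarantees $H_{uv} \in \{0, i, -i\}$; reversing that arc interchanges $i$ and $-i$ and fixes $0$, which is exactly multiplication by $-1 = d_u d_v$. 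Hence $H(X') = DHD$ holds entry by entry.

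Finally, since $D$ is a real diagonal $\pm 1$ matrix, $D = D^* = D^{-1}$, so $DHD = DHD^{-1}$ is a similarity (indeed a unitary conjugation). Therefore $H(X')$ and $H(X)$ have the same characteristic polynomial, and $X$ and $X'$ are $H$-cospectral.

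The only subtlety worth flagging, and the one place where the hypothesis is genuinely used, is the entrywise check for cross-cut pairs: the identity ``reversal equals negation'' relies on those entries being $0$ or $\pm i$. Were a digon to cross the cut, its entry $1$ would be negated to $-1$ by conjugation with $D$, whereas local reversal sends the digon to itself and leaves the entry equal to $1$. The two operations would then disagree, and the argument would break. Thus the no-digon condition on $\delta(S)$ is exactly what is needed to make local reversal coincide with conjugation by $D$.
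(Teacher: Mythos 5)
Your proof is correct and follows essentially the same route as the paper: conjugation by the diagonal $\pm 1$ matrix supported on $S$, with the no-digon hypothesis ensuring that negating the cross-cut entries (all in $\{0,\pm i\}$) coincides with arc reversal. Your entrywise verification and the explicit remark about where the hypothesis is used are just a more detailed write-up of the paper's argument.
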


\begin{proof}
Let $X'$ be the digraph obtained by the local reversal of $X$ at $S$. Let $M$ be the diagonal matrix indexed by the vertices of $X$ given by
\[
M_{uu} = \begin{cases} -1, & \text{if } u\in S; \\ +1, & \text{if } u\notin S. \end{cases}
\]
Consider $M^{-1}H(X)M$. Applying $M^{-1}=M$ on the left of $H(X)$ changes the sign for all rows indexed by vertices of $S$ and applying $M$ on the right changes the sign of all columns indexed by vertices of $S$. Since $\delta(S)$ contains no digons, this implies that $M^{-1}H(X)M = H(X')$. Consequently, the matrices $H(X)$ and $H(X')$ are similar and hence cospectral.
\end{proof}

Proposition \ref{prop:localreversal} cannot be generalized to the case when $\delta(S)$ contains digons. However, there is one exceptional case that is described next.

\begin{proposition}
\label{prop:cut of digons}
If $X$ is a digraph and $S \subset V(X)$ such that $\delta(S)$ contains only digons, then $X$ and the digraph obtained by replacing each digon $\{x,y\}$ $(x \notin S, y\in S)$ in the cut by the arc $xy$ are H-cospectral.
\end{proposition}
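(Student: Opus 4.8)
The plan is to imitate the proof of Proposition~\ref{prop:localreversal}, replacing the real diagonal conjugation by a complex unitary one. Concretely, I would let $X'$ denote the digraph obtained from $X$ by replacing each cut-digon $\{x,y\}$ (with $x\notin S$, $y\in S$) by the single arc $xy$, and introduce the diagonal matrix $M=\diag(m_u)_{u\in V(X)}$ defined by $m_u=i$ for $u\in S$ and $m_u=1$ for $u\notin S$. Since each $m_u$ has modulus $1$, the matrix $M$ is unitary, so $M^*=M^{-1}$ and conjugation by $M$ preserves both the spectrum and Hermiticity. The whole proof then reduces to checking the single matrix identity $M^*H(X)M = H(X')$.

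To verify this identity I would compute the $(u,v)$-entry $(M^*H(X)M)_{uv}=\overline{m_u}\,m_v\,H(X)_{uv}$ and split into cases according to which side of the cut $u$ and $v$ lie on. When $u$ and $v$ are on the same side the scalar $\overline{m_u}m_v$ equals $1$, so these entries---in particular all arcs and digons induced inside $S$ and inside $V(X)\setminus S$---are left untouched. The only entries that change are those across the cut, where $\overline{m_u}m_v\in\{i,-i\}$: if $u\notin S$ and $v\in S$ then $\overline{m_u}m_v=i$, while if $u\in S$ and $v\notin S$ then $\overline{m_u}m_v=-i$.

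The key step is then to apply this to a cut-digon $\{x,y\}$ with $x\notin S$, $y\in S$, for which $H(X)_{xy}=H(X)_{yx}=1$. The computation above turns the $(x,y)$-entry into $i$ and the $(y,x)$-entry into $-i$, which is precisely the pair of Hermitian entries recording the single arc $xy$; hence the transformed matrix agrees with $H(X')$ on this digon, and it agrees everywhere else by the same-side analysis. This yields $M^*H(X)M=H(X')$, so the two matrices are unitarily similar and therefore $H$-cospectral.

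The computation is short, and the only real subtlety---which I expect to be a point to state carefully rather than a genuine obstacle---is the bookkeeping of phases: one must check that the resulting off-diagonal entries are exactly $\pm i$ (so that $H(X')$ is a bona fide Hermitian adjacency matrix of a simple digraph) and that the arc produced is directed from the outside endpoint to the inside endpoint, matching the statement. This is exactly where the hypothesis that $\delta(S)$ consists only of digons is used: a non-digon arc across the cut would carry entry $\pm i$, and multiplying it by $\pm i$ would produce a forbidden entry $\mp 1$, so the construction would fail. In other words, the hypothesis is precisely what guarantees that conjugation by $M$ sends $H(X)$ to another genuine Hermitian adjacency matrix.
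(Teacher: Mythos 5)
Your proof is correct and is essentially the paper's own argument: the paper proves this proposition by remarking that the proof of Proposition~\ref{prop:localreversal} works verbatim with the diagonal entry $M_{uu}=i$ for $u\in S$ (it also offers an alternative derivation from Theorem~\ref{thm:hcp}). One tiny quibble with your closing remark about necessity of the hypothesis: a non-digon arc across the cut produces a forbidden entry $-1$ only when it is directed from outside $S$ to inside; an arc directed from inside $S$ to outside would instead be converted into a digon (a legal entry, just a different digraph), which is exactly why the more general four-way switching of Theorem~\ref{thm:generalized switch} tolerates such arcs.
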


Proposition \ref{prop:cut of digons} follows directly from Theorem \ref{thm:hcp}, and the details are left to the reader.
Equivalently, the proof of Proposition \ref{prop:localreversal} works, where the only difference is that we take $M_{uu}=i$ if $u\in S$.

In a special case when each edge is a cutedge, we obtain the following corollary, which was proved earlier in \cite[Corollary 2.21]{LiuLi15}.

\begin{corollary}[\cite{LiuLi15}]
\label{cor:hfam-forests}
If $X$ is a digraph whose underlying graph is a forest, then $H(X)$ is cospectral with $A(\Gamma(X))$.
\end{corollary}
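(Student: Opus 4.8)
The plan is to reduce the statement to the fact that $A(\Gamma(X)) = H(\digr(\Gamma(X)))$, where $\digr(\Gamma(X))$ is the digraph obtained from the forest $\Gamma(X)$ by turning every edge into a digon. Thus it suffices to prove that $X$ and $\digr(\Gamma(X))$ are $H$-cospectral; equivalently, that on a forest the $H$-spectrum does not depend on how the edges are oriented or doubled. I would establish this by transforming $X$ into $\digr(\Gamma(X))$ one edge at a time, using the spectrum-preserving operations already available, and exploiting that in a forest every edge is a cut-edge.

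Concretely, fix an edge $e = \{u,v\}$ of $\Gamma(X)$ and let $S$ be the vertex set of the component of $\Gamma(X) - e$ containing $v$. Since $e$ is a cut-edge, $\delta(S) = \{e\}$, so every operation performed with this cut affects only $e$ and leaves all remaining edges untouched. If $e$ currently carries a single arc, I flip its orientation (when needed) by the local reversal of Proposition \ref{prop:localreversal}, which applies because $\delta(S) = \{e\}$ contains no digon. If $e$ is a digon, Proposition \ref{prop:cut of digons} (equivalently Corollary \ref{cor:cutedge}) replaces it by a single arc without changing the $H$-spectrum; since $H$-cospectrality is symmetric, the same step run in reverse promotes a single arc back to a digon. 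Hence, independently on each edge, I can pass between the three possible configurations (digon, arc $uv$, arc $vu$) while preserving both $\Gamma(X)$ and the $H$-spectrum. Iterating over all edges converts $X$ into $\digr(\Gamma(X))$, and since $H(\digr(\Gamma(X))) = A(\Gamma(X))$, the two matrices are cospectral.

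The only point that needs care --- and the crux of the argument --- is the \emph{locality} of each step: I must know that changing $e$ does not disturb the configuration on any other edge, so that the edges can be processed independently and in any order. This is exactly what the cut-edge property $\delta(S) = \{e\}$ guarantees, and it persists throughout the process because all of these operations preserve the underlying graph $\Gamma(X)$, which remains a forest. The directional subtlety (promoting a single arc back to a digon) is handled purely by the symmetry and transitivity of the cospectrality relation, so no new computation is required.

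An alternative, entirely computation-free route is to invoke Theorem \ref{thm:hcp} directly. Because $\Gamma(X)$ is a forest it has no cycles, so every basic subgraph of $\Gamma(X)$ is a disjoint union of single edges; for such a subgraph $B$ one has $c(B) = 0$ and $r(B) = 0$, and $(-1)^{s(B)}$ depends only on the number of edges of $B$, i.e.\ only on $\Gamma(X)$ and the order of $B$. Consequently the coefficients $c_j$ produced by Theorem \ref{thm:hcp} are identical for $X$ and for $\digr(\Gamma(X))$, which gives $\phi(X,t) = \phi(\digr(\Gamma(X)),t) = \phi(A(\Gamma(X)),t)$ with no case analysis at all.
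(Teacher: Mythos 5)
Your main argument is correct and is essentially the paper's own proof: the paper presents this corollary precisely as the special case of Propositions \ref{prop:localreversal} and \ref{prop:cut of digons} (equivalently Corollary \ref{cor:cutedge}) in which every edge of the forest is a cut-edge, and your edge-by-edge transformation --- including the observation that symmetry of cospectrality lets you promote an arc back to a digon, and that each step affects only the chosen edge --- fills in exactly the details the paper leaves implicit. Your alternative route via Theorem \ref{thm:hcp} is also sound (for a forest every basic subgraph is a matching, so $c(B)=r(B)=0$ and $s(B)$ depends only on $\Gamma(X)$, making every coefficient of $\phi(X,t)$ orientation-independent), but it is the same machinery rather than a genuinely different one, since the paper derives Corollary \ref{cor:cutedge} and Proposition \ref{prop:cut of digons} from that very theorem.
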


All of the above operations that preserve the $H$-spectrum can be generalized as discussed next. It all amounts to a simple similarity transformation that is based on the structure of Theorem \ref{thm:rhodelta}(i). Suppose that the vertex-set of $X$ is partitioned in four (possibly empty) sets, $V(X) = V_1\cup V_{-1}\cup V_i\cup V_{-i}$. An arc $xy$ or a digon $\{x,y\}$ is said to be of \emph{type} $(j,k)$ for $j,k\in\{\pm1, \pm i\}$ if $x\in V_j$ and $y\in V_k$. The partition is said to be \emph{admissible} if the following conditions hold:
\begin{enumerate}[(a)]
  \item There are no digons of types $(1,-1)$ or $(i,-i)$.
  \item All edges of types $(1,i),(i,-1),(-1,-i),(-i,1)$ are contained in digons.
\end{enumerate}
A \emph{four-way switching} with respect to a partition $V(X) = V_1\cup V_{-1}\cup V_i\cup V_{-i}$ is the operation of changing $X$ into the digraph $X'$ by making the following changes:
\begin{enumerate}[(a)]
  \item reversing the direction of all arcs of types $(1,-1),(-1,1),(i,-i),(-i,i)$;
  \item replacing each digon of type $(1,i)$ with a single arc directed from $V_1$ to $V_i$ and replacing each digon of type $(-1,-i)$ with a single arc directed from $V_{-1}$ to $V_{-i}$;
  \item replacing each digon of type $(1,-i)$ with a single arc directed from $V_{-i}$ to $V_1$ and replacing each digon of type $(-1,i)$ with a single arc directed from $V_i$ to $V_{-1}$;
  \item replacing each non-digon of type $(1,-i),(-1,i),(i,1)$ or $(-i,-1)$ with the digon.
\end{enumerate}

\begin{figure}
\centering
\includegraphics[scale=1.0]{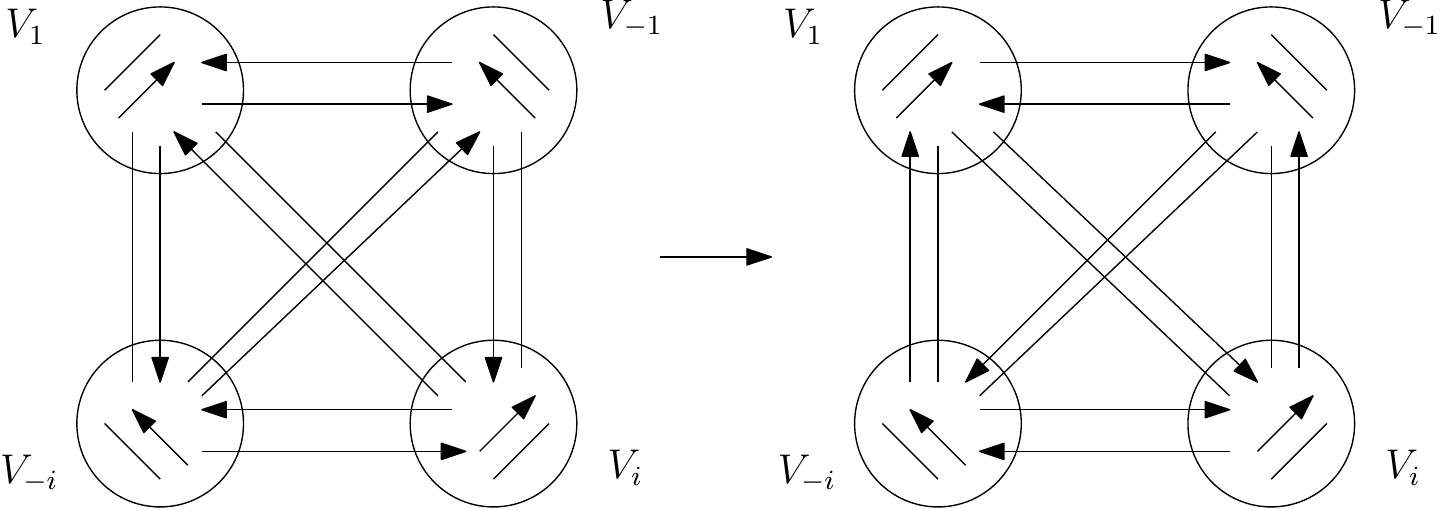}
\caption{Four-way switching on the admissible edges.}\label{fig:four-way-switching}
\end{figure}

\begin{theorem}
\label{thm:generalized switch}
If a partition $V(X) = V_1\cup V_{-1}\cup V_i\cup V_{-i}$ is admissible, then the digraph obtained from $X$ by the four-way switching is cospectral with $X$.
\end{theorem}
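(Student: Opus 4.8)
The plan is to realize the four-way switching as a unitary diagonal similarity, exactly as in the proofs of Propositions \ref{prop:localreversal} and \ref{prop:cut of digons}, but now using all four fourth roots of unity on the diagonal. Define the diagonal matrix $M$ indexed by $V(X)$ by $M_{vv} = j$ whenever $v \in V_j$, for $j \in \{\pm 1, \pm i\}$. Each diagonal entry lies on the unit circle, so $M$ is unitary, $M^* = M^{-1}$, and $M^* H(X) M$ is unitarily similar to $H(X)$; in particular the two matrices are cospectral. It therefore suffices to prove the entrywise identity $M^* H(X) M = H(X')$, where $X'$ is the digraph produced by the four-way switching.

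Since $M$ is diagonal, the similarity acts entrywise: for $u \in V_j$ and $v \in V_k$ we have $(M^* H(X) M)_{uv} = \overline{M_{uu}}\, H_{uv}\, M_{vv} = \mu(j,k)\, H_{uv}$, where $\mu(j,k) := \overline{j}\,k$. First I would tabulate $\mu(j,k)$ over all sixteen ordered pairs: it equals $1$ when $j=k$, equals $-1$ on the opposite pairs $\{1,-1\}$ and $\{i,-i\}$, and equals $\pm i$ on the eight remaining mixed pairs. Then I would check, type by type, that multiplying $H_{uv}$ by $\mu(j,k)$ reproduces precisely the local change prescribed by the switching: the factor $-1$ reverses an arc (case (a)); a factor $i$ or $-i$ sends a digon $(H_{uv}=1)$ to a single arc in the direction dictated by the sign (cases (b) and (c)); and the same factor sends an appropriately oriented single arc $(H_{uv}=\pm i)$ to a digon (case (d)). Entries within a part $(\mu=1)$ and absent edges $(H_{uv}=0)$ are left unchanged.

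The role of admissibility is to guarantee that the transformed matrix is again a genuine Hermitian adjacency matrix, i.e.\ that no entry becomes $-1$. The factor $-1$ creates an illegal entry only when applied to a digon, that is, to a digon on an opposite pair, which condition (a) forbids. A mixed factor $\pm i$ applied to a digon always yields a legal $\pm i$, but applied to a single arc it yields $-1$ for exactly one of the two possible arc directions on each mixed pair; these offending directions are precisely the types $(1,i),(i,-1),(-1,-i),(-i,1)$, which condition (b) requires to be contained in digons. Hence (a) and (b) rule out exactly the two configurations that would spoil the entrywise values, while on every remaining configuration $\mu(j,k)\,H_{uv} \in \{0,1,\pm i\}$ and equals the corresponding entry of $H(X')$.

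The only place where care is needed is the bookkeeping in the second step: matching each of the four directed mixed types against the correct line of the switching definition, and keeping track of which endpoint carries the conjugated diagonal factor so that the resulting arc points the right way. This is a finite check over the four multiplier classes, involving no estimation; once it is tabulated, the identity $M^* H(X) M = H(X')$ follows, and the cospectrality is then immediate from unitary similarity.
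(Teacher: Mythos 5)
Your proposal is correct and follows essentially the same route as the paper: the paper also conjugates $H(X)$ by the diagonal matrix $S$ with $S_{vv}=j$ for $v\in V_j$ (noting $S^{-1}HS$ has entries $H(u,v)S(v)/S(u)$, which is exactly your $\overline{\mu}$-free multiplier $\bar{\jmath}k$), verifies type by type that the result is $H(X')$, and invokes admissibility precisely to rule out entries equal to $-1$. Your identification of the offending configurations (digons on opposite pairs and the four arc directions $(1,i),(i,-1),(-1,-i),(-i,1)$) matches the paper's tabulation via its two $4\times 4$ model matrices.
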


\begin{proof}
We use a similarity transformation with the diagonal matrix $S$ whose $(v,v)$-entry is equal to $j\in\{\pm1, \pm i\}$ if $v\in V_j$. The entries of the matrix $H' = S^{-1}HS$ are given by the formula
$$
    H'(u,v) = H(u,v) S(v)/S(u).
$$
It is clear that $H'$ is Hermitian and that its non-zero elements are in $\{\pm1, \pm i\}$. Note that the entries within the parts of the partition remain unchanged. Digons of type $(1,-1)$ would give rise to the entries $-1$, but since these digons are excluded for admissible partitions, this does not happen. On the other hand, any other entry in this part is multiplied by $-1$, and thus directed edges of types $(1,-1)$ or $(-1,1)$ just reverse their orientation. Similar conclusions are made for other types of edges. Admissibility is needed in order that $H'$ has no entries equal to $-1$. It turns out that $H'$ is the Hermitian adjacency matrix of $X'$. The details are easily read off from the following similarity using the diagonal $4\times 4$ matrix $S=\diag(1,-1,i,-i)$. The first one shows how digons are transformed:
$$
  S^{-1}\ \left[
             \begin{array}{cccc}
                1 & 1 & 1 & 1 \\
                1 & 1 & 1 & 1 \\
                1 & 1 & 1 & 1 \\
                1 & 1 & 1 & 1 \\
             \end{array}
          \right] \ S \ = \
          \left[
             \begin{array}{rrrr}
                1 & -1 & i & -i \\
                -1 & 1 & -i & i \\
                -i & i & 1 & -1 \\
                i & -i & -1 & 1 \\
             \end{array}
          \right].
$$
The second one shows the result for non-digon arcs:
$$
  S^{-1}\ \left[
             \begin{array}{cccc}
                i & i & i & i \\
                i & i & i & i \\
                i & i & i & i \\
                i & i & i & i \\
             \end{array}
          \right] \ S \ = \
          \left[
             \begin{array}{rrrr}
                i & -i & -1 & 1 \\
                -i & i & 1 & -1 \\
                1 & -1 & i & -i \\
                -1 & 1 & -i & i \\
             \end{array}
          \right].
$$
\end{proof}

Note the resemblance of Theorem \ref{thm:generalized switch} with Theorem \ref{thm:rhodelta}(i).

\subsection*{Digraphs that are $H$-cospectral with $K_n$}

In the undirected case, complete graphs have the property of being determined by their spectrum. In the case of digraphs with the Hermitian adjacency matrix it turns out that for each $n$, there are exactly $n$ non-isomorphic digraphs with the same $H$-spectrum as $K_n$. They can all be obtained by applying a single transformation of Proposition \ref{prop:cut of digons}.

For $a=1,\dots,n-1$, let $Y_{a,n-a}$ be the digraph of order $n$ that is obtained from the complete digraph $\digr(K_n)$ by replacing the digons between any of the first $a$ vertices (forming the set $A$) and any of the remaining $n-1$ vertices (forming the set $B$ by the single arc from $A$ to $B$ joining the same two vertices.
In other words, $Y_{a,n-a}$ consists of a copy of $\digr(K_a)$ and a copy of $\digr(K_{n-a})$, with all possible arcs from $\digr(K_a)$ to $\digr(K_{n-a})$. Figure \ref{fig:hfam-cospecwithkn} shows $Y_{2,3}$.

\begin{figure}[ht!]
\centering
\begin{tikzpicture}
	\begin{pgfonlayer}{nodelayer}
		\node [style=new] (1) at (-0.75, 1) {};
		\node [style=new] (2) at (0.75, 1) {};
		\node [style=new] (3) at (-1.75, -0.75) {};
		\node [style=new] (4) at (0, -0.75) {};
		\node [style=new] (5) at (1.75, -0.75) {};
		\node [style=n] (6) at (-1, 1.4) {$v_1$};
		\node [style=n] (7) at (1, 1.4) {$v_2$};
		\node [style=n] (8) at (-2.2, -1) {$w_1$};
		\node [style=n] (9) at (2.2, -1) {$w_3$};
		\node [style=n] (10) at (0, -1.15) {$w_2$};
	\end{pgfonlayer}
	\begin{pgfonlayer}{edgelayer}
		\draw [directed, bend left, looseness=1.00] (1) to (2);
		\draw [directed, bend left, looseness=1.00] (3) to (4);
		\draw [directed, bend left, looseness=1.00] (4) to (5);
		\draw [directed, bend left, looseness=1.00] (2) to (1);
		\draw [directed, bend left=60, looseness=1.25] (5) to (3);
		\draw [directed, bend right=60, looseness=0.75] (3) to (5);
		\draw [directed, bend left, looseness=1.00] (5) to (4);
		\draw [directed, bend left, looseness=1.00] (4) to (3);
		\draw [directed] (1) to (3);
		\draw [directed] (1) to (4);
		\draw [directed] (1) to (5);
		\draw [directed] (2) to (5);
		\draw [directed] (2) to (4);
		\draw [directed] (2) to (3);
	\end{pgfonlayer}
\end{tikzpicture}
\caption{Digraph $Y_{2,3}$ is $H$-cospectral to $K_{5}$.} \label{fig:hfam-cospecwithkn}
\end{figure}
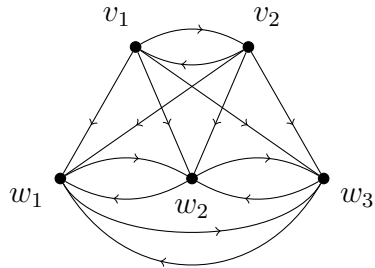

Proposition \ref{prop:cut of digons} implies that the digraph $Y_{a,n-a}$ as defined above is $H$-cospectral with $K_n$.

\begin{proposition}
\label{prop:hfam-cospeckn}
For each $n$, there are precisely $n$ non-isomorphic digraphs that have the same $H$-spectrum as $K_n$. These are the digraphs $K_n$ and $Y_{a,n-a}$, where $a = 1,\ldots, n-1$.
\end{proposition}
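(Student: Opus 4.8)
The plan is to prove the statement in two halves: first that the $n$ listed digraphs are pairwise non-isomorphic, and then that every digraph with the prescribed $H$-spectrum is isomorphic to one of them. That each $Y_{a,n-a}$ is $H$-cospectral with $K_n$ has already been recorded (it is Proposition \ref{prop:cut of digons} applied to the digon-cut separating $A$ from $B$), and since $\Gamma(K_n)$ is complete we have $\sigma_H(K_n)=\{(n-1)^{(1)},(-1)^{(n-1)}\}$. For the non-isomorphism I would use the out-degree sequence: in $Y_{a,n-a}$ every vertex of $A$ has out-degree $(a-1)+(n-a)=n-1$ while every vertex of $B$ has out-degree $n-a-1<n-1$, so exactly $a$ vertices have out-degree $n-1$; in $K_n$ all $n$ do. Thus the quantity ``number of vertices of out-degree $n-1$'' takes the distinct values $1,\dots,n-1,n$ on $Y_{1,n-1},\dots,Y_{n-1,1},K_n$, so these $n$ digraphs are pairwise non-isomorphic.

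For the converse, let $X$ be any digraph with $\sigma_H(X)=\{(n-1)^{(1)},(-1)^{(n-1)}\}$ and $H=H(X)$. First I would pin down the underlying graph: $\tr(H^2)=\sum\lambda_j^2=(n-1)^2+(n-1)=n(n-1)$, and comparing with Proposition \ref{prop:traceh3}(ii) gives $e=\binom{n}{2}$ edges, so $\Gamma(X)\cong K_n$. Next I would use the cube: $\tr(H^3)=(n-1)^3-(n-1)=n(n-1)(n-2)=6\binom{n}{3}$, so Proposition \ref{prop:traceh3}(iii) yields $x_2+x_3+x_4-x_1=\binom{n}{3}$, where $x_j$ counts induced copies of the triangles $X_j$ of Figure \ref{fig:c3str3}. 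Because $\Gamma(X)=K_n$, every one of the $\binom{n}{3}$ triples of vertices induces a triangle; writing $t$ for the number of triples whose triangle has an odd number of non-digon arcs, we also have $x_1+x_2+x_3+x_4+t=\binom{n}{3}$. Subtracting the two equalities gives $2x_1+t=0$, hence $x_1=t=0$. In words: no triple induces $X_1=K_3'$, and every triple induces an even-arc triangle, i.e.\ one of $X_2,X_3,X_4$.

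The heart of the argument is to upgrade this local information to global structure. Let $D'$ be the simple graph on $V(X)$ whose edges are the non-digon edges of $\Gamma(X)$; its complement inside $K_n$ is the digon graph $G(X)$. The conclusion $t=0$ says precisely that every triple spans $0$ or $2$ edges of $D'$, since the number of non-digon arcs in a triple equals the number of $D'$-edges it spans. I would then show that the relation ``$u,v$ are non-adjacent in $D'$'' (each vertex related to itself) is an equivalence relation: transitivity holds because if $uv,vw\notin E(D')$ then the triple $\{u,v,w\}$ already has two non-edges, so evenness forces $uw\notin E(D')$. Hence $D'$ is complete multipartite; a triangle of $D'$ would be a triple spanning three non-digon arcs, which is odd and excluded by $t=0$, so there are at most two parts. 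Thus $D'=K_{A,B}$ is complete bipartite and the digons form $\digr(K_A)\cup\digr(K_B)$. Finally I would fix the orientations of the cross arcs: for $a_1,a_2\in A$ and $b\in B$ the triple $\{a_1,a_2,b\}$ is $X_2$ or $X_3$ (never $X_1$), which forces the two arcs between $b$ and $A$ to point the same way; running this over all pairs shows that once a single cross arc points from $A$ to $B$, all of them do (and symmetrically from $B$ to $A$). Therefore $X\cong Y_{|A|,|B|}$, or $X\cong K_n$ when one part is empty, completing the list.

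I expect the main obstacle to be this global structural step rather than the trace bookkeeping: extracting from ``every triangle is $X_2$, $X_3$ or $X_4$'' both that the non-digon edges form a complete bipartite graph and that all cross arcs can be coherently oriented one way. The equivalence-relation trick makes the bipartite claim clean, but care is needed in the degenerate cases $|A|\in\{0,1\}$ (where there are no pairs inside $A$ to exploit, so one instead uses pairs inside $B$), and in checking that the two globally consistent orientations yield exactly $Y_{a,n-a}$ and its converse $Y_{n-a,a}$, so that no orientation produces a genuinely new digraph.
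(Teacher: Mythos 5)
Your proposal is correct, and its converse direction takes a genuinely different route from the paper's. The paper's own proof is essentially three lines long: from $\rho(X)=n-1$ and $\rho(X)\le \Delta(\Gamma(X))\le n-1$ it invokes the equality case of Theorem \ref{thm:rhodelta}, which at once gives $\Gamma(X)\cong K_n$ and the four-part structure of case (i) of that theorem; since $\Gamma(X)$ is complete but that structure allows no edges between $V_1$ and $V_{-1}$, nor between $V_i$ and $V_{-i}$, at most two cyclically consecutive parts can be nonempty, and this is precisely $\digr(K_n)$ or $Y_{a,n-a}$. (Pedantically, the equality characterization in Theorem \ref{thm:rhodelta} assumes weak connectivity, but a component realizing an eigenvalue of modulus $n-1$ must contain all $n$ vertices, so this is harmless; your route never needs the point.) What you do instead is transport the method the paper itself uses for Proposition \ref{prop:hfam-minuskn}: $\tr(H^2)$ forces $\Gamma(X)\cong K_n$; comparing $\tr(H^3)=6\binom{n}{3}$ with the census of all $\binom{n}{3}$ triples gives $2x_1+t=0$, hence $x_1=t=0$; and then a local-to-global argument (non-adjacency in the non-digon graph is an equivalence relation, so that graph is complete multipartite, triangle-free by $t=0$, hence complete bipartite, and the exclusion of $X_1=K_3'$ propagates a coherent orientation of the cross arcs). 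Your equivalence-relation step and the degenerate cases $|A|\le 1$ are handled correctly, and the two coherent orientations indeed give $Y_{a,n-a}$ and its converse $Y_{n-a,a}$, both already in the list, so nothing new arises. The trade-off: the paper's argument is far shorter and exhibits the classification as a direct corollary of the spectral-radius equality analysis, while yours is self-contained modulo the trace formulas of Proposition \ref{prop:traceh3} and shows that the triangle-census technique handles the positive extremal spectrum $\{n-1,(-1)^{(n-1)}\}$ just as well as it handled the negative one.
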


\begin{proof}
For $a \geq 1$ and $b=n-a$, we observe that the out-degree of a vertex of $Y_{a,b}$ is either $n-1$ or $b-1$. Then, for $c,d \geq 1$, the digraphs $Y_{a,b}$ and $Y_{c,d}$ have different sets of degrees unless $a=c$ and $b=d$. Thus, $Y_{a,b}$ is isomorphic to $Y_{c,d}$ if and only if $a=c$ and $b=d$. From Proposition \ref{prop:hfam-cospeckn}, we see that the digraphs
$Y_{a,n-a}$, $a\in \{1,\ldots n-1\}$,
are cospectral with $K_n$. Together with $K_n$ itself, there are at least $n$ non-isomorphic digraphs in the $H$-cospectral class containing $K_n$.

Let $X$ be a digraph which is $H$-cospectral with $K_n$. We will show that $X$ is isomorphic to one of $K_n$ and $Y_{a,b}$, where $a \in \{1,\ldots, n-1\}$ and $b = n-a$. Note that $\rho(X)=n-1$, thus by Theorem \ref{thm:rhodelta}, $\Gamma(X)=K_n$ and $X$ has the structure as in part (i) of that theorem. Since $\gamma(X)$ is complete, this can only be when at most two of the sets $V_j$ ($j\in\{\pm 1,\pm i\}$) are nonempty. Clearly, this gives either the digraph $\digr(K_n)$ or one of the digraphs $Y_{a,n-a}$.
\end{proof}

\subsection{Cospectral classes of digraphs whose underlying graph is a cycle}

At the end of this section, we determine the cospectral classes of all digraphs whose underlying graph is a cycle.
The directed cycle on $n$ vertices is denoted $D_n$ and $\widetilde{C}_n$ is the oriented cycle obtained from $D_n$ by reversing one arc. Further, we let $\widetilde{C}_n'$ be the digraph obtained from $D_n$ by replacing one arc by a digon, and $\widetilde{C}_n''$ be the digraph obtained from $D_n$ by replacing two consecutive arcs, the first one by reversing the arc, and the second one by adding the reverse arc, i.e. making it the digon.

We start with oriented cycles. We consider two digraphs $X$ and $Y$ to be \emph{equivalent under taking local reversals} if $Y$ can be obtained from $X$ by taking a series of local reversals.

\begin{lemma}
\label{lem:oddcycle}
All orientations of an odd cycle $C_{2m+1}$ are equivalent under taking local reversals. Each orientation of an even cycle $C_{2m}$ is equivalent either to $D_{2m}$ or to $\widetilde{C}_{2m}$ under taking local reversals.
\end{lemma}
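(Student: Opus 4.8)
The plan is to analyze how local reversals act on orientations of a cycle by tracking a natural invariant. First I would fix a reference orientation of the cycle $C_\ell$ (say, all arcs pointing in the forward direction around the cycle, giving the directed cycle $D_\ell$) and record any orientation $X$ by a sign vector: for each edge $e_k$ of the cycle, set $\eps_k = +1$ if the arc agrees with the forward direction and $\eps_k = -1$ if it is reversed. A local reversal at a single vertex $v$ flips the orientation of both arcs incident with $v$, hence multiplies exactly two consecutive entries $\eps_{k-1}, \eps_k$ by $-1$. The key observation is therefore that the product $\prod_{k=1}^{\ell} \eps_k$ is invariant under local reversals, since each reversal flips an even number (two) of the signs.

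Next I would show that this product is the \emph{only} obstruction, i.e. that two orientations are equivalent under local reversals if and only if they have the same value of $\prod_k \eps_k \in \{+1,-1\}$. The ``only if'' direction is the invariance just noted. For the ``if'' direction, I would argue constructively: starting from an arbitrary sign vector, I can use local reversals to cancel reversed arcs in pairs. Concretely, if two edges $e_a$ and $e_b$ carry $\eps = -1$, then applying local reversals at every vertex along one of the two arcs of the cycle joining them flips each intermediate sign twice (leaving it unchanged) and flips $\eps_a, \eps_b$ once each, turning both into $+1$. Repeating this, any even number of $-1$'s can be eliminated entirely, reducing $X$ to $D_\ell$; any odd number of $-1$'s can be reduced to a single $-1$ on a chosen edge, reducing $X$ to $\widetilde{C}_\ell$.

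Now I would split into the two parity cases for $\ell$. When $\ell = 2m+1$ is odd, I claim every orientation has the \emph{same} invariant, so all orientations are equivalent. The cleanest way to see this is that reversing the global orientation of the reference cycle (or, equivalently, observing that $D_\ell$ and $\widetilde{C}_\ell$ have differently-signed products only affects things through parity) — more carefully, I would note that the relevant equivalence must also account for the choice of ``forward'' direction, and for odd cycles the directed cycle $D_{2m+1}$ can itself be brought to an orientation with one reversed arc. Indeed, a single local reversal at one vertex of $D_{2m+1}$ produces an orientation with exactly two reversed arcs; iterating along the cycle and using that the cycle has odd length lets one realize both parity classes as local-reversal images of one another. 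This collapses the two classes into one, giving the first statement. When $\ell = 2m$ is even, the two parity classes are genuinely distinct and are represented by $D_{2m}$ (product $+1$, zero reversed arcs) and $\widetilde{C}_{2m}$ (product $-1$, one reversed arc), giving the second statement.

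The main obstacle I anticipate is the odd-cycle case: one must be careful that the invariant $\prod_k \eps_k$ really does take only one value across all orientations of an odd cycle once the freedom in choosing the reference orientation is accounted for. The subtlety is that the number of reversed arcs relative to a \emph{fixed} reference is not itself invariant mod $2$ in a naive sense for odd cycles, because there is an additional symmetry (reversing all arcs, i.e. passing to the converse, changes the count of reversed arcs by $\ell$, which is odd). I would make this precise by computing the invariant for $D_{2m+1}$ and for $\widetilde{C}_{2m+1}$ directly and exhibiting an explicit sequence of local reversals carrying one to the other, so that the apparent two classes genuinely coincide for odd length while remaining separate for even length. The rest of the argument is the routine sign-cancellation bookkeeping sketched above.
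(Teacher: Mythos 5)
Your encoding of an orientation by a sign vector $(\epsilon_1,\dots,\epsilon_\ell)$ relative to $D_\ell$, the observation that a local reversal flips exactly two signs so that $\prod_k \epsilon_k$ is invariant, and the pairwise cancellation of $-1$'s by reversing along a path are all correct, and this is a genuinely different route from the paper's proof (the paper just sweeps greedily around the cycle, reversing at $x_2,x_3,\dots,x_n$ to normalize all but one arc, and then finishes the odd case with one extra batch of reversals). Your classification is in fact finer than what the paper establishes: it identifies the labeled equivalence classes exactly as the two values of the product.

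However, your handling of the odd case contains a claim that is false and that contradicts your own invariant. You assert that "iterating along the cycle and using that the cycle has odd length lets one realize both parity classes as local-reversal images of one another," and you propose to finish by "exhibiting an explicit sequence of local reversals carrying one to the other." No such sequence exists for any $\ell$: every local reversal preserves $\prod_k \epsilon_k$, and $D_\ell$, $\widetilde{C}_\ell$ have products $+1$ and $-1$ respectively, so as labeled digraphs the two classes are never connected by local reversals. What is true --- and what you gesture at with your remark about the converse --- is that the equivalence in the lemma is up to digraph isomorphism. For odd $\ell$, the all-reversed orientation has product $(-1)^\ell=-1$, hence lies in the class of $\widetilde{C}_\ell$, and it is isomorphic to $D_\ell$ (it is the directed cycle traversed the other way); symmetrically, the orientation with $\ell-1$ reversed arcs lies in the class of $D_\ell$ and is isomorphic to $\widetilde{C}_\ell$ via a reflection of the cycle. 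Substituting that one observation for the false claim closes the gap: every orientation of an odd cycle is carried by local reversals to a digraph isomorphic to $D_{2m+1}$. This is precisely what the paper's construction does implicitly --- its reversals at $x_2,x_4,\dots,x_{2m}$ turn $\widetilde{C}_{2m+1}$ into the reverse directed cycle, which equals $D_{2m+1}$ only after relabeling. For even $\ell$ your argument already gives everything the lemma asserts; if you also want the (true, but not claimed) statement that $D_{2m}$ and $\widetilde{C}_{2m}$ are genuinely inequivalent up to isomorphism, you need the further remark that reflections multiply the product by $(-1)^\ell$, which is harmless exactly when $\ell$ is even.
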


\begin{proof}
Let $X$ be any orientation of $C_n$ on vertices $V = \{x_1,\ldots,x_n\}$ joined in the cyclic order.
For $j=2,3,\dots,n$ we consecutively make the local reversal at $x_j$ if the current graph obtained from $X$ has the arc $x_jx_{j-1}$, thus changing this arc to $x_{j-1}x_j$. This way we obtain either $D_n$ or $\widetilde{C}_n$. If $n$ is odd and we have $\widetilde{C}_n$, then we make additional reversals at $x_2,x_4,\dots, x_{n-1}$, thus transforming $\widetilde{C}_n$ into $D_n$. This completes the proof.
\end{proof}

Lemma \ref{lem:oddcycle} implies that every orientation of $C_n$ is $H$-cospectral with $D_{n}$ or with $\widetilde{C}_{n}$. We note that for even $n$, the spectra of $D_n$  and $\widetilde{C}_{n}$ are not equal (see Section~\ref{sec:hfam}).

\begin{proposition}
\label{prop:c_n}
Every digraph whose underlying graph is isomorphic to the $n$-cycle $C_n$ is $H$-cospectral to one of the following: $C_n$, $\widetilde{C}_n$, $\widetilde{C}_n'$ (when $n$ is even), and $\widetilde{C}_n''$ (when $n$ is odd).
\end{proposition}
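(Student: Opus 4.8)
The plan is to pin down the characteristic polynomial of $H(X)$ for an arbitrary digraph $X$ with $\Gamma(X)\cong C_n$ by means of the Sachs-type formula of Theorem \ref{thm:hcp}, and to observe that almost all of its coefficients are forced. Fix once and for all a cyclic orientation of $C_n$ and record, for such an $X$, the number $d$ of digons, the number $a=n-d$ of non-digon arcs, and the number $b$ of arcs pointing backward along the fixed orientation. The first step is to note that every basic subgraph of $\Gamma(X)=C_n$ of order $j<n$ is a disjoint union of single edges (a matching), since the only cycle contained in $C_n$ is $C_n$ itself; and a single edge contributes the factor $H(u,v)H(v,u)=1$ regardless of whether it is a digon or an arc. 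Hence every coefficient $c_j$ with $j>0$ depends only on $n$ (it agrees with the corresponding coefficient of $A(C_n)$), and the only coefficient that can carry information about the orientation is the constant term $c_0$.

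The second step is to evaluate the cycle's contribution to $c_0$. The full cycle is a basic subgraph only when $a$ is even, and in that case Theorem \ref{thm:hcp} — equivalently, the two rotations in the Leibniz expansion of $\det H$ — contributes $2(-1)^{r+s}$, whose relevant parity works out to $(-1)^{b+a/2+1}$. Consequently
\[
   c_0 \;=\; c_0^{\mathrm{match}} \;+\; \chi(X), \qquad
   \chi(X)=\begin{cases} 0, & a \text{ odd},\\[1mm] 2(-1)^{\,b+a/2+1}, & a \text{ even},\end{cases}
\]
where $c_0^{\mathrm{match}}$ depends only on $n$. Thus $\phi(X,t)$ is completely determined by the single invariant $\chi(X)\in\{0,+2,-2\}$, and two digraphs with underlying graph $C_n$ are $H$-cospectral if and only if their values of $\chi$ coincide. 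In particular there are at most three $H$-cospectral classes.

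The third step is to exhibit representatives realizing each attainable value of $\chi$, separating on the parity of $n$ because $a=n-d$ is even exactly when $d\equiv n\pmod 2$. When $X$ is an orientation (so $d=0$), Lemma \ref{lem:oddcycle} already reduces $X$ to $D_n$ or $\widetilde{C}_n$; these realize $\chi=0$ for odd $n$ (where $a=n$ is odd) and the two nonzero values for even $n$ (distinguished by $b=0$ versus $b=1$). When $X$ has digons, one passes to the single-digon configurations $\widetilde{C}_n'$ and $\widetilde{C}_n''$, which carry the remaining value(s) of $\chi$, and the all-digon cycle $C_n$ then coincides with whichever representative shares its value of $\chi$. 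Matching each representative to its class is the one delicate place: the sign $(-1)^{b+a/2+1}$ oscillates with $n\bmod 4$, so one must verify case by case that the chosen representatives occupy three \emph{distinct} classes rather than accidentally collapsing two of them (for instance, checking that the orientation and single-digon representatives do not share a value of $\chi$). This bookkeeping — reconciling the listed representatives with the value of $\chi$ across the residues of $n$ modulo $4$ — is the main obstacle; everything else is forced by the coefficient computation above.

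As an alternative to the coefficient computation, the reductions can be performed entirely with the spectrum-preserving operations of this section: Proposition \ref{prop:localreversal} normalizes the orientation of the non-digon arcs, Proposition \ref{prop:cut of digons} eliminates digons two at a time (replacing two adjacent digons by the two arcs into their common vertex), and Theorem \ref{thm:generalized switch} slides digons along the cycle. Either route yields that every digraph with underlying graph $C_n$ is $H$-cospectral to one of the three listed representatives.
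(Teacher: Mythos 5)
Your first two steps are correct, and they take a genuinely different route from the paper: the paper reduces an arbitrary digraph on $C_n$ by the switching operations of Propositions \ref{prop:localreversal} and \ref{prop:cut of digons} together with Lemma \ref{lem:oddcycle}, whereas you extract a \emph{complete} cospectrality invariant, namely $\phi(X,t)$ equals a polynomial depending only on $n$ (the matching contributions) plus the cycle term $\chi(X)\in\{0,+2,-2\}$ in the constant coefficient, so two digraphs on $C_n$ are $H$-cospectral if and only if they have the same $\chi$. Your sign $\chi=2(-1)^{b+a/2+1}$ (for $a$ even) is correct for every $n$; it is what the Leibniz expansion gives, and it even corrects a sign slip in Theorem \ref{thm:hcp} as printed, whose $(-1)^{r(B)+s(B)}$ mishandles odd cycles (test it on the undirected $K_3$). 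The genuine gap is your third step: you defer the matching of representatives to classes as ``bookkeeping'' and then assert the proposition anyway. That check is not routine, because it \emph{fails}: the statement as written is false for $n\equiv 2,3\pmod{4}$, so neither your proof nor the paper's can be completed. For odd $n$ one gets $\chi(C_n)=-2$, $\chi(\widetilde{C}_n)=0$, $\chi(\widetilde{C}_n'')=2(-1)^{(n-1)/2}$, $\chi(\widetilde{C}_n')=2(-1)^{(n-1)/2+1}$; hence for $n\equiv3\pmod{4}$ the list $\{C_n,\widetilde{C}_n,\widetilde{C}_n''\}$ realizes only $\{-2,0\}$ and misses the class of $\widetilde{C}_n'$. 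Already at $n=3$: $\widetilde{C}_3'\cong K_3'$ has spectrum $\{1^{(2)},-2\}$, and the paper's own Table \ref{tab:h3s} shows $K_3'$ alone in its cospectral class, while $C_3$, $\widetilde{C}_3$, $\widetilde{C}_3''$ have spectra $\{2,(-1)^{(2)}\}$, $\{0,\pm\sqrt{3}\}$, $\{2,(-1)^{(2)}\}$. For even $n$, $\chi(D_n)=2(-1)^{n/2+1}$ and $\chi(\widetilde{C}_n)=2(-1)^{n/2}$, so for $n\equiv2\pmod{4}$ the list $\{C_n,\widetilde{C}_n,\widetilde{C}_n'\}$ misses the class of $D_n$: concretely, $D_6$ has spectrum $\{0^{(2)},(\pm\sqrt{3})^{(2)}\}$, while $C_6$ and $\widetilde{C}_6$ are cospectral with spectrum $\{\pm2,(\pm1)^{(2)}\}$ and $H(\widetilde{C}_6')$ has nonzero determinant.

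What your invariant actually proves is the corrected statement: for every $n$ there are exactly three cospectral classes of digraphs with underlying graph $C_n$, represented by $D_n$, $\widetilde{C}_n$, $\widetilde{C}_n'$ when $n$ is even and by $\widetilde{C}_n$, $\widetilde{C}_n'$, $\widetilde{C}_n''$ when $n$ is odd, the all-digon cycle $C_n$ falling into the class of $D_n$, $\widetilde{C}_n$, $\widetilde{C}_n'$, or $\widetilde{C}_n''$ according as $n\equiv0,2,1,3\pmod{4}$. The same defect sits in the paper's own proof, whose claims that $\widetilde{C}_n''\,$ is cospectral with $C_n$ for even $n$ (false for all even $n$, since $\widetilde{C}_n''$ then has an odd number of non-digon arcs, whence $\chi=0$) and that $\widetilde{C}_n'$ is cospectral with $C_n$ for odd $n$ (false for $n\equiv3\pmod{4}$) are exactly the assertions your deferred bookkeeping would have had to verify. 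So your method is sound, and sharper than the paper's switching argument because it decides cospectrality rather than merely producing cospectral pairs; but your closing sentence, asserting that the case check will confirm the three listed representatives, is precisely the step that fails.
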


\begin{proof}
Any two edges of $C_n$ form an edge-cut. By Proposition \ref{prop:cut of digons}, any edge-cut consisting of two digons can be changed to a directed cut without changing its $H$-spectrum, and any cut consisting of two arcs can be changed to its reverse by Proposition \ref{prop:localreversal}. Thus, we may assume that we have at most one digon. If there are no digons, Lemma \ref{lem:oddcycle} completes the proof. On the other hand, having one digon, we can make local reversals in the same way as in the proof of Lemma \ref{lem:oddcycle} to obtain either $\widetilde{C}_n'$ or $\widetilde{C}_n''$. If $n$ is even, then $\widetilde{C}_n''$ is cospectral to $C_n$, and if $n$ is odd, then $\widetilde{C}_n'$ is cospectral with $C_n$, so this gives the three cases of the proposition (details left to the reader).
\end{proof}

The $H$-eigenvalues of oriented cycles are treated fully in Section \ref{sec:hfam}.

\section{Digraphs with small spectral radius}
\label{sec:hevalspm1}

In the previous sections, we have seen that the $H$-eigenvalues of digraphs behave differently and somewhat strangely compared with the $A$-eigenvalues of graphs. It appears that graph invariants like diameter, minimum degree, and number of connected components cannot be bounded by the $H$-spectrum. However, since $H$ is Hermitian, we may use the interlacing theorem. Using interlacing, we can characterize all digraph with all $H$-eigenvalues small in absolute value. First we will look at a special case where all $H$-eigenvalues are equal to $1$ or $-1$, then we will consider the general case. Note that $mX$ where $X$ is a digraph denotes the union of $m$ disjoint copies of $X$.

\begin{theorem}
\label{thm:hevalpm1} A digraph $X$ has the property that $\lambda \in \{-1,1\}$ for each $H$-eigenvalue  $\lambda$ of $X$ if and only if $\,\Gamma(X) \cong m K_2$ for some $m$.
\end{theorem}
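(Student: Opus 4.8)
The plan is to convert the spectral hypothesis into a clean matrix identity and then read off the degree sequence of $\Gamma(X)$ directly from the diagonal of $H^2$, where $H = H(X)$. The whole argument hinges on Lemma \ref{lem:Hdegs}, which tells us that $(H^2)_{uu} = d(u)$, the degree of $u$ in the underlying graph.

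For the backward direction, suppose $\Gamma(X) \cong m K_2$. Then $V(X)$ splits into $m$ pairs, with an edge inside each pair and no edges between pairs, so after reordering the vertices $H$ is block-diagonal with $m$ Hermitian blocks of size $2$, one per edge. Each block has the form $\left(\begin{smallmatrix} 0 & z \\ \bar z & 0 \end{smallmatrix}\right)$ with $|z| = 1$ (namely $z = 1$ for a digon and $z = \pm i$ for a single arc), and each such block has characteristic polynomial $t^2 - 1$, hence eigenvalues $\pm 1$. Therefore every $H$-eigenvalue of $X$ lies in $\{-1, 1\}$.

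For the forward direction, the key step is to observe that the eigenvalue condition forces $H^2 = I$. Indeed, by Proposition \ref{prop:real} the matrix $H$ is diagonalizable with real eigenvalues, so its minimal polynomial has distinct roots; if every eigenvalue $\la$ satisfies $\la^2 = 1$, the minimal polynomial divides $t^2 - 1$, and diagonalizability then gives $H^2 = I$. Now apply Lemma \ref{lem:Hdegs}: for each vertex $u$ we have $d(u) = (H^2)_{uu} = (I)_{uu} = 1$. Thus $\Gamma(X)$ is $1$-regular, and a $1$-regular graph is precisely a disjoint union of edges, so $\Gamma(X) \cong m K_2$ with $2m = |V(X)|$.

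I expect the proof to be this short; there is no substantial obstacle. The only point requiring care is the justification that the hypothesis $\la \in \{-1,1\}$ yields $H^2 = I$ rather than merely $\la^2 = 1$ eigenvalue-by-eigenvalue, and this is handled cleanly by invoking diagonalizability from Proposition \ref{prop:real}. Once $H^2 = I$ is in hand, Lemma \ref{lem:Hdegs} does all the remaining work by translating the diagonal entries of $H^2$ into the degree sequence.
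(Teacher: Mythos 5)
Your proof is correct, but your forward direction takes a genuinely different route from the paper's. The paper argues globally via traces: from $\tr(H) = 0$ it deduces that $1$ and $-1$ have equal multiplicities (so $n = 2m$), from $\tr(H^2) = n$ together with Lemma \ref{lem:Hdegs} it deduces that $\Gamma(X)$ has exactly $m$ edges, and then it must separately rule out isolated vertices (an isolated vertex would contribute the eigenvalue $0$) before concluding by a counting argument that every degree equals $1$. You instead upgrade the eigenvalue hypothesis to the matrix identity $H^2 = I$ using diagonalizability (Proposition \ref{prop:real}), and then read off $d(u) = (H^2)_{uu} = 1$ for \emph{each} vertex directly from Lemma \ref{lem:Hdegs}. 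Your version is more local and slightly cleaner: the identity $H^2 = I$ pins down every diagonal entry at once, so the parity argument, the edge count, and the isolated-vertex exclusion all become unnecessary. Both proofs ultimately rest on Lemma \ref{lem:Hdegs}; yours simply extracts more from it by using the full matrix equation rather than only its trace. You also spell out the backward direction via the $2 \times 2$ block decomposition, which the paper leaves implicit; that is a minor completeness point in your favor.
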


\begin{proof}
Let $X$ be a digraph on $n$ vertices having the property as in the statement of the theorem. Let $\lambda_1 \geq \cdots \geq \lambda_n$ be the eigenvalues of $H(X)$. Then $\lambda_i \in \{-1,1\}$ for $i = 1,\ldots, n$ by assumption. Since the $H$-eigenvalues of $X$ must sum to $\tr(H) = 0$, the multiplicity of $1$ and $-1$ are equal and $X$ has an even number vertices, say $n = 2m$. Observe that
\[\tr(H(X)^2) = \sum_{i=1}^n \lambda_i^2 = n = 2m.
\]
Lemma \ref{lem:Hdegs} gives that $\Gamma(X)$ has $m$ edges. If $X$ has an isolated vertex, then $X$ will have $0$ as an $H$-eigenvalue. Thus, every vertex must have degree at least $1$ in $\Gamma(X)$. Then $d_{\Gamma(X)}(v) = 1$ for every vertex $v$ and so $\Gamma(X) \cong m K_2$.
\end{proof}

\begin{theorem}
\label{thm:hevalinpm1}
For a digraph $X$ the following are equivalent:
\begin{enumerate}[\rm (a)]
\item $\sigma_H(X) \subseteq (-\sqrt{2}, \sqrt{2}\,)$;
\item $\sigma_H(X) \subseteq [-1, 1]$; and
\item every component of $X$ is either a single arc, a digon or an isolated vertex.
\end{enumerate}
\end{theorem}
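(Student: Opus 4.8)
The plan is to establish the cyclic chain of implications $(c) \Rightarrow (b) \Rightarrow (a) \Rightarrow (c)$. The implication $(b) \Rightarrow (a)$ is immediate, since $[-1,1] \subseteq (-\sqrt 2, \sqrt 2)$. For $(c) \Rightarrow (b)$, I would simply record the $H$-spectrum of each allowed component: an isolated vertex contributes the eigenvalue $0$; a single arc $T_2$ has Hermitian adjacency matrix $\pmat{0 & i \\ -i & 0}$ with eigenvalues $\pm 1$; and a digon has matrix $\pmat{0 & 1 \\ 1 & 0}$, again with eigenvalues $\pm 1$. Since $H$ of a disjoint union is block-diagonal, the $H$-spectrum of $X$ is the union of the spectra of its components, so every eigenvalue lies in $\{-1,0,1\} \subseteq [-1,1]$.

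The substance of the theorem is $(a) \Rightarrow (c)$, and here the engine is interlacing (Corollary \ref{cor:interlacing digraph}): if $\sigma_H(X) \subseteq (-\sqrt 2, \sqrt 2)$, then every induced subdigraph of $X$ also has its entire $H$-spectrum strictly inside $(-\sqrt 2, \sqrt 2)$. I would argue contrapositively on the underlying graph. If $\Gamma(X)$ is not a disjoint union of edges and isolated vertices, then it has a vertex $v$ of degree at least $2$; choosing two neighbours $u, w$ of $v$, the induced subdigraph on $\{u,v,w\}$ has underlying graph either $P_3$ (when $uw \notin E(\Gamma(X))$) or $C_3$ (when $uw \in E(\Gamma(X))$). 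The key step is to show that in either case this three-vertex subdigraph forces an $H$-eigenvalue of absolute value at least $\sqrt 2$, which by interlacing contradicts (a).

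For the $P_3$ case, a direct determinant computation shows that any digraph with underlying graph $P_3$ has $H$-characteristic polynomial $t^3 - 2t$: the two nonzero off-diagonal entries each have modulus $1$, so the orientation (arc or digon) is irrelevant, and the eigenvalues are $0, \pm\sqrt 2$. Interlacing then gives $\lambda_1(X) \ge \sqrt 2$, a contradiction. For the $C_3$ case I would avoid enumerating orientations and instead invoke the trace identity $\sum_j \mu_j^2 = 2e$ from Proposition \ref{prop:traceh3}(ii): the underlying triangle has $e = 3$ edges, so the eigenvalues $\mu_1 \ge \mu_2 \ge \mu_3$ of the subdigraph satisfy $\mu_1^2 + \mu_2^2 + \mu_3^2 = 6$. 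Were all three of absolute value less than $\sqrt 2$, this sum would be strictly below $6$, a contradiction; hence $|\mu_1| \ge \sqrt 2$ or $|\mu_3| \ge \sqrt 2$, and interlacing again produces an $H$-eigenvalue of $X$ outside $(-\sqrt 2, \sqrt 2)$. Therefore $\Gamma(X)$ has maximum degree at most $1$, which is precisely condition (c), closing the cycle.

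I expect the only delicate point to be isolating this purely local obstruction: verifying the clean dichotomy of the three-vertex induced subdigraph into the $P_3$ and $C_3$ cases and checking that each independently pins an eigenvalue at $\pm\sqrt 2$. Once that is done, interlacing handles everything globally and no spectral analysis of $X$ as a whole is required. A minor point worth stating carefully is that the open interval in (a) is what makes the boundary value $\sqrt 2$ fatal in the $P_3$ case, whereas the $C_3$ case produces the contradiction through the strict trace inequality.
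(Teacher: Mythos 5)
Your proof is correct, and its global skeleton --- interlacing (Corollary \ref{cor:interlacing digraph}) applied to three-vertex induced subdigraphs in order to force $\Delta(\Gamma(X))\le 1$ --- is the same as the paper's. Where you genuinely depart is in how the three-vertex obstruction is established. The paper consults Table \ref{tab:h3s}, the computer-generated list of all $16$ digraphs on $3$ vertices grouped into $H$-cospectral classes, and reads off that the only admissible induced subdigraphs are $E_3$, $Z_5$ and $Z_6$. You instead split according to the underlying graph of the induced subdigraph, $P_3$ versus $C_3$, and dispose of each case by hand: the $P_3$ case by the orientation-independent determinant computation giving characteristic polynomial $t^3-2t$ (eigenvalues $0,\pm\sqrt{2}$, since each nonzero entry of $H$ has modulus $1$), and the $C_3$ case by the trace identity $\mu_1^2+\mu_2^2+\mu_3^2=2e=6$ of Proposition \ref{prop:traceh3}(ii), which is incompatible with all $|\mu_j|<\sqrt{2}$. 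This buys self-containedness: your argument needs no enumeration and no computer verification (the Sage computations reported in Section \ref{sec:hfam}), and it isolates exactly why the threshold is $\sqrt{2}$ --- it is the spectral radius of every orientation of $P_3$. Note that the trace identity alone would not suffice in the $P_3$ case (there $2e=4<6$), so your determinant computation is genuinely needed; conversely, the determinant route would require checking the several non-cospectral orientations of $C_3$, so the two cases really do call for the two different tools you chose. Your remaining implications --- (c) $\Rightarrow$ (b) by block-diagonality of $H$ over components, and (b) $\Rightarrow$ (a) by inclusion of intervals --- match the paper's ``trivial'' directions, and your closing remark that the openness of the interval in (a) is what makes the boundary eigenvalue $\sqrt{2}$ fatal is exactly the right point of care.
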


\begin{proof}
Let $X$ be a digraph on $n$ vertices with $H$-eigenvalues $\lambda_1 \geq \cdots \geq \lambda_n$. Suppose that $\lambda_1 < \sqrt{2}$ and $\lambda_n > -\sqrt{2}$. Let $Y$ be an induced subdigraph of $X$ on $3$ vertices and let $\mu_1 \geq \mu_2 \geq \mu_3$ be the $H$-eigenvalues of $Y$. Applying the interlacing theorem, we obtain that
\[
-\sqrt{2} < \mu_i <\sqrt{2}
\]
for $i=1,2,3$. There are exactly $16$ digraphs on $3$ vertices and so we may compute their $H$-eigenvalues and determine which digraphs on $3$ vertices have all eigenvalues strictly between $-\sqrt{2}$ and $\sqrt{2}$. The digraphs on $3$ vertices grouped by $H$-cospectral classes are given in Table \ref{tab:h3s}. Following the naming of the digraphs in Table \ref{tab:h3s}, we see that $Y$ is isomorphic to one of $E_3$, $Z_5$ and $Z_6$. In other words, $Y$ is either the empty graph on $3$ vertices or a graph consisting of an isolated vertex and either one arc or one digon.

Since the choice of $Y$ was arbitrary, the above holds for every induced subdigraph of $X$ on $3$ vertices. Then, $\Gamma(X)$ does not have vertices of degree $2$ or more. Thus $\Delta(\Gamma(X)) \leq 1$ and so $\Gamma(X)$ consists of the union of disjoint copies of $K_2$ and isolated vertices. This shows that (a) implies (b).

The implications (b) $\Rightarrow$ (c) and (c) $\Rightarrow$ (a) are trivial, so this completes the proof.
\end{proof}

\begin{table}[ptbh]
\begin{center}
\begin{tabular}{|l|l|c|}
\hline
Polynomial $p(t)$ & Roots of $p(t)$ & All digraphs $X$ such that $\phi(H(X),t) = p(t)$\\
\hline
$t^3 - 2t$ & $\sqrt{2}, 0, -\sqrt{2} $ &
\begin{tikzpicture}[scale=0.65]
	\begin{pgfonlayer}{nodelayer}
		\node [style=new] (0) at (0, 1) {};
		\node [style=new] (1) at (0, -0) {};
		\node [style=new] (2) at (0, -1) {};
		\node [style=new] (3) at (1.5, 1) {};
		\node [style=new] (4) at (1.5, -0) {};
		\node [style=new] (5) at (1.5, -1) {};
		\node [style=new] (6) at (7.5, 1) {};
		\node [style=new] (7) at (7.5, -0) {};
		\node [style=new] (8) at (7.5, -1) {};
		\node [style=new] (9) at (4.5, -0) {};
		\node [style=new] (10) at (4.5, -1) {};
		\node [style=new] (11) at (4.5, 1) {};
		\node [style=new] (12) at (3, -0) {};
		\node [style=new] (13) at (3, 1) {};
		\node [style=new] (14) at (3, -1) {};
		\node [style=new] (15) at (6, -0) {};
		\node [style=new] (16) at (6, 1) {};
		\node [style=new] (17) at (6, -1) {};
		\node [style=n] (18) at (0, -1.5) {\scriptsize  $Z_1$};
		\node [style=n] (19) at (1.5, -1.5) {\scriptsize $\vec{P}_3$};
		\node [style=n] (20) at (3, -1.5) {\scriptsize $Z_2$};
		\node [style=n] (21) at (4.5, -1.5) {\scriptsize $Z_3$};
		\node [style=n] (22) at (6, -1.5) {\scriptsize $Z_4$};
		\node [style=n] (23) at (7.5, -1.5) {\scriptsize  $P_3$};
		\node [style=n] (24) at (3, 1.5) {};
	\end{pgfonlayer}
	\begin{pgfonlayer}{edgelayer}
		\draw [directed, bend right, looseness=1.00] (6) to (7);
		\draw [directed, bend right, looseness=1.00] (7) to (8);
		\draw [directed, bend right, looseness=1.00] (8) to (7);
		\draw [directed, bend right, looseness=1.00] (7) to (6);
		\draw [directed] (1) to (0);
		\draw [directed] (1) to (2);
		\draw [directed] (4) to (3);
		\draw [directed] (5) to (4);
		\draw [directed] (9) to (11);
		\draw [directed, bend right, looseness=1.00] (10) to (9);
		\draw [directed, bend right, looseness=1.00] (9) to (10);
		\draw [directed] (13) to (12);
		\draw [directed] (14) to (12);
		\draw [directed, bend right, looseness=1.00] (17) to (15);
		\draw [directed, bend right, looseness=1.00] (15) to (17);
		\draw [directed] (16) to (15);
	\end{pgfonlayer}
\end{tikzpicture}  \\
\hline
$t^3 - 3t + 2$ & $1,1,-2$ &
\begin{tikzpicture}[scale=0.65]
	\begin{pgfonlayer}{nodelayer}
		\node [style=new] (0) at (0, 1) {};
		\node [style=new] (1) at (-1, -0.5) {};
		\node [style=new] (2) at (1, -0.5) {};
		\node [style=n] (3) at (0, -1.25) { \scriptsize $K_3'$};
		\node [style=n] (24) at (0, 1.5) {};
	\end{pgfonlayer}
	\begin{pgfonlayer}{edgelayer}
		\draw [directed] (1) to (0);
		\draw [directed] (0) to (2);
		\draw [directed, bend left=15, looseness=1.00] (2) to (1);
		\draw [directed, bend left=15, looseness=1.00] (1) to (2);
	\end{pgfonlayer}
\end{tikzpicture} \\
\hline
$t^3 $ & $0,0,0$ &
\begin{tikzpicture}[scale=0.65]
	\begin{pgfonlayer}{nodelayer}
		\node [style=new] (0) at (0, 1) {};
		\node [style=new] (1) at (-1, 1) {};
		\node [style=new] (2) at (1, 1) {};
		\node [style=n] (3) at (0, 0.5) {\scriptsize $E_3$};
		\node [style=n] (24) at (0, 1.5) {};
	\end{pgfonlayer}
\end{tikzpicture}\\
\hline
$t^3 - 3t - 2$ & $2,-1,-1$ &
\begin{tikzpicture}[scale=0.65]
	\begin{pgfonlayer}{nodelayer}
		\node [style=new] (0) at (0, 1) {};
		\node [style=new] (1) at (-1, -0.5) {};
		\node [style=new] (2) at (1, -0.5) {};
		\node [style=n] (3) at (0, -1.25) { \scriptsize $K_3$};
		\node [style=n] (4) at (0, 1.5) {};
		\node [style=n] (5) at (3.5, -1.25) { \scriptsize $Y_{2,1}$};
		\node [style=new] (6) at (4.5, -0.5) {};
		\node [style=new] (7) at (2.5, -0.5) {};
		\node [style=new] (8) at (3.5, 1) {};
		\node [style=n] (9) at (7, -1.25) {\scriptsize  $Y_{1,2}$};
		\node [style=new] (10) at (8, -0.5) {};
		\node [style=new] (11) at (6, -0.5) {};
		\node [style=new] (12) at (7, 1) {};
	\end{pgfonlayer}
	\begin{pgfonlayer}{edgelayer}
		\draw [directed, bend left=15, looseness=1.00] (1) to (0);
		\draw [directed, bend left=15, looseness=1.00] (0) to (2);
		\draw [directed, bend left=15, looseness=1.00] (2) to (1);
		\draw [directed, bend left=15, looseness=1.00] (1) to (2);
		\draw [directed, bend left=15, looseness=1.00] (2) to (0);
		\draw [directed, bend left=15, looseness=1.00] (0) to (1);
		\draw [directed, bend left=15, looseness=1.00] (6) to (7);
		\draw [directed, bend left=15, looseness=1.00] (7) to (6);
		\draw [directed, bend left=15, looseness=1.00] (10) to (11);
		\draw [directed, bend left=15, looseness=1.00] (11) to (10);
		\draw [directed] (7) to (8);
		\draw [directed] (6) to (8);
		\draw [directed] (12) to (11);
		\draw [directed] (12) to (10);
	\end{pgfonlayer}
\end{tikzpicture} \\
\hline
$t^3 - t$ & $1, 0, -1$ &
\begin{tikzpicture}[scale=0.65]
	\begin{pgfonlayer}{nodelayer}
		\node [style=new] (0) at (0, 0.35) {};
		\node [style=new] (1) at (-1, -0.5) {};
		\node [style=new] (2) at (1, -0.5) {};
		\node [style=n] (3) at (0, -1.25) { \scriptsize $Z_5$};
		\node [style=n] (4) at (0, 0.65) {};
		\node [style=n] (5) at (3.5, -1.25) { \scriptsize $Z_6$};
		\node [style=new] (6) at (4.5, -0.5) {};
		\node [style=new] (7) at (2.5, -0.5) {};
		\node [style=new] (8) at (3.5, 0.35) {};
	\end{pgfonlayer}
	\begin{pgfonlayer}{edgelayer}
		\draw [directed, bend left=15, looseness=1.00] (2) to (1);
		\draw [directed] (6) to (7);
		\draw [directed, bend left=15, looseness=1.00] (1) to (2);
	\end{pgfonlayer}
\end{tikzpicture} \\
\hline
$t^3 - 3t$ & $\sqrt{3}, 0, -\sqrt{3}$ &
\begin{tikzpicture}[scale=0.65]
	\begin{pgfonlayer}{nodelayer}
		\node [style=new] (0) at (0, 1) {};
		\node [style=new] (1) at (-1, -0.5) {};
		\node [style=new] (2) at (1, -0.5) {};
		\node [style=n] (3) at (0, -1.25) { \scriptsize $Z_7$};
		\node [style=n] (4) at (0, 1.5) {};
		\node [style=n] (5) at (3.5, -1.25) {\scriptsize  $D_3$};
		\node [style=new] (6) at (4.5, -0.5) {};
		\node [style=new] (7) at (2.5, -0.5) {};
		\node [style=new] (8) at (3.5, 1) {};
		\node [style=new] (9) at (8, -0.5) {};
		\node [style=new] (10) at (6, -0.5) {};
		\node [style=n] (11) at (7, -1.25) { \scriptsize $\widetilde{C}_3$};
		\node [style=new] (12) at (7, 1) {};
	\end{pgfonlayer}
	\begin{pgfonlayer}{edgelayer}
		\draw [directed, bend left=15, looseness=1.00] (2) to (1);
		\draw [directed] (6) to (7);
		\draw [directed, bend left=15, looseness=1.00] (1) to (2);
		\draw [directed] (0) to (1);
		\draw [directed, bend left=15, looseness=1.00] (0) to (2);
		\draw [directed, bend left=15, looseness=1.00] (2) to (0);
		\draw [directed] (7) to (8);
		\draw [directed] (8) to (6);
		\draw [directed] (10) to (12);
		\draw [directed] (12) to (9);
		\draw [directed] (10) to (9);
	\end{pgfonlayer}
\end{tikzpicture} \\
\hline
\end{tabular}
\end{center}
\caption{$H$-cospectral classes of all digraphs on $3$ vertices. \label{tab:h3s} }
\end{table}

Note that Theorem \ref{thm:hevalinpm1} implies that $X$ has its $H$-spectrum equal to \[\{1^{(m)}, 0^{(k)}, -1^{(m)} \},\] where $k$ is the number of isolated vertices and $m$ is the number of components consisting of a single arc or a digon.

With some more work, we can characterize all digraphs with $H$-spectrum in $(-\sqrt{3}, \sqrt{3}\,)$. In order to do this, we need following corollaries of Proposition \ref{prop:c_n}, where we compute the spectra of all digraphs whose underlying graph is isomorphic to $C_4$ or $C_5$.
Using Sage, we found the characteristic polynomials and the $H$-eigenvalues of these digraphs. For $C_4$, they are collected in Table \ref{tab:c4s}. We can summarize this in the following corollary to Proposition \ref{prop:c_n}.

\begin{table}[ht]
\begin{center}
\begin{tabular}{|l|l|c|}
\hline
Digraph & Characteristic polynomial & Eigenvalues \\
\hline
$C_4$ & $t^4 - 4t^2$ & $\pm 2, 0^{(2)}$ \\
\hline
$\widetilde{C}_4$ & $t^4 - 4t^2 + 4$ & $\pm \sqrt{2}^{(2)}$ \\
\hline
$\widetilde{C}_4'$ & $t^4 - 4t^2 + 2$ & $\pm \sqrt{2 \pm \sqrt{2}}$ \\
\hline
\end{tabular}
\end{center}
\caption{$H$-eigenvalues of digraphs with $C_4$ as the underlying graph. \label{tab:c4s} }
\end{table}

\begin{corollary}
If the underlying graph of a digraph is isomorphic to the $4$-cycle $C_4$, then its $H$-spectrum is one of the following:
$\bigl\{\pm 2, 0^{(2)}\bigr\}$,
$\bigl\{\pm \sqrt{2}^{(2)} \bigr\}$, or
$\bigl\{\pm \sqrt{2 \pm \sqrt{2}}\,\bigr\}$.
\end{corollary}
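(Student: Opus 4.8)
The plan is to combine the classification in Proposition \ref{prop:c_n} with a short computation of characteristic polynomials for the three resulting representatives. Since $n=4$ is even, Proposition \ref{prop:c_n} tells us that every digraph whose underlying graph is $C_4$ is $H$-cospectral to exactly one of $C_4$, $\widetilde{C}_4$, or $\widetilde{C}_4'$ (the fourth family $\widetilde{C}_n''$ arises only for odd $n$). Since $H$-cospectral digraphs share a characteristic polynomial, it suffices to compute $\phi(X,t)$ for these three digraphs and read off the roots.

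Rather than expand three $4\times 4$ determinants, I would compute all coefficients uniformly via Theorem \ref{thm:hcp}. For each of the three digraphs the underlying graph is $C_4$, so the basic subgraphs are the same set-theoretically: there are no basic subgraphs of odd order, the order-$2$ subgraphs are the four single edges (giving $c_2=-4$, consistent with Corollary \ref{cor:cn-2} since $e=4$), and the order-$4$ subgraphs are the two perfect matchings of $C_4$ together with, possibly, the $4$-cycle itself. Each perfect matching contributes $(-1)^{0+2}2^0=+1$, so the two matchings always contribute $+2$ to the constant term $c_0$. The only place the three digraphs differ is in whether the full $4$-cycle qualifies as a basic subgraph and what sign it carries.

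The one genuinely non-mechanical point is the cycle-parity bookkeeping built into the definition of a basic subgraph. The $4$-cycle counts only when its digraph in $X$ has an even number of non-digon arcs: this holds for $C_4$ (zero arcs) and $\widetilde{C}_4$ (four arcs) but fails for $\widetilde{C}_4'$ (three arcs and one digon), so the cycle term is absent precisely for $\widetilde{C}_4'$. When it is present one evaluates $(-1)^{r(B)+s(B)}2^{c(B)}$ with $c=1$ and $s=1$: for $C_4$ one has $r=0$, giving $-2$, whereas for $\widetilde{C}_4$ the standard orientation gives $f=3$, $b=1$, hence $r=1$ and a contribution of $+2$. Adding the matching contribution $+2$ in each case yields constant terms $c_0=0,\,4,\,2$ for $C_4$, $\widetilde{C}_4$, $\widetilde{C}_4'$ respectively.

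Assembling the coefficients gives $\phi(C_4,t)=t^4-4t^2$, $\phi(\widetilde{C}_4,t)=t^4-4t^2+4=(t^2-2)^2$, and $\phi(\widetilde{C}_4',t)=t^4-4t^2+2$. Each is biquadratic, so I would finish by solving for $t^2$: the first gives $t^2\in\{0,4\}$ and spectrum $\{\pm 2,0^{(2)}\}$; the second gives the double root $t^2=2$ and spectrum $\{\pm\sqrt{2}^{(2)}\}$; and the third gives $t^2=2\pm\sqrt{2}$ and spectrum $\{\pm\sqrt{2\pm\sqrt{2}}\}$. These are exactly the three possibilities in the statement. Alternatively, the polynomial for $\widetilde{C}_4$ is already recorded in Section \ref{sec:h-diam}, and one could instead expand the three Hermitian matrices directly; Theorem \ref{thm:hcp} merely organizes the computation and makes transparent why the constant terms differ.
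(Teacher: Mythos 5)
Your proof is correct, and its skeleton is exactly the paper's: invoke Proposition \ref{prop:c_n} to reduce to the three representatives $C_4$, $\widetilde{C}_4$, $\widetilde{C}_4'$ (the family $\widetilde{C}_n''$ indeed only arises for odd $n$), then compute the three characteristic polynomials and read off the roots. The only difference is in how that computation is carried out: the paper simply reports the polynomials of Table \ref{tab:c4s} as found by Sage, whereas you derive them by hand from the Sachs-type coefficient formula of Theorem \ref{thm:hcp}. Your bookkeeping there is right: the four single edges give $c_2=-4$, the two perfect matchings contribute $+2$ to $c_0$, odd-order basic subgraphs do not exist, and the $4$-cycle itself enters only when the number of non-digon arcs is even, with sign $(-1)^{r+s}2$ equal to $-2$ for the all-digon cycle ($r=0$, $s=1$), $+2$ for $\widetilde{C}_4$ ($f=3$, $b=1$, so $r=1$), and absent for $\widetilde{C}_4'$, yielding $c_0=0,4,2$ respectively. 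What your route buys is a self-contained, computer-free verification that also explains structurally why the three spectra differ (only the constant term changes, and only through the cycle term); what the paper's route buys is brevity and uniformity with its treatment of $C_5$, where the same Sage computation produces Table \ref{tab:c5s}.
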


We repeat the same process for digraphs whose underlying graph is $C_5$.
Using Sage, we found the $H$-eigenvalues of these graphs. They are given in Table \ref{tab:c5s}. We can summarize this in the following corollary to Proposition \ref{prop:c_n}.

\begin{table}[ht]
\begin{center}
\begin{tabular}{|l|l|c|}
\hline
Digraph & Characteristic polynomial & Eigenvalues \\
\hline
$C_5$ & $t^5 - 5t^3 + 5t -2$ & $2, \left(\frac{-1 \pm \sqrt{5}}{2} \right)^{(2)}$ \\
\hline
$D_5$ & $t^5 - 5t^3 + 5t$ & $0, \pm \sqrt{\frac{5 \pm \sqrt{5}}{2}}$ \\
\hline
$\widetilde{C}_5''$ & $t^5 - 5t^3 + 5t +2$ & $2, \left(\frac{1 \pm \sqrt{5}}{2} \right)^{(2)}$ \\
\hline
\end{tabular}
\end{center}
\caption{$H$-eigenvalues of digraphs with $C_5$ as the underlying graph.} \label{tab:c5s}
\end{table}

\begin{corollary}
If $Y$ is a digraph with $\Gamma(Y) \cong C_5$, then the $H$-spectrum of $Y$ is one of the following:
$\left\{2, \left(\frac{-1 \pm \sqrt{5}}{2} \right)^{(2)}\right\}$,
$\left\{0, \pm \sqrt{\frac{5 \pm \sqrt{5}}{2}} \right\}$, or
$\left\{2, \left(\frac{1 \pm \sqrt{5}}{2} \right)^{(2)}\right\}$.
\end{corollary}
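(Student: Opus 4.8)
The plan is to combine the structural reduction already proved in Proposition \ref{prop:c_n} with the coefficient formula of Theorem \ref{thm:hcp}. Since $5$ is odd, Proposition \ref{prop:c_n} tells us that every digraph $Y$ with $\Gamma(Y)\cong C_5$ is $H$-cospectral to one of $C_5$, $\widetilde{C}_5$, or $\widetilde{C}_5''$. The oriented representative $\widetilde{C}_5$ is an orientation of the odd cycle, so by Lemma \ref{lem:oddcycle} it is related to $D_5$ by a sequence of local reversals; since a local reversal across a digon-free cut preserves the $H$-spectrum (Proposition \ref{prop:localreversal}), and an orientation has no digons, we get $\sigma_H(\widetilde{C}_5)=\sigma_H(D_5)$. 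Thus it suffices to compute the spectra of the three representatives $C_5$, $D_5$, and $\widetilde{C}_5''$, which are exactly the three rows of Table \ref{tab:c5s}.

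Next I would determine the characteristic polynomial of an \emph{arbitrary} digraph $Y$ with $\Gamma(Y)\cong C_5$ using Theorem \ref{thm:hcp}. The basic subgraphs of $C_5$ are easy to enumerate: there is none of order $1$ or $3$ (a single edge needs two vertices and $C_5$ has no triangle, while three vertices cannot be covered by disjoint edges), those of order $2$ are the $5$ single edges, those of order $4$ are the $5$ pairs of disjoint edges, and the only candidate of order $5$ is the whole $5$-cycle. Each single edge contributes $-1$ and each disjoint pair contributes $+1$, so independently of the orientation
\[
\phi(Y,t)=t^5-5t^3+5t+c_0,
\]
where $c_0$ is the contribution of the full $5$-cycle as a basic subgraph. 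This already explains why only three spectra can occur: the three cospectral classes differ in the single coefficient $c_0$.

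It then remains to evaluate $c_0\in\{-2,0,2\}$ in each case. The whole cycle counts as a basic subgraph only when its number of non-digon arcs is even. For $D_5$ all five edges are consistently directed arcs (an odd number), so the cycle does not contribute and $c_0=0$; then $\phi(D_5,t)=t(t^4-5t^2+5)$, whose roots are $0$ and $\pm\sqrt{(5\pm\sqrt5)/2}$. For $C_5$ every edge is a digon (zero non-digon arcs, even) and for $\widetilde{C}_5''$ there are four non-digon arcs (even), so in both cases the cycle contributes $\pm2$, the sign being $(-1)^{r(B)}$ where $r(B)=\tfrac12|f-b|$; equivalently, $c_0=(-1)^5\det H=-\det H$. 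This evaluates to $c_0=-2$ for $C_5$ and $c_0=+2$ for $\widetilde{C}_5''$, giving the factorizations $\phi(C_5,t)=(t-2)(t^2+t-1)^2$ and $\phi(\widetilde{C}_5'',t)=(t+2)(t^2-t-1)^2$, whose roots are $\{2,(\tfrac{-1\pm\sqrt5}{2})^{(2)}\}$ and $\{-2,(\tfrac{1\pm\sqrt5}{2})^{(2)}\}$ respectively.

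The hard part will be pinning down the two nonzero constant terms: one must carefully compute the parity of $r(B)$ for the single $5$-cycle in $C_5$ and in $\widetilde{C}_5''$ (or, equivalently, evaluate the two $5\times5$ determinants), since this is precisely what separates the $+2$ case from the $-2$ case. Everything else—the reduction to the three representatives via Proposition \ref{prop:c_n} and Lemma \ref{lem:oddcycle}, and the orientation-independent edge and matching counts producing the $t^5-5t^3+5t$ part—is routine.
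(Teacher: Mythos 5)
Your overall route is sound and, in its computational half, genuinely different from the paper's. The paper does exactly your first step: Proposition \ref{prop:c_n} (together with Lemma \ref{lem:oddcycle}, which identifies $\widetilde{C}_5$ with $D_5$ up to local reversals) reduces everything to the three representatives $C_5$, $D_5$, $\widetilde{C}_5''$; but it then simply computes the three spectra by machine (Sage) and records them in Table \ref{tab:c5s}. You replace the machine computation with a Sachs-type coefficient count: the matching contributions give $t^5-5t^3+5t$ for every orientation, so the three cospectral classes are separated by the constant term $c_0$ alone, which you pin down via $c_0=-\det H$. This is more self-contained than the paper's proof, and it explains, rather than merely records, why exactly three spectra can occur.

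Two caveats. First, your conclusion for the third class is $\bigl\{-2,\bigl(\tfrac{1\pm\sqrt5}{2}\bigr)^{(2)}\bigr\}$, which contradicts the corollary as printed --- and you are right: the multiset $\bigl\{2,\bigl(\tfrac{1\pm\sqrt5}{2}\bigr)^{(2)}\bigr\}$ has sum $4\neq 0=\tr H$, so it cannot be an $H$-spectrum, and $t=2$ is not a root of the polynomial $t^5-5t^3+5t+2$ listed for $\widetilde{C}_5''$ in Table \ref{tab:c5s}, while $t=-2$ is. The eigenvalue column of that table, and hence the corollary, carries a sign typo which your argument exposes. Second, your parenthetical claim that the cycle's sign is ``$(-1)^{r(B)}$, equivalently $c_0=-\det H$'' is not an equivalence: Theorem \ref{thm:hcp} as printed uses the sign $(-1)^{r(B)+s(B)}$ with $s(B)$ counting only \emph{even} components, so an odd cycle comes out with the wrong sign (test it on $K_3$: the printed formula yields $t^3-3t+2$, whereas Table \ref{tab:h3s} correctly lists $K_3$ under $t^3-3t-2$); the formula is missing a factor of $-1$ per component, as in the classical Sachs theorem. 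Your determinant evaluation is the correct one, and since your final values $c_0(C_5)=-2$, $c_0(D_5)=0$, $c_0(\widetilde{C}_5'')=+2$ come from it, the proof stands; had you relied literally on the coefficient formula as printed, the spectra of $C_5$ and $\widetilde{C}_5''$ would have come out interchanged.
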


We can now return to graphs with small spectral radius.

\begin{theorem}
\label{thm:hevalinsqrt3}
A digraph $X$ has $\sigma_H(X) \subseteq (-\sqrt{3}, \sqrt{3}\,)$ if and only if every component\/ $Y$ of $X$ has $\Gamma(Y)$ isomorphic to a path of length at most $3$ or to $C_4$, where in the latter case $Y$ is isomorphic to $\widetilde{C}_4$, or to one of the two strongly connected digraphs with two digons.
\end{theorem}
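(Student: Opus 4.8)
The plan is to prove both implications. The ``if'' direction is a direct verification using results already at hand, while the ``only if'' direction combines a degree bound, interlacing (Corollary \ref{cor:interlacing digraph}), and the explicit small-cycle data in Tables \ref{tab:h3s}, \ref{tab:c4s} and \ref{tab:c5s}, finishing with an enumeration for $C_4$.

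For the ``if'' direction, a path component is a tree, so by Corollary \ref{cor:hfam-forests} its $H$-spectrum equals the $A$-spectrum of the underlying path $P_m$, namely $\{2\cos(k\pi/(m+1)):1\le k\le m\}$; its largest value $2\cos(\pi/(m+1))$ is strictly below $2\cos(\pi/6)=\sqrt3$ exactly when $m\le 4$, i.e.\ for paths of length at most $3$. For the three cycle components listed, Proposition \ref{prop:c_n} and Table \ref{tab:c4s} give $H$-spectrum $\{\pm\sqrt2^{(2)}\}\subseteq(-\sqrt3,\sqrt3\,)$. Since the $H$-spectrum of $X$ is the union of those of its components, this settles the converse.

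For the ``only if'' direction I would first bound the maximum degree. As $H=H(X)$ is Hermitian, $H^2$ is positive semidefinite, so each diagonal entry satisfies $(H^2)_{vv}\le\lambda_{\max}(H^2)=\rho(X)^2<3$; by Lemma \ref{lem:Hdegs} this is $d_{\Gamma(X)}(v)$, forcing every degree to be at most $2$. Hence each component of $\Gamma(X)$ is a path or a cycle. A path on at least $5$ vertices contains an induced $P_5$, which (being a forest, so $H$-cospectral with $A(P_5)$) has largest $H$-eigenvalue $\sqrt3$; by interlacing this would give $\lambda_1(X)\ge\sqrt3$, so path components have at most $4$ vertices. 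A cycle on $n\ge6$ vertices likewise contains an induced $P_5$, ruling out $n\ge6$, while the cases $n=3$ and $n=5$ are eliminated using Proposition \ref{prop:c_n} together with Tables \ref{tab:h3s} and \ref{tab:c5s}, where every digraph with those underlying cycles has spectral radius at least $\sqrt3$.

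The remaining case, cycle components with underlying graph $C_4$, is the crux and the expected main obstacle, since here we must pin down exactly which digraphs qualify. Writing the four cyclically consecutive off-diagonal entries as $a,b,c,d\in\{1,\pm i\}$, Corollary \ref{cor:cn-2} gives the coefficient of $t^2$ as $-4$, and Theorem \ref{thm:hcp} shows the odd coefficients vanish, so $\phi(X,t)=t^4-4t^2+c_0$ with $c_0=\det H=2-2\rea(abcd)$. By Proposition \ref{prop:c_n} and Table \ref{tab:c4s}, only the cospectral class of $\widetilde C_4$ (where $c_0=4$ and the spectrum is $\{\pm\sqrt2^{(2)}\}$) lies in the interval, the other two classes having $\rho\ge\sqrt{2+\sqrt2}>\sqrt3$. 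Thus the admissible $C_4$-digraphs are precisely those with $abcd=-1$; enumerating the solutions $a,b,c,d\in\{1,\pm i\}$ up to the symmetries of $C_4$ yields exactly three isomorphism types: the orientation $\widetilde C_4$ (no digons, not strongly connected), and two digraphs with two digons and two equally directed arcs---one with the digons adjacent and one with them opposite---each readily checked to be strongly connected. Combining the path and cycle analyses over all components gives the stated characterization.
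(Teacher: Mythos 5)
Your proof is correct, and while it follows the same overall skeleton as the paper's (reduce to maximum degree $2$, then classify path and cycle components using the small-case tables), two key steps are handled genuinely differently. For the degree bound, the paper first rules out triangles by interlacing against the $3$-vertex classes in Table \ref{tab:h3s}, and then argues that a vertex of degree at least $3$ would force an induced $4$-star, whose $H$-spectrum always contains $\sqrt{3}$, again contradicting interlacing. Your argument --- $d_{\Gamma(X)}(v) = (H^2)_{vv} \le \lambda_{\max}(H^2) = \rho(X)^2 < 3$, combining Lemma \ref{lem:Hdegs} with the fact that a diagonal entry of a Hermitian matrix is at most its largest eigenvalue --- is more direct, avoids the star computation entirely, and defers the triangle question to where it is actually needed (excluding $C_3$ components, which you do via Table \ref{tab:h3s}). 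Second, where the paper dismisses the crucial $C_4$ case with ``it is easy to see that there are three such digraphs,'' you supply the missing argument: the determinant formula $\det H = 2 - 2\rea(abcd)$ identifies the cospectral class of $\widetilde{C}_4$ with the condition $abcd = -1$, and enumerating the solutions in $\{1,\pm i\}^4$ up to the symmetries of the cycle yields exactly the three claimed isomorphism types ($\widetilde{C}_4$ plus the two strongly connected digraphs with two digons). Your converse direction is likewise more explicit than the paper's one-sentence claim, checking the path eigenvalues $2\cos\bigl(k\pi/(m+1)\bigr)$ via Corollary \ref{cor:hfam-forests}; note only that your identification of the two-digon $C_4$-digraphs' spectra in the ``if'' direction rests on the determinant computation you give later, so in a write-up that calculation should come first. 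In sum, the paper leans more heavily on interlacing and tabulated data, while your route trades some of that for a trace/positive-semidefiniteness argument and a short algebraic enumeration, making the proof more self-contained.
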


\begin{proof}
Let the $H$-eigenvalues of $X$ be $\lambda_1 \geq \cdots \geq \lambda_n$ and let $\Gamma = \Gamma(X)$. Let us assume that $\lambda_1 < \sqrt{3}$ and $\lambda_n > -\sqrt{3}$. Again, we consider $Y$, an induced subdigraph of $X$ on $3$ vertices and let $\mu_1 \geq \mu_2 \geq \mu_3$ be the $H$-eigenvalues of $Y$. Applying the interlacing theorem, we obtain that
\[
-\sqrt{3} < \mu_i <\sqrt{3}
\]
for $i=1,2,3$. Again, we consult Table \ref{tab:h3s} to see that $\Gamma(Y)$ is isomorphic to one of $P_3$, $K_2 \cup K_1$ or $E_3$. In other words, $\Gamma(Y)$ is acyclic.
Since the choice of $Y$ was arbitrary, the above holds for every induced subdigraph of $X$ on $3$ vertices. Then, $\Gamma$ does not contain a triangle.

If $\Gamma$ contains a vertex of degree at least $3$, then $\Gamma$ contains either an induced star on $4$ vertices or a triangle. We have already shown that there are no triangles, so $\Gamma$ must contain a star on $4$ vertices. It is easy to see that every digraph whose underlying graph is a $4$-star has $\sqrt{3}$ as an eigenvalue (see also Section \ref{sec:cospec}). By interlacing, this cannot happen in $X$. Thus, every vertex in $\Gamma$ has degree at most $2$.

The conclusion of the above is that the components of $\Gamma$ are paths and cycles. Suppose $W \subseteq V(X)$ induced a path of length $4$ in $\Gamma$. The spectral radius of $P_5$ is $\sqrt{3}$ and, since all digraphs with $P_5$ as its underlying graph are $H$-cospectral by Corollary \ref{cor:cutedge}, $W$ induces a subgraph of $X$ with maximum $H$-eigenvalue equal to $\sqrt{3}$, which is again impossible. Every cycle of length at least $6$ contains an induced path of length $4$ and every path of length at least $4$ contains an induced path of length $4$, so $\Gamma$ does not have any cycles of length greater than $5$ or paths of length greater than $3$.

We see from Table \ref{tab:c5s} that every digraph $Y$ such that $\Gamma(Y) \cong C_5$ has an eigenvalue strictly greater than $\sqrt{3}$. Also, we see from Table \ref{tab:h3s} that $\Gamma(Y)$ cannot be $C_3$. Thus we must have that each component of $X$ is either a path on at most four vertices or $\Gamma(Z) \cong C_4$. From Table \ref{tab:c4s}, we can see that $Z$ must be cospectral to $\widetilde{C}_4$. It is easy to see that there are three such digraphs. One is $\widetilde{C}_4$, the other two have two digons and they are strongly connected.

Conversely, a digraph all of whose components are as stated has spectral radius strictly smaller than $\sqrt{3}$. This completes the proof.
\end{proof}

It is interesting to consider for which values of $\alpha$ the number of weakly connected digraphs whose $H$-eigenvalues will all lie in the interval $(-\alpha, \alpha)$ or $[-\alpha, \alpha]$ will be finite. We see from Theorem \ref{thm:hevalinsqrt3} that there are only finitely many weakly connected digraphs with all $H$-eigenvalues in the interval $(-\sqrt{3}, \sqrt{3})$. It may be true that the same holds for every $\alpha$ with $0\leq \alpha <2$.  The directed paths show that there are infinitely many weakly connected digraphs whose $H$-eigenvalues will all lie in the interval $(-2, 2)$, thus $\alpha =2$ will not give the same conclusion.

\section{Examples}
\label{sec:hfam}

Computation on all isomorphism classes of digraphs of orders $2$, $3$, $4$, $5$, and $6$ were carried out using Sage open-source mathematical software system \cite{sage}, for both the adjacency matrix and the Hermitian adjacency matrix.  We include some data here to give an idea of how the Hermitian adjacency matrix behaves as compared to the adjacency matrix, on small digraphs. We refer to the set of all digraphs that are $H$-cospectral to a given digraph as an \emph{$H$-cospectral class.} A digraph is \emph{determined by its $H$-spectrum} if every digraph that is $H$-cospectral with it is also isomorphic to it.

\begin{table}[htdp]
\begin{center}
\begin{tabular}{|l|r|r|r|r|r|}
\hline
Order &	$2$ & $3$ & $4$ & $5$ & $6$ \\
\hline
Number of digraphs & $3$ & $16$ & $218$ & $9608$ & $1540944$ \\
\hline
Number of distinct characteristic polynomials  &$2$ & $6$ & $27$ &  $275$ & $10920$ \\
\hline
Number of $H$-cospectral classes such that: &&&&& \\
a) characteristic polynomial is irreducible over $\rats$ & $0$ & $0$ & $0$ & $0$ & $6 $\\
b) characteristic polynomial is square-free & $1$ & $3$ & $14$ & $214$ & $9980$ \\
\hline
Maximum size of a $H$-cospectral class	& $2 $ & $ 6 $ & $ 21 $ & $ 158 $ & $ 1338$ \\
\hline
Number of digraphs determined by $H$-spectrum &	$1 $ & $ 2	 $ & $ 3 $ & $ 5 $ & $ 16$ \\
\hline
Number of $H$-cospectral classes containing: &&&&& \\
a) no graphs & $0 $ & $  2 $ & $ 16$ & $ 242$ & $ 10769$ \\
b) only graphs & $1 $ & $ 1 $ & $ 1$ & $ 1 $ & $ 1$ \\
c) at least one graph and a digraph  &  $1 $ & $	3 $ & $	10 $ & $	32 $ & $150$ \\

\hline
\end{tabular}
\end{center}
\caption{The $H$-spectra of small digraphs.} \label{tab:Hsmalldigrs}
\end{table}

\begin{table}[htdp]
\begin{center}
\begin{tabular}{|l|r|r|r|r|r|}
\hline
Order &	$2 $ & $ 3 $ & $ 4 $ & $ 5 $ & $ 6$ \\
\hline
Number of digraphs & $3 $ & $ 16 $ & $ 218 $ & $ 9608 $ & $ 1540944$ \\
\hline
Number of distinct characteristic polynomials  & $2 $ & $ 7 $ & $ 46 $ & $  718 $ & $ 35239 $\\
\hline
Number of $A$-cospectral classes such that: &&&&& \\
a) characteristic polynomial is irreducible over $\rats$  & $0$ & $ 1 $ & $ 12$ & $277 $ & $19392 $\\
b) characteristic polynomial is square-free & $1$ & $5$ & $36$ &  $625$ & $33146$ \\
\hline
Maximum size of a $A$-cospectral class	& $2 $ & $ 6 $ & $ 42 $ & $ 592 $ & $ 15842$ \\
\hline
Number of digraphs determined by spectrum &	$1 $ & $ 5	 $ & $ 23 $ & $ 166  $ & $ 2317$  \\
\hline
Number of $A$-cospectral classes containing: &&&&& \\
a) no graphs & $0 $ & $  3 $ & $ 35 $ & $ 685 $ & $ 35088$ \\
b) only graphs & $1 $ & $ 2 $ & $ 5$ & $ 15$ & $ 69$ \\
c) at least one graph and a digraph  & $1 $ & $	2 $ & $	6 $ & $	18 $ & $82$ \\
\hline
\end{tabular}
\end{center}
\caption{The adjacency matrix spectra small digraphs. \label{tab:Asmalldigrs} }
\end{table}

Recall that $D_n$ denotes the directed cycle on $n$ vertices. The following is well-known.

\begin{lemma}
\label{lem:hdn-evs}
The $H$-eigenvalues of $D_n$ are
$2 \sin \frac{2\pi k}{n}$, $k = 0,\ldots, n - 1$.
\end{lemma}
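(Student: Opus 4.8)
The plan is to recognize that $H = H(D_n)$ is a circulant matrix and diagonalize it in the Fourier basis. Since a directed cycle has no digons, $D_n$ is an oriented graph, so each arc $v_jv_{j+1}$ contributes the entry $i$ and its reverse $-i$. Ordering the vertices $v_0,\dots,v_{n-1}$ cyclically, the Hermitian adjacency matrix satisfies $H_{j,j+1}=i$ and $H_{j,j-1}=-i$ (indices read modulo $n$), with all other entries zero. Thus $H$ is the circulant matrix whose first row has $i$ in position $1$ and $-i$ in position $n-1$; equivalently $H = i(S-S^{-1})$, where $S$ is the cyclic shift permutation matrix with $S_{j,j+1}=1$ and $S^{-1}=S^T$.

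Next I would invoke the standard fact that all circulants are simultaneously diagonalized by the Fourier vectors. Let $\om = e^{2\pi i/n}$ and, for $k=0,\dots,n-1$, let $f_k=(1,\om^k,\om^{2k},\dots,\om^{(n-1)k})^T$. These $n$ vectors form an orthogonal basis and satisfy $Sf_k=\om^k f_k$ and $S^{-1}f_k=\om^{-k}f_k$, hence they are eigenvectors of $H$. The circulant eigenvalue formula then gives the eigenvalue attached to $f_k$ directly as
\[
\la_k = i\om^k - i\om^{-k} = i(\om^k-\om^{-k}).
\]

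Finally I would simplify using $\om^k-\om^{-k}=2i\sin\frac{2\pi k}{n}$, so that $\la_k = i\cdot 2i\sin\frac{2\pi k}{n} = -2\sin\frac{2\pi k}{n}$. Because $f_0,\dots,f_{n-1}$ form a basis of $\cx^n$, these are precisely all $n$ eigenvalues of $H$. To match the stated form, reindex $k\mapsto n-k$ and use that $\sin$ is odd together with $\frac{2\pi(n-k)}{n}=2\pi-\frac{2\pi k}{n}$, which yields $-2\sin\frac{2\pi k}{n}=2\sin\frac{2\pi(n-k)}{n}$; hence as multisets $\{-2\sin\frac{2\pi k}{n}:k\}=\{2\sin\frac{2\pi k}{n}:k\}$, which is exactly the claimed spectrum. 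The argument is entirely routine and I do not anticipate any genuine obstacle; the only points needing a moment's care are fixing the sign/index convention for the shift $S$ (an opposite convention merely swaps $i$ and $-i$, flipping the overall sign, which washes out in the multiset) and noting the final multiset identity converting $-2\sin$ into $2\sin$.
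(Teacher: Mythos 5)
Your proof is correct. Note that the paper itself offers no proof of this lemma---it is stated as ``well-known''---so there is nothing to deviate from; your circulant-diagonalization argument is exactly the standard folklore proof being alluded to. Writing $H(D_n)=i(S-S^{-1})$ with $S$ the cyclic shift, checking $Sf_k=\om^k f_k$ on the Fourier basis, and simplifying $i(\om^k-\om^{-k})=-2\sin\frac{2\pi k}{n}$ are all correct, and your final reindexing $k\mapsto n-k$ to turn the multiset $\bigl\{-2\sin\frac{2\pi k}{n}\bigr\}$ into $\bigl\{2\sin\frac{2\pi k}{n}\bigr\}$ is valid (the fixed points $k=0$, and $k=n/2$ for even $n$, contribute $0$, so the involution argument goes through). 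Your route is also consistent in spirit with how the paper handles the neighbouring cases: for $\widetilde{C}_n$ and the transitive tournaments it invokes the eigenvalue formula for \emph{skew} circulants (Theorem \ref{thm:skewevals}), whereas $D_n$ is an honest circulant and so admits the plain Fourier diagonalization you used. The only pedantic caveat is that your remark ``a directed cycle has no digons'' requires $n\ge 3$ (for $n=2$ the two arcs form a digon and the formula fails), but that restriction is implicit in the lemma itself, not a flaw in your argument.
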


Next we determine the eigenvalues of $\widetilde{C}_{n}$. Recall that this digraph is obtained from $D_n$ by reversing one arc and that it is cospectral with $D_n$ if $n$ is odd. In order to find the $H$-eigenvalues of $\widetilde{C}_n$, we will use known theorems about certain types of circulant matrices, which we will use again when discussing the $H$-eigenvalues of the transitive tournament.

Circulant matrices have been studied extensively, see \cite{HoJo13} for more information. A \emph{skew circulant matrix} is a circulant with a change in sign to all entries below the main diagonal. We will follow the notation of \cite{Da79} and define for $\Za = \pmat{a_0, \ldots, a_{n-1}}$ with real entries, the skew circulant matrix of $\Za$ as:
\[
\scirc(\Za) =
\pmat{a_0 & a_1 & \ldots & a_{n-1} \\
-a_{n-1} & a_0 & \ddots &  \vdots\\
\vdots & \ddots & \ddots &  a_1 \\
-a_1 & \ldots & -a_{n-1} & a_0}.
\]
The eigenvalues of a skew circulant matrix $\scirc(\Za)$ are easy to find, see, e.g. \cite[Section 3.2.1]{Da79}.

\begin{theorem}
\label{thm:skewevals}
The eigenvalues of $\scirc(\Za)$ are $\mu_j(\Za) = \sum_{k = 0}^{n-1} a_k \sigma^{(2j + 1)k},$ where $\sigma = e^{\frac{ \pi i}{n}}$, for $j = 0,1,\ldots, n-1$.
\end{theorem}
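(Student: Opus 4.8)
The plan is to mimic the standard diagonalization of ordinary circulants: realize every skew circulant as a polynomial in one fixed ``skew shift'' matrix, diagonalize that single matrix by hand, and then read off the spectrum. This works because all skew circulants of a fixed size will then share a common eigenbasis.

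First I would introduce the basic skew shift $Q = \scirc(0,1,0,\dots,0)$, the matrix with $Q_{p,p+1}=1$ for $1\le p\le n-1$ and $Q_{n,1}=-1$ (all other entries $0$). A direct inspection of the defining array of $\scirc$ shows that $Q^k = \scirc(0,\dots,0,1,0,\dots,0)$ with the single $1$ in position $k$, and hence that
\[
   \scirc(\Za) = \sum_{k=0}^{n-1} a_k\, Q^k ,
\]
with $Q^0 = I$. Thus the skew circulants are exactly the polynomials in $Q$, so it suffices to diagonalize $Q$.

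Next I would solve the eigenvalue equation $Q\Zv = \la \Zv$ directly. Reading off the rows of $Q$, this system becomes $v_{p+1} = \la v_p$ for $1 \le p \le n-1$, together with the wrap-around relation $-v_1 = \la v_n$. The first relations give $v_p = \la^{p-1} v_1$, and substituting into the last one yields $\la^n = -1$. Hence the eigenvalues of $Q$ are precisely the $n$ distinct $n$th roots of $-1$, namely $\la = e^{i\pi(2j+1)/n} = \sg^{\,2j+1}$ for $j = 0,1,\dots,n-1$, with corresponding eigenvector $\Zv^{(j)}$ whose $p$th entry is $\sg^{(2j+1)(p-1)}$.

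Finally, because $\Zv^{(j)}$ is an eigenvector of $Q$ with eigenvalue $\sg^{\,2j+1}$, it is also an eigenvector of each power $Q^k$ with eigenvalue $\sg^{(2j+1)k}$, and therefore
\[
   \scirc(\Za)\,\Zv^{(j)} = \sum_{k=0}^{n-1} a_k\, \sg^{(2j+1)k}\,\Zv^{(j)} = \mu_j(\Za)\,\Zv^{(j)} .
\]
Since the nodes $\sg^{\,2j+1}$ ($j=0,\dots,n-1$) are distinct, the vectors $\Zv^{(j)}$ form a Vandermonde basis of $\cx^n$, so these are all $n$ eigenvalues of $\scirc(\Za)$ and the claim follows. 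The one point requiring care---and the only place where the argument differs from the ordinary circulant case---is the sign bookkeeping at the wrap-around entry $Q_{n,1}=-1$: it is precisely this $-1$ that converts the relation $\la^n = 1$ into $\la^n = -1$, shifting the spectrum from the $n$th roots of unity $e^{2\pi i j/n}$ to the ``odd'' roots $\sg^{\,2j+1}$.
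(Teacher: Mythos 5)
Your proof is correct, and it is worth pointing out that the paper itself offers no proof of this statement at all: the result is imported as known, with a citation to Davis's monograph on circulant matrices \cite[Section 3.2.1]{Da79}, so your argument supplies a proof the paper omits. Your route is the natural skew analogue of the classical diagonalization of circulants: every skew circulant is written as the polynomial $\sum_{k=0}^{n-1} a_k Q^k$ in the skew shift $Q$, the wrap-around sign $Q_{n,1}=-1$ turns the constraint $\lambda^n=1$ into $\lambda^n=-1$, and the common eigenbasis consists of the Vandermonde vectors at the odd roots $\sigma^{2j+1}$. All the steps check out: $Q^k$ is indeed the skew circulant with a single $1$ in position $k$ (so that $Q^n=-I$), the recursion $v_{p+1}=\lambda v_p$ together with $-v_1=\lambda v_n$ is read off correctly from the rows of $Q$, the $n$ roots of $\lambda^n=-1$ are distinct so the vectors $\Zv^{(j)}$ form a basis of $\cx^n$, and therefore the multiset $\{\mu_j(\Za)\}_{j=0}^{n-1}$ exhausts the spectrum with the correct multiplicities. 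An equally standard alternative, closer to how Davis organizes the material, is to conjugate by the diagonal matrix $D=\diag\bigl(1,\sigma,\sigma^2,\ldots,\sigma^{n-1}\bigr)$: a short computation shows that $D^{-1}\scirc(\Za)D$ is the ordinary circulant with symbol $b_k=\sigma^k a_k$, whose eigenvalues $\sum_{k} b_k\,\sigma^{2jk}=\sum_{k} a_k\,\sigma^{(2j+1)k}$ are classical. That version buys brevity if the circulant spectrum is taken as known; your version is fully self-contained and makes the origin of the shift from the roots of unity to the odd roots of $-1$ completely transparent.
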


In particular, it will be useful to simplify this statement for skew-symmetric, skew circulant matrices.

\begin{corollary}
\label{cor:skewevals}
Suppose that $a_k = a_{n-k}$ for $k = 1,\ldots, n-1$. If $n$ is odd, then $\scirc(\Za)$  has eigenvalues
\[
\nu_j(\Za) = a_0 + 2i\, \sum_{k=1}^{\frac{n-1}{2}} a_k \sin \tfrac{k(2j + 1)\pi}{n}\, , \quad j = 0,1,\ldots, n-1.
\]
If $n$ is even, then $\scirc(\Za)$ has eigenvalues
\[
\nu_j(\Za) = a_0 + (-1)^j a_{\frac{n}{2}} i + 2i\, \sum_{k=1}^{\lfloor \frac{n-1}{2}\rfloor} a_k \sin \tfrac{k(2j + 1)\pi}{n}\, , \quad j = 0,1,\ldots, n-1.
\]
\end{corollary}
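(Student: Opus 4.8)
The plan is to start directly from the explicit eigenvalue formula of Theorem \ref{thm:skewevals}, namely $\mu_j(\Za) = \sum_{k=0}^{n-1} a_k \sigma^{(2j+1)k}$ with $\sigma = e^{\pi i/n}$, and to exploit the symmetry $a_k = a_{n-k}$ by pairing the $k$-th and $(n-k)$-th summands. Writing $\theta_j = (2j+1)\pi/n$, so that $\sigma^{(2j+1)k} = e^{ik\theta_j}$, the single observation that drives everything is that $\sigma^{(2j+1)n} = e^{i(2j+1)\pi} = -1$, whence $e^{i(n-k)\theta_j} = -e^{-ik\theta_j}$.

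First I would split off the $k=0$ term, which contributes $a_0$ in all cases, and group the remaining indices into conjugate pairs $\{k, n-k\}$. Using $a_k = a_{n-k}$ together with the identity above, each such pair collapses to a purely imaginary quantity:
\[
a_k e^{ik\theta_j} + a_{n-k} e^{i(n-k)\theta_j} = a_k\bigl(e^{ik\theta_j} - e^{-ik\theta_j}\bigr) = 2i\, a_k \sin(k\theta_j).
\]
When $n$ is odd every index $k \in \{1, \ldots, n-1\}$ is matched with a distinct partner $n-k$, so the pairs are exactly $k = 1, \ldots, \tfrac{n-1}{2}$; summing them yields $\mu_j = a_0 + 2i \sum_{k=1}^{(n-1)/2} a_k \sin\tfrac{k(2j+1)\pi}{n}$, which is the odd-$n$ formula for $\nu_j(\Za)$.

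The one genuinely separate step is the even case, where $k = n/2$ is its own partner and must be handled on its own. Here I would evaluate the middle summand directly: $\sigma^{(2j+1)(n/2)} = e^{i(2j+1)\pi/2} = (-1)^j i$, so it contributes $(-1)^j a_{n/2} i$, while the remaining indices $k = 1, \ldots, \tfrac{n}{2} - 1 = \lfloor \tfrac{n-1}{2}\rfloor$ pair off exactly as before. Assembling the three pieces — the diagonal term $a_0$, the isolated middle term, and the paired sum — gives the stated even-$n$ formula.

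I do not expect a serious obstacle, since the result is essentially a consequence of Theorem \ref{thm:skewevals} and Euler's formula. The only point requiring care is the parity bookkeeping: verifying that the sign $(-1)^j$ on the middle term is correct, that the upper summation limit is exactly $\lfloor \tfrac{n-1}{2}\rfloor$ in both parities, and that the pairing $k \leftrightarrow n-k$ neither omits nor double-counts any index. Confirming that $e^{i(2j+1)\pi/2} = (-1)^j i$ and that $\sin(k\theta_j) = \sin\tfrac{k(2j+1)\pi}{n}$ then completes the argument.
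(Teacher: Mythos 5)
Your proposal is correct and follows essentially the same route as the paper's own proof: both start from Theorem \ref{thm:skewevals}, pair the $k$ and $n-k$ summands using $a_k = a_{n-k}$ together with $\sigma^{(2j+1)n} = -1$ to collapse each pair to $2i\,a_k \sin\tfrac{k(2j+1)\pi}{n}$, and handle the self-paired index $k = n/2$ separately in the even case via $\sigma^{(2j+1)n/2} = (-1)^j i$. The parity bookkeeping you flag (upper limits $\tfrac{n-1}{2}$ versus $\lfloor\tfrac{n-1}{2}\rfloor$ and the sign $(-1)^j$) checks out exactly as in the paper.
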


\begin{proof}
Let $\sigma = e^{\frac{ \pi i}{n}}$. If  $a_j = a_{n-j}$ for $j = 1,\ldots, n-1$, it is clear from the definition that $\scirc(\Za)$ is skew-symmetric. Observe that $\sigma^n = -1$. For any $j \in \{0, \ldots, n-1\}$ and $k \in \{1, \ldots, n-1\}$, consider the contribution of terms with $a_k$ and $a_{n-k}$ in $\mu_j(\Za)$ of Theorem \ref{thm:skewevals}:
\[
\begin{split}
a_k \sigma^{(2j + 1)k} + a_{n-k} \sigma^{(2j + 1)(n-k)}
&= a_k \left( \sigma^{(2j + 1)k} + \sigma^{(2j + 1)n} \sigma^{(2j + 1)(-k)} \right) \\
&= a_k \left( \sigma^{(2j + 1)k} - (\sigma^{-1})^{(2j + 1)k} \right) \\
&= a_k \left( 2 i \imag(\sigma^{(2j + 1)k} )  \right) \\
&= 2i a_k \sin \tfrac{(2j + 1)k\pi}{n}.
\end{split}
\]
If $n$ is odd, then we are done. If $n = 2m$, then we also have the contribution  $a_m\sigma^{(2j + 1)m} = a_m(-1)^j\, i$ that gives the result for the even case.
\end{proof}

We will use Corollary \ref{cor:skewevals} to find the eigenvalues of $\widetilde{C}_n$.

\begin{proposition}
\label{prop:hctilden-evs}
The eigenvalues of $\widetilde{C}_n$ $(n\ge3)$ are $2 \sin \frac{(2j + 1)\pi}{n}$, $j = 0,\ldots, n-1$.
\end{proposition}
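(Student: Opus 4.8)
The plan is to recognize $H(\widetilde{C}_n)$ as $i$ times a real skew-symmetric skew circulant matrix and then read off its eigenvalues from Corollary~\ref{cor:skewevals}. First I would fix a convenient labeling. Writing $D_n$ as the directed cycle $v_0\to v_1\to\cdots\to v_{n-1}\to v_0$ and using that all choices of the reversed arc give isomorphic digraphs (by the rotational symmetry of $D_n$), I may assume that $\widetilde{C}_n$ is obtained by reversing the closing arc $v_{n-1}v_0$ into $v_0v_{n-1}$. Then $H=H(\widetilde{C}_n)$ has $i$ on the superdiagonal and in the top-right corner $(0,n-1)$, and $-i$ on the subdiagonal and in the bottom-left corner $(n-1,0)$, with all remaining entries zero.

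The key step is to observe that $H=i\,\scirc(\Za)$, where $\Za=(0,1,0,\dots,0,1)$ has $a_1=a_{n-1}=1$ and all other entries zero. This is checked by extracting the common factor $i$ and matching against the skew circulant convention in which entries below the diagonal are negated on wrap-around: the superdiagonal and the $(0,n-1)$ entry come from $a_1$ and $a_{n-1}$, whereas the subdiagonal and the $(n-1,0)$ entry acquire the extra minus sign, so $-iH$ reproduces exactly $\scirc(\Za)$. Since $a_k=a_{n-k}$, the matrix $\scirc(\Za)$ is skew-symmetric and Corollary~\ref{cor:skewevals} applies. In its eigenvalue formula only the $k=1$ term survives (and, for even $n$, the $a_{n/2}$ term vanishes because $n\ge3$), giving $\nu_j=2i\sin\frac{(2j+1)\pi}{n}$ for $j=0,\dots,n-1$. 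Multiplying by the scalar $i$, the eigenvalues of $H$ are $i\nu_j=-2\sin\frac{(2j+1)\pi}{n}$.

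It then remains to reconcile the sign with the claimed list $2\sin\frac{(2j+1)\pi}{n}$. Replacing $j$ by $n-1-j$ sends the angle $\frac{(2j+1)\pi}{n}$ to $2\pi-\frac{(2j+1)\pi}{n}$ and hence negates the sine, so the two multisets $\{-2\sin\frac{(2j+1)\pi}{n}\}$ and $\{2\sin\frac{(2j+1)\pi}{n}\}$ coincide; this is also forced a priori by Theorem~\ref{thm:h-ori}, as $\widetilde{C}_n$ is oriented and therefore has spectrum symmetric about $0$. I expect the only delicate point to be the bookkeeping that identifies $H$ with $i\,\scirc(\Za)$: one must carefully align the orientation convention (an arc $uv$ contributing the entry $i$) with the below-diagonal sign flip of the skew circulant, and note that it is precisely the reversal of the closing arc --- rather than an interior arc --- that turns the matrix into an honest skew circulant. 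Everything past that identification is a direct substitution into Corollary~\ref{cor:skewevals}.
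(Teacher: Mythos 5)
Your proposal is correct and follows essentially the same route as the paper: identify $H(\widetilde{C}_n)$ with $i\,\scirc(\Za)$ for $\Za=(0,1,0,\dots,0,1)$, apply Corollary \ref{cor:skewevals} (where only the $k=1$ term survives and $a_{n/2}=0$ since $n\ge 3$), and then observe that the resulting values $-2\sin\frac{(2j+1)\pi}{n}$ form the same multiset as $2\sin\frac{(2j+1)\pi}{n}$ via $j\mapsto n-1-j$. In fact you are somewhat more careful than the paper, which leaves both the choice of reversed arc giving an exact skew circulant and the final sign reconciliation as implicit ("easily seen") steps.
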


\begin{proof}
Observe that $H(\widetilde{C}_n) = i\scirc(\Za)$ where $\Za = \pmat{a_0, \ldots, a_{n-1}}$ and $a_1 = a_{n-1} = 1$ and $a_j = 0$ for $j \notin \{1, n-1\}$. Then, by Corollary \ref{cor:skewevals}, the eigenvalues of $H(\widetilde{C}_n)$ are $i \nu_j(\Za)$ for $j = 0,\ldots, n-1$. Let $\sigma = e^{\frac{ \pi i}{n}}$ as before. Since $a_{\frac{n}{2}} = 0$, we obtain that $\nu_j(\Za) = 2i \sin \frac{(2j + 1)\pi}{n}$ for $j \in {0,\ldots, n-1}$.
Then, the eigenvalues of $H(\widetilde{C}_n)$ are $-2 \sin \frac{(2j + 1)\pi}{n}$ for $j = 0,\ldots, n-1$, which is easily seen to be the same as claimed.
\end{proof}

\subsection*{Transitive tournaments}\label{sec:hfam-transtmt}

The transitive tournaments are an important class of digraphs to study and the spectra of their skew-symmetric adjacency matrices have been studied as skew circulants and Toeplitz matrices in \cite{GrKiSh93, KSU03}.

Let $T_n$ denote the transitive tournament on $n$ vertices. See Figure \ref{fig:t5} which shows $T_5$.
Let $H_n =H(T_n)$ denote its Hermitian adjacency matrix.

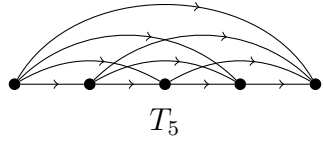
\begin{figure}[ht!]
\centering
\begin{tikzpicture}
	\begin{pgfonlayer}{nodelayer}
		\node [style=n] (0) at (0, -2) {$T_5$};
		\node [style=new] (1) at (-2, -1.5) {};
		\node [style=new] (2) at (0, -1.5) {};
		\node [style=new] (3) at (2, -1.5) {};
		\node [style=new] (4) at (1, -1.5) {};
		\node [style=new] (5) at (-1, -1.5) {};
	\end{pgfonlayer}
	\begin{pgfonlayer}{edgelayer}
		\draw [directed, bend left] (1) to (2);
		\draw [directed, bend left] (2) to (3);
		\draw [directed, bend left] (5) to (4);
		\draw [directed, bend left=45] (1) to (4);
		\draw [directed, bend left=45] (5) to (3);
		\draw [directed, bend left=60] (1) to (3);
		\draw [directed] (1) to (5);
		\draw [directed] (5) to (2);
		\draw [directed] (2) to (4);
		\draw [directed] (4) to (3);
	\end{pgfonlayer}
\end{tikzpicture}
\caption{Transitive tournament $T_5$ on $5$ vertices.  \label{fig:t5}}
\end{figure}

\begin{theorem}
The characteristic polynomial of the transitive tournament $T_n$ is
\[ \phi(T_n,t) =  \sum_{j=0}^{\lfloor \frac{n}{2} \rfloor } (-1)^j {n \choose 2j}  t^{n-2j}  = \frac{1}{2} (t+i)^{n}  +  \frac{1}{2} (t- i)^{n} .
\]
The $H$-eigenvalues of $T_n$ are
\[
    \sum_{k=1}^{\frac{n-1}{2}} 2 \sin \left( \frac{k(2j + 1)\pi}{n} \right)
\]
for $j = 0,\ldots, n-1$, when $n$ is odd, and
\[
    (-1)^{j}  + \sum_{k=1}^{\frac{n-2}{2}}  2 \sin \left( \frac{k(2j + 1)\pi}{n} \right)
\]
for $j = 0,\ldots, n-1$, when $n$ is even.
\end{theorem}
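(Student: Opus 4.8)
The plan is to treat the two assertions separately: first evaluate the determinant $\det(tI-H_n)$ in closed form to get the characteristic polynomial, and then read the eigenvalues off the skew circulant machinery of Corollary \ref{cor:skewevals}, mimicking the proof of Proposition \ref{prop:hctilden-evs}.

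Order the vertices so that every arc points from the smaller to the larger index. Then $H_n = H(T_n)$ is the $n\times n$ matrix with $0$ on the diagonal, the entry $i$ in every position strictly above the diagonal, and $-i$ in every position strictly below; hence $tI-H_n$ is a matrix of the special form with constant diagonal $d=t$, constant entry $x=-i$ above the diagonal and constant entry $y=i$ below it. For any such matrix one has the determinant identity
\[
  \det = \frac{x(d-y)^n - y(d-x)^n}{x-y},
\]
which I would verify by induction on $n$: differencing consecutive rows (an operation of determinant $1$) and expanding the result along the last column yields the recurrence $P_n = (d-x)P_{n-1} + x(d-y)^{n-1}$, and the displayed formula satisfies it with base case $P_1=d$. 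Substituting $d=t$, $x=-i$, $y=i$ (so $x-y=-2i$) gives
\[
  \phi(T_n,t) = \frac{(-i)(t-i)^n - (i)(t+i)^n}{-2i} = \tfrac12(t+i)^n + \tfrac12(t-i)^n ,
\]
and the binomial theorem cancels the odd-degree terms (since $i^{2j}=(-1)^j$), leaving $\sum_{j}(-1)^j\binom{n}{2j}t^{n-2j}$; this establishes both expressions in the first display.

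For the eigenvalues I would set $\Za = \pmat{0,1,\ldots,1}$, so that $\scirc(\Za)$ has $0$ on the diagonal, $1$ above and $-1$ below, whence $H_n = i\,\scirc(\Za)$. Since $a_k = a_{n-k}=1$ for all $k$, Corollary \ref{cor:skewevals} applies directly and yields the eigenvalues $\nu_j(\Za)$ of $\scirc(\Za)$; multiplying by $i$ gives the eigenvalues of $H_n$. For odd $n$ this produces $-\sum_{k=1}^{(n-1)/2}2\sin\frac{k(2j+1)\pi}{n}$, and for even $n$ it produces $-(-1)^j - \sum_{k=1}^{(n-2)/2}2\sin\frac{k(2j+1)\pi}{n}$, for $j=0,\ldots,n-1$.

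Finally, exactly as in Proposition \ref{prop:hctilden-evs}, the overall sign is absorbed by the substitution $j\mapsto n-1-j$: this sends $\sin\frac{k(2j+1)\pi}{n}$ to its negative, because $\frac{k(2(n-1-j)+1)\pi}{n} = 2k\pi - \frac{k(2j+1)\pi}{n}$, and for even $n$ it also sends $(-1)^j$ to $-(-1)^j$; hence the multiset of values computed above coincides with the multiset stated in the theorem. The only substantive computation is the determinant evaluation, so I expect the main obstacle to be organizing the row operations cleanly enough to extract the recurrence $P_n = (d-x)P_{n-1}+x(d-y)^{n-1}$; once that is in hand, both parts of the theorem are routine.
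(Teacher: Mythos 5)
Your proposal is correct, and it in fact proves more than the paper's own proof does. For the eigenvalue half you follow exactly the paper's route: write $H_n = i\,\scirc(\Za)$ with $\Za=\pmat{0,1,\dots,1}$, invoke Corollary \ref{cor:skewevals}, and multiply by $i$; the one place you are more careful is the overall sign, which you absorb by the reindexing $j\mapsto n-1-j$ — the paper (here, and likewise in Proposition \ref{prop:hctilden-evs}) leaves that step as ``easily seen to be the same.'' The characteristic-polynomial half is where you genuinely diverge: the paper's proof never establishes the first display at all — it derives only the eigenvalue list, and passing from that list to $\prod_j(t-\lambda_j)=\tfrac12(t+i)^n+\tfrac12(t-i)^n$ would need a further argument (e.g.\ that each eigenvalue satisfies $(t-i)^n=-(t+i)^n$ because $(\lambda-i)/(\lambda+i)$ is a $2n$-th root of $-1$, together with distinctness of the $n$ eigenvalues). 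Your closed-form determinant identity for matrices with constant diagonal $d$, constant strictly-upper part $x$ and strictly-lower part $y$ is a clean, self-contained way to supply that missing piece. One small repair to your derivation: differencing consecutive rows and then expanding along the \emph{last} column does not yield your recurrence cleanly; expanding the differenced matrix along the \emph{first} column gives $P_n=(d-y)P_{n-1}+y(d-x)^{n-1}$, which is your recurrence with $x$ and $y$ swapped and suffices by transposition, or, cleanest of all, split the first row as $(d-x,0,\dots,0)+(x,x,\dots,x)$ and use multilinearity of the determinant to obtain $P_n=(d-x)P_{n-1}+x(d-y)^{n-1}$ in one step. With either recurrence your induction closes, and both displays of the theorem follow.
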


\begin{proof}
Observe that $H_n = i\scirc(\Za)$ where $\Za = \pmat{a_0, \ldots, a_{n-1}}$ and $a_1 = \ldots = a_{n-1} = 1$ and $a_0 = 0$. We see that $\scirc(\Za)$ is a skew-symmetric skew circulant and so we may apply Corollary \ref{cor:skewevals} to obtain that eigenvalues of $\scirc(\Za)$ are
\[
\nu_j(\Za) =
\begin{cases} a_0 + \sum_{k=1}^{\frac{n-1}{2}} a_k 2i \sin \left( \frac{k(2j + 1)\pi}{n} \right)  & \text{if } n \text{ is odd} \\
 a_0 + (-1)^j a_{\frac{n}{2}} i + \sum_{k=1}^{\lfloor \frac{n-1}{2}\rfloor} a_k 2i \sin \left( \frac{k(2j + 1)\pi}{n} \right) & \text{if } n \text{ is even}
\end{cases}
\]
for $j = 0,\ldots, n-1$. As the eigenvalues of $H_n$ are $i\nu_j$ for $j =0, \ldots, n-1$, we obtain the expressions for the $H$-eigenvalues as in the theorem.
\end{proof}

\bibliographystyle{plain}
\bibliography{spectra}

\end{document}